\newcommand{\RR}{\mathbb{R}}
\DeclareMathOperator{\ddiv}{div}
\newcommand{\Nedelec}{N\'{e}d\'{e}lec }
\newcommand{\vertiii}[1]{{\left\vert\kern-0.25ex\left\vert\kern-0.25ex\left\vert #1
        \right\vert\kern-0.25ex\right\vert\kern-0.25ex\right\vert}}
\newcommand{\edit}[1]{\textcolor{black}{{#1}}}
\newcommand{\AuE}{A_{\bm u}^E}
\newcommand{\BuE}{B_{\bm u}^E}
\newcommand{\ApE}{A_p^E}
\newcommand{\ApEs}{A_p^{E*}}
\newtheorem*{remark}{Remark}
\newcommand{\TheTitle}{%
  Robust Preconditioners for a New Stabilized Discretization of the Poroelastic Equations
}
\newcommand{\TheShortTitle}{%
    Robust Preconditioners for Poroelastic Equations
  %\TheTitle
}
\newcommand{\TheName}{%
  P. B. Ohm,
  J. H. Adler,
  F. J. Gaspar,
  X. Hu,
  C. Rodrigo,
  L. T. Zikatanov
}
\newcommand{\TheFunding}{%
  \textbf{Funding:} The work of Adler, Hu, and Ohm is partially supported by the National Science Foundation under grant DMS-1620063\@, and the work of Zikatanov is supported in part by NSF DMS-1720114 and DMS-1819157.
  The work of Gaspar and Rodrigo is supported in part by the Spanish project FEDER/MCYT MTM2016-75139-R and the Diputaci\'on General de Arag\'on (Grupo de referencia APEDIF, ref. E24\_17R).
}
\author{
	J. H. Adler\thanks{Department of Mathematics, Tufts University, Medford, MA 02155
		(peter.ohm@tufts.edu, james.adler@tufts.edu, \hbox{xiaozhe.hu@tufts.edu}).}
	\and
	%F. J. Gaspar\thanks{CWI, Centrum Wiskunde \& Informatica, Science Park 123, 1090 Amsterdam, The Netherlands (F.J.Gaspar@cwi.nl).}
	F. J. Gaspar\thanks{Departamento de Matem\'{a}tica Aplicada, IUMA, Universidad de Zaragoza, Zaragoza, Spain (fjgaspar@unizar.es, carmenr@unizar.es).}
	\and
	X. Hu\footnotemark[2]
	\and
	P. Ohm\footnotemark[2]
       \and
	%C. Rodrigo\thanks{Departamento de Matem\'{a}tica Aplicada, IUMA, Universidad de Zaragoza, Zaragoza, Spain (carmenr@unizar.es).}
	C. Rodrigo\footnotemark[3]
	\and
	L. T. Zikatanov\thanks{Department of Mathematics, The Pennsylvania State University, University Park, PA 16802 (\hbox{ludmil@psu.edu}).}
}
\title{{\TheTitle}\thanks{\TheFunding}}
\headers{\TheShortTitle}{\TheName}
\begin{document}

\maketitle

%\begin{center}
%In collaboration with:
%  {\TheCollaborators}
%\end{center}
\vspace{1cm}
% ---------------------------------------------
% ---------------------------------------------

\begin{abstract}
In this paper, we present block preconditioners for a stabilized
discretization of the poroelastic equations developed in
\cite{Rodrigo2017}. The discretization is proved to be well-posed with
respect to the physical and discretization parameters, and thus
provides a framework to develop preconditioners that are robust with
respect to such parameters as well. We construct both
  norm-equivalent (diagonal) and field-of-value-equivalent
  (triangular) preconditioners for both the stabilized discretization
  and a perturbation of the stabilized discretization that leads to a
  smaller overall problem after static condensation. Numerical tests for both two- and
  three-dimensional problems confirm the robustness of the block preconditioners with respect to the physical and discretization parameters.

\end{abstract}

\begin{keywords}
  % 7. Keywords that describe the paper
  Poroelasticity, Stable finite elements, Block preconditioners, Multigrid
\end{keywords}

\section{Introduction}\label{sec:intro}

In this work, we study the quasi-static Biot model for soil consolidation, where we assume a porous medium to be linearly elastic, homogeneous, isotropic, and saturated by an incompressible Newtonian fluid. According to Biot's theory \cite{biot1}, the consolidation process satisfies the following system of partial differential equations (PDEs):
\begin{eqnarray*}
\mbox{\rm equilibrium equation:}\quad & & -{\rm div} \, {\boldsymbol
\sigma}' +
\alpha \nabla \, p = \rho {\bm g}, \quad {\rm in} \, \Omega, \label{eq11} \\
\mbox{\rm constitutive equation:}\quad & & \bm{\sigma}' = 2\mu {\boldsymbol \varepsilon}(\bm{u})  + \lambda\ddiv(\bm{u}) {\bm I}, \quad {\rm in} \, \Omega,
\label{eq12} \\
\mbox{\rm compatibility condition:}\quad & & {\boldsymbol \varepsilon}({\bm u}) = \frac{1}{2}(\nabla {\bm u} + \nabla
{\bm u}^t), \quad {\rm in} \, \Omega,
\label{eq13} \\
\mbox{\rm Darcy's law:}\quad & & {\bm w} = - \displaystyle \frac{1}{\mu_f}{\bm K} ( \nabla
p - \rho_f {\bm g}), \quad {\rm in} \, \Omega,
\label{eq14} \\
\mbox{\rm continuity equation:}\quad & & \displaystyle \frac{\partial}{\partial t}\left( \frac{1}{M}p +
\alpha \ {\ddiv} \, {\bm u}\right)  + {\ddiv} \, {\bm w} = f, \quad {\rm in} \, \Omega,
\label{eq15}
\end{eqnarray*}
where $\lambda$ and $\mu$ are the Lam\'e coefficients, $\alpha$ is the Biot-Willis constant, $M$ is the bulk modulus, $\bm K$ is the absolute permeability tensor of the porous medium, $\mu_f$ is the viscosity of the fluid, ${\bm I}$ is the identity tensor, ${\bm u}$ is the displacement vector, $p$ is the pore pressure, ${\bm \sigma'}$ and ${\bm \epsilon}$ are the effective stress and strain tensors for the porous medium, and ${\bm w}$ is the percolation velocity, or Darcy's velocity, of the fluid relative to the soil. The right-hand term ${\bm g}$ is the density of applied body forces and the source term $f$ represents a forced fluid extraction or injection process. We consider a bounded open subset $\Omega \subset \RR^d$, $d=2,3$ with regular boundary $\Gamma$.

In many physical applications, the values of some of the parameters described above may vary over orders of magnitude.
For instance, in geophysical applications, the permeability can typically range from $10^{-9}$ to $10^{-21}m^2$
\cite{LeeMardalWinther,Wang2000}.
Similarly, in biophysical applications such as in the modeling of soft tissue or bone, the permeability can range from $10^{-14}$ to $10^{-16}m^2$
\cite{BenHatira2012,SmithHumphrey2007,Stoverud2016}.
The Poisson ratio, which is the ratio of transverse strain to axial strain,
ranges from $0.1$ to $0.5$ in these applications as well.
A Poisson ratio of $0.5$ indicates an incompressible material, at which
%In the limit as the poisson ratio approaches $0.5$,
\edit{the linear-elastic term becomes positive semi-definite.}
%the Biot model resembles a \edit{positive semi-definite system.}
% saddle point system.
Due to the variation in relevant  values of these physical parameters, it
is important to use discretizations that are stable, independently of
the parameters.
Therefore, in this work we build upon a parameter-robust
discretization introduced in \cite{Rodrigo2017}.

There are several formulations of Biot's model, and many stable
  finite-element schemes have been developed for each of them.  For
  instance, in what is called the two-field formulation (displacement
  and pressure are unknowns), Taylor-Hood elements which satisfy an
  appropriate inf-sup condition have been used \cite{MuradLoula92,
    MuradLoula94, MuradLoulaThome}. Unstable finite-element pairs
with appropriate stabilization techniques, such as the MINI element
have also been developed \cite{RGHZ2016}. Robust block preconditioners
for the two-field formulation were studied in \cite{Adler2017}. For
three-field formulations (displacement, pressure, and Darcy velocity
are unknowns), a parameter-independent approach is found in
\cite{HongKraus}.  There, the parameter-robust stability is studied
based on a slightly different norm used here, and robust block
diagonal preconditioners are proposed.   Another stable discretization
for the three-field formulation is Crouzeix-Raviart for displacement,
lowest order Raviart-Thomas-\Nedelec elements for Darcy's velocity,
and piecewise constants for the pressure \cite{Hu2017}.  Additionally,
a different three-field formulation (displacement, fluid pressure, and
total pressure are unknowns) was introduced in~\cite{LeeMardalWinther}
and a corresponding parameter-robust scheme is studied in the
same paper.  For a four-field formulation (stress tensor, fluid flux,
displacement, and pore-pressure are unknowns), a stable discretization was developed in \cite{lee_four_field}.

In all the cases above, typical discretizations result in a large-scale linear system of equations to solve at each time step. Such linear systems are usually ill-conditioned and difficult to solve in practice.
Also, due to their size, iterative solution
techniques are usually considered.
%As is the goal of this work, efficient preconditioners are often developed to accelerate the convergence of Krylov subspace methods.
One approach to solving the coupled poromechanics equations considered
here is a sequential method, \edit{such as} the fixed stress iteration, which consists of first approximating
\edit{the fluid part and then the geomechanical part.}
%either the geomechanical part or the fluid flow and then solving
%the remaining system.
This is then repeated until the solution has
converged to within a specified tolerance (see
\cite{ALMANI2016180,BAUSE2017745,BORREGALES20191466,BOTH2017101,Mikeli2013} for details). %\xh{This is fixed stress iteration, we should say it.}
Another approach, is to solve the linear system simultaneously for all unknowns.
Examples of this in poromechanics can be found in
\cite{Adler2017,CASTELLETTO2016894,NME:NME2702,NLA:NLA372,GASPAR2017526,NLA:NLA2074} and the references within.
Analysis from \cite{Castelletto2015,WHITE201655} indicates that such a fully-implicit method outperforms the convergence rate of the sequential-implicit methods.

Thus, in this work, we take the latter approach and develop robust
block preconditioners (e.g. \cite{HElman_etal_2006a,HElman_etal_2008a,TGeenen_etal_2010a}) to accelerate
the convergence of Krylov subspace methods solving the full linear system of equations resulting from the
discretization of a three-field formulation of Biot's model.
The proposed preconditioners take advantage of the block structure of
the discrete model, decoupling the different fields at the
preconditioning stage. Such block preconditioning is primarily attractive
due to its simplicity, which allows us to focus on the character of
the diagonal blocks, and to leverage extensive work on solving simpler
problems.   For instance, one can take advantage of algebraic
multigrid for some of the blocks \cite{ABrandt_SFMcCormick_JWRuge_1984a},
or auxiliary space decomposition for others \cite{Arnold2000,HiptmairXu2007}.
Finally, since we use a stabilized discretization that is well-posed
with respect to the physical and discretization parameters \cite{Rodrigo2017}, we are
able to develop robust block preconditioners that efficiently solve the
linear systems, independently of such parameters as well.

The rest of the paper is organized as follows. Section~\ref{sec:threefield}
reintroduces the three-field formulation and stabilized finite-element
discretization considered.
Stability of a perturbation to the finite-element discretization is discussed in Section~\ref{sec:wellposed}.
The block preconditioners are then developed in
Section~\ref{sec:blockprec}, presenting both block diagonal and block
triangular approaches.  Finally, numerical results confirming
the robustness and effectiveness of the preconditioners are shown
in Section~\ref{sec:num}, and concluding remarks are made in Section~\ref{sec:conc}.

%%%%%%%%%%
\section{Three-Field Formulation and its Discretization}\label{sec:threefield}
%%%%%%%%%%

%\james{You need a couple of sentences on how there are a variety of
%  different formulations with references to people's work on them.
%  Then, mention some background on three-field formulation work and
%  preconditioners for that.}
%\xh{I agree with James that we need this discussion. Maybe we should do it in the introduction?}

The focus of this paper is on the three-field formulation, in which Darcy's velocity, ${\bm w}$, is also a primary unknown in addition to the displacement, ${\bm u}$, and pressure, $p$. As a result we have the following system of PDEs:
\begin{eqnarray*}
&& -\ddiv\bm{\sigma}' + \alpha \nabla p = \rho \bm{g},\qquad
\mbox{where} \quad
  \bm{\sigma}' = 2\mu  {\boldsymbol \varepsilon}(\bm{u})  +
   \lambda\ddiv(\bm{u}) \bm{I}, \label{three-field1}\\
  && \displaystyle \frac{\partial}{\partial t}\left( \frac{1}{M}p + \alpha \ {\ddiv} \, {\bm u}\right)  + {\ddiv} \, {\bm w} = f,
\label{three-field3}\\
&& {\bm K}^{-1}\mu_f {\bm w} + \nabla p = \rho_f {\bm g}. \label{three-field2}
\end{eqnarray*}
This system is often subject to the following set of boundary conditions. \edit{Though non-homogeneous boundary conditions can also be used, for the sake of simplicity, we consider homogeneous case in this work}:
\begin{eqnarray*}
&&  p = 0, \quad \mbox{for}\quad x\in\overline{\Gamma}_t,
\quad \boldsymbol \sigma' \, {\bm n} = {\bm 0}, \quad
\quad \mbox{for}\quad x\in\Gamma _t, \label{bound-cond-t}\\
&& {\bm u} = {\bm 0}, \quad  \mbox{for}\quad x\in \overline{\Gamma}_c,\quad
\bm{w}\cdot \bm{n}=0, \quad \mbox{for}\quad x\in \Gamma_c,
\label{bound-cond-c}
\end{eqnarray*}
where ${\bm n}$ is the outward unit normal to the boundary,
$\overline{\Gamma}=\overline \Gamma_t \cup \overline \Gamma_c$;
$\Gamma_t$ and $\Gamma_c$ are open (with respect to $\Gamma$) subsets of
$\Gamma$ with nonzero measure. The initial condition at $t=0$ is given by,
\begin{equation*}\label{ini-cond}
      \left( \frac{1}{M}p + \alpha \ddiv {\bm u} \right)\, (\bm{x},0)=0, \,  \bm{x} \in\Omega.
\end{equation*}
This yields the following mixed formulation
for Biot's three-field consolidation model:\\
For each $t\in (0,T]$, find %)
$(\bm u(t), p(t), \bm w(t))\in \bm V \times Q \times \bm W$ such that
\begin{eqnarray}
  && a(\bm{u},\bm{v}) - (\alpha p,\ddiv \bm{v}) = (\rho\bm g,\bm{v}),
     \quad \forall \  \bm{v}\in \bm V, \label{variational1}\\
   &&  \left(\frac{1}{M}\frac{\partial p}{\partial t} ,q \right)+\left(\alpha\ddiv \frac{\partial \bm{u}}{\partial t},q\right) + (\ddiv \bm{w},q)   = (f,q), \quad \forall \ q \in Q,\label{variational3}\\
  && ({\bm K}^{-1}\mu_f\bm{w},\bm{r}) - (p,\ddiv \bm{r}) = (\rho_f {\bm g}, \bm{r}), \quad \forall \ \bm{r}\in \bm W,\label{variational2}
\end{eqnarray}
where,
\begin{equation}\label{bilinear}
a(\bm{u},\bm{v}) =
2\mu\int_{\Omega}{\boldsymbol \varepsilon}(\bm{u}):{\boldsymbol \varepsilon}(\bm{v}) +
\lambda\int_{\Omega} \ddiv\bm{u}\ddiv\bm{v},
\end{equation}
corresponds to linear elasticity and $(\cdot,\cdot)$ denotes the standard inner product on $L^2(\Omega)$. The function spaces used in the
variational form are
\begin{eqnarray*}
&&{\bm V} = \{{\bm u}\in {\bm H}^1(\Omega) \ |  \ {\bm
   u}|_{\overline{\Gamma}_c} = {\bm 0} \},\\
  &&Q = L^2(\Omega),\\
&&{\bm W} = \{{\bm w} \in \bm{H}(\ddiv,\Omega) \ | \  ({\bm w}\cdot {\bm n})|_{\Gamma_c} = 0\},
\end{eqnarray*}
where ${\bm H}^1(\Omega)$ is the space of square integrable
vector-valued functions whose first derivatives are also square
integrable, and $\bm{H}(\ddiv,\Omega)$ contains the square integrable
vector-valued functions with square integrable divergence.

%%%%% Discretization
In \cite{Rodrigo2017}, we developed a stabilized discretization for the three-field formulation described above.
Given a partition of the domain, $\Omega$, into
$d$-dimensional simplices,
$\mathcal T_h$, we associate a triple of piecewise
polynomial, finite-dimensional spaces,
\begin{equation*}\label{include}
\bm{V}_h\subset \bm{V}, \quad
Q_h \subset Q, \quad \bm{W}_h\subset \bm{W}.
\end{equation*}
More specifically, if we choose a piecewise linear continuous
finite-element space, $\bm{V}_{h,1}$, enriched with edge/face (2D/3D)
bubble functions, $\bm{V}_b$, to form $\bm{V}_h = \bm{V}_{h,1} \oplus
\bm{V}_b$ (see~\cite[pp. 145-149]{GR1986}), a lowest order
Raviart-Thomas-\Nedelec space (RT0) for $\bm{W}_h$, and a piecewise
constant space (P0) for $Q_h$, Stokes-Biot stable conditions described in Section~\ref{sec:wellposed} are satisfied and the
formulation is well-posed.
Then, using backward Euler as a time discretization on a
time interval $(0,t_{\max{}}]$ with constant time-step size %)
$\tau$, the discrete scheme corresponding to the three-field
formulation~\eqref{variational1}-\eqref{variational2} reads:\\
Find $(\bm u_h^m, p_h^m, \bm w_h^m)\in \bm V_h \times Q_h \times \bm W_h$
such that
\begin{eqnarray*}
  && a(\bm{u}_h^m,\bm{v}_h) - (\alpha p_h^m,\ddiv \bm{v}_h) = (\rho\bm g,\bm{v}_h),
     \quad \forall \  \bm{v}_h\in \bm V_h, \label{discrete1}\\
   &&  \left(\frac{1}{M} p^m_h ,q_h \right)+\left(\alpha\ddiv \bm{u}_h^m,q_h\right) + \tau(\ddiv \bm{w}_h^m,q_h)   = (\widetilde{f}, q_h), \quad \forall \ q_h \in Q_h,\label{discrete3}\\
  && \tau({\bm K}^{-1}\mu_f\bm{w}_h^m,\bm{r}_h) - \tau(p_h^m,\ddiv \bm{r}_h) = \tau(\rho_f {\bm g}, \bm{r}_h), \quad \forall \ \bm{r}_h\in \bm W_h,\label{discrete2}
\end{eqnarray*}
where
$(\widetilde{f}, q_h) =\tau(f,q_h) + \left(\frac{1}{M} p^{m-1}_h ,q_h
\right)+\left(\alpha\ddiv \bm{u}_h^{m-1},q_h\right)$,
and
$(\bm{u}_h^m, p_h^m, \bm{w}_h^m) $ is an approximation to
$\left(\bm{u}(\cdot, t_m), p(\cdot, t_m), \bm{w}(\cdot, t_m)\right), $ at time
$t_m = m\tau, \ m = 1,2,\ldots.$
%\xh{Are you trying to say that $\bm{u}_h^m, p_h^m, \bm{w}_h^m$ are approximations at $t_m$?}
%\peter{yes.}
\edit{The last equation has been scaled by $\tau$ for symmetry.  To simplify the notation, we carry out the following stability analysis for a constant time-step size.  However, we note that utilizing a variable time-step size leads to analogous results.}
Moreover, this discrete variational form can be represented in block matrix form,
\begin{equation}\label{block_form}
\mathcal{ A} \left(
\begin{array}{c}
{\bm u}_h^b \\
{\bm u}_h^l \\
   p_h \\
  {\bm w}_h
\end{array}
\right) =
{\bm b}, \ \ \hbox{with} \ \
\mathcal{ A} = \left(
\begin{array}{cccc}
A_{bb} & A_{bl} & \alpha B_b^T & 0\\
A_{bl}^T & A_{ll} & \alpha B_l^T &0 \\
  -\alpha B_b & -\alpha B_l & \frac{1}{M} M_p & -\tau B_{\bm w} \\
  0 & 0 & \tau B_{\bm w}^T & \tau M_{\bm w}
\end{array}
\right),
%\mathcal{ A} = \left(
%\begin{array}{cccc}
%A_{bb} & A_{bl} & 0 & G_b \\
%A_{bl}^T & A_{ll} & 0 & G_l \\
%0 & 0 & \tau M_{\bm w} & \tau G \\
%G_b^T & G_l^T & \tau G^T & -M_p
%\end{array}
%\right),
\end{equation}
where ${\bm u}_b$, ${\bm u}_l$, $p$, and ${\bm w}$ are the unknown vectors for the bubble components of the displacement, the
piecewise linear components of the displacement, the
pressure, and the Darcy velocity, respectively. The
blocks in the definition of matrix $\mathcal{A}$ correspond to the following
bilinear forms:
\begin{eqnarray*}
  && a(\bm{u}_h^b,\bm{v}_h^b) \rightarrow A_{bb}, \quad a(\bm{u}_h^l,\bm{v}_h^b) \rightarrow A_{bl}, \quad  a(\bm{u}_h^l,\bm{v}_h^l) \rightarrow A_{ll}, \\
   &&-( \ddiv \bm{u}_h^b, q_h) \rightarrow B_b,  \quad   -(
   \ddiv \bm{u}_h^l, q_h) \rightarrow B_l, \quad -(\ddiv \bm{w}_h,q_h) \rightarrow B_{\bm w},\\
  && ({\bm K}^{-1}\mu_f\bm{w}_h,\bm{r}_h) \rightarrow  M_{\bm w}, \quad
 \left( p_h ,q_h \right) \rightarrow M_p,
\end{eqnarray*}
where ${\bm u}_h = {\bm u}_h^l+{\bm u}_h^b$,
${\bm u}_h^l\in \bm{V}_{h,1}$, ${\bm u}_h^b\in \bm{V}_b$, and an analogous
decomposition for $\bm{v}_h$.  We further define two matrices for use later,
\begin{equation*}
a(\bm{u}_h, \bm{v}_h) \rightarrow A_{\bm{u}}, \quad -(\ddiv \bm{u}_h, q_h) \rightarrow B_{\bm{u}},
\end{equation*}
such that $A_{\bm u} = \left( \begin{array}{cc} A_{bb} & A_{bl}\\A_{lb} & A_{ll}  \end{array}\right)$ and $B_{\bm u} = \begin{pmatrix} B_{b} & B_{l} \end{pmatrix}$.

A noteworthy result of \cite{Rodrigo2017} is that one can replace the enrichment bubble block, $A_{bb}$, in (\ref{block_form}) with a spectrally equivalent diagonal matrix, $D_{bb} := (d+1)\text{diag}(A_{bb})$,
resulting in the following linear operator,
\begin{equation}\label{block_form_diag}
\mathcal{A}^D = \left(
\begin{array}{cccc}
D_{bb} & A_{bl} & \alpha B_b^T & 0\\
A_{bl}^T & A_{ll} & \alpha B_l^T &0 \\
  -\alpha B_b & -\alpha B_l & \frac{1}{M} M_p & -\tau B_{\bm w} \\
  0 & 0 & \tau B_{\bm w}^T & \tau M_{\bm w}
\end{array}
\right).
\end{equation}
Not only is the resulting operator sparser than the operator
in (\ref{block_form}), the stabilization term can be eliminated  from
the operator in a straightforward way (i.e., static condensation), yielding,
\begin{equation}\label{block_form_elim}
\mathcal{A}^E =
\left(
  \begin{array}{ccc}
  A_{ll}-A_{bl}^T D_{bb}^{-1}A_{bl} & \alpha B_l^T- \alpha A_{bl}^T D_{bb}^{-1}B_b^T & 0\\
    -\alpha B_l+ \alpha B_b D_{bb}^{-1} A_{bl} & \frac{1}{M} M_p+\alpha^2 B_b D_{bb}^{-1} B_{b}^T & -\tau B_{\bm w} \\
  0 & \tau B_{\bm w}^T & \tau M_{\bm w}
  \end{array}
  \right).
\end{equation}
Thus, we obtain an optimal stable discretization with the lowest possible
number of degrees of freedom, equivalent to a discretization with P1-RT0-P0 elements, which itself is not stable \cite{Rodrigo2017}.
\edit{While we have reduced the number of degrees of freedom, we note that the sparsity structure of the stiffness matrix has changed as well.}
\edit{The number of non-zeros added to each row depends on the structure of the mesh. The (1,1) block and the (2,1) block increase in non-zeros per row by the number of elements adjacent to a vertex times dimension. In the worst case scenario for the structured grid formed by division of cubes into tetrahedrons, the (1,1) block grows from 37 non-zeros per row to 81 non-zeros per row. The (1,2) block and (2,2) block increase in non-zeros per row by the number of elements adjacent to each element. For the (1,2) block, this doubles the non-zeros per row. The (2,2) block is originally diagonal so the non-zeros per row increases to (spatial) dimension+2. In all cases, the computational cost of multiplication by the modified matrix has the same asymptotic behavior as the mesh size approaches zero.}
%\textcolor{blue}{(You need to give more details here. -- Xiaozhe)}

In \cite{Rodrigo2017}, it is discussed that due to the spectral
equivalence between $A_{bb}$ and $D_{bb}$, the formulation
(\ref{block_form_diag}) is still well-posed, and remains
well-posed independently of
the physical and discretization parameters.
In the following section \edit{(and appendices)}, we show this in detail, and prove that formulation (\ref{block_form_elim}) is also well posed independently of the physical and discretization parameters.

%%%%%%%%%%%%%%%%%%%%%%%%%%%%%%%%%%%%%%%%%%%%%%%%%%%%%%%%%%%%%%%%%%%%%%%%%%%%%%%
%%%%%%%%%%%%%%%%%%%%%%%%%%%%%%%%%%%%%%%%%%%%%%%%%%%%%%%%%%%%%%%%%%%%%%%%%%%%%%%
\section{Well-Posedness}\label{sec:wellposed}
The well-posedness of the discretized system provides a convenient framework with which to construct block preconditioners. The discrete system using bubble enriched P1-RT0-P0 finite elements (\ref{block_form}), which will be referred to as the ``full bubble system'', is shown to be well-posed in \cite{Rodrigo2017}.
%The perturbed system with diagonal bubble operator (\ref{block_form_diag}) is also shown to be well-posed in \cite{Rodrigo2017}.
However, since (\ref{block_form}) is indefinite, the well-posedness of (\ref{block_form_elim}) does not simply follow directly.
Therefore, in this section, we show well-posedness of (\ref{block_form_elim}), as well as (\ref{block_form_diag}), which enables block preconditioners for both the full system and the ``bubble-eliminated system" to be constructed using the same framework.  \edit{Since the proofs are quite technical, we include them in the appendices for completeness.}
%To enhance readability, (\ref{block_form_elim}) will be referred to as the ``bubble-eliminated system''.
First, we give a short overview of the full bubble system case.

To start, for any symmetric positive definite (SPD) matrix $H$, we define the corresponding inner product as $(\bm{u}, \bm{v})_H := (H \bm{u}, \bm{v})$ and the induced norm as\\ $\| \bm{v} \|_H^2:= (\bm{v}, \bm{v})_H$.  In association with the discretized space, $\bm{X}_h:= \bm{V}_h \times Q_h \times \bm{W}_h$,
%we introduce the following composite bilinear form
%\begin{equation}\label{eqn:comp-bilinear}
%\begin{split}
%B(\bm{u}_h, p_h, \bm{w}_h; \bm{v}_h, q_h, \bm{r}_h) := &~
%a(\bm{u}_h,\bm{v}_h) - (\alpha p_h,\ddiv \bm{v}_h)\\
%& - \left(\alpha\ddiv \bm{u}_h,q_h\right) - \tau(\ddiv \bm{w}_h,q_h) - %\left(\frac{1}{M} p_h ,q_h \right) \\
%&+ \tau({\bm K}^{-1}\mu_f\bm{w}_h,\bm{r}_h) - \tau(p_h,\ddiv \bm{r}_h),\\
%\end{split}
%\end{equation}
we introduce the following weighted norm, for $\bm{x}_h = (\bm{u}_h, p_h, \bm{w}_h)^T \in \bm{X}_h$,
\begin{equation}\label{weighted-norm}
%\vertiii{({\bm u}_h,p_h,{\bm w}_h)} := \left [  \| {\bm u}_h \|^2_A +c_p^{-1}  \| p_h \|^2 + \tau \| {\bm w}_h \|^2_{{\bm K}^{-1}\mu_f} + \tau^2 c_p \| \ddiv {\bm w}_h\|^2 \right ]^{1/2},
\vertiii{\bm{x}_h} := \left [  \| {\bm u}_h \|^2_{A_{\bm u}} +c_p^{-1}  \| p_h \|_{M_p}^2 + \tau \| {\bm w}_h \|^2_{M_{\bm w}} + \tau^2 c_p \| B_{\bm w} {\bm w}_h\|_{M_p^{-1}}^2 \right ]^{1/2},
\end{equation}
%where $\|\bm{v}\|_A^2 := a(\bm{v},\bm{v})$, $\| \bm{r} \|_{{\bm K}^{-1}\mu_f} ^2:= ({\bm K}^{-1}\mu_f \bm{r}, \bm{r})$,
%where $\|\bm{v}\|_A^2 := a(\bm{v},\bm{v})$, $\| \bm{r} \|_{M_{\bm w}} ^2:= ({\bm K}^{-1}\mu_f \bm{r}, \bm{r})$,
where $c_p := \left(\frac{\alpha^2}{\zeta^2}+\frac{1}{M}\right)^{-1}$ with $\zeta := \sqrt{\lambda+2\mu/d}$,
and $d=2$ or $3$ is the dimension of the problem.
Under certain conditions (referred to
as Stokes-Biot stability \cite[Def. 3.1]{Rodrigo2017}) on the space $\bm{X}_h$, the block matrix form $\mathcal{A}$ defined in~\eqref{block_form} is well-posed with respect to the weighted norm (\ref{weighted-norm}), i.e., the following
%the composite bilinear form (\ref{eqn:comp-bilinear})
continuity and inf-sup condition hold for $\bm{x}_h\in \bm{X}_h$ and $\bm{y}_h = (\bm{v}_h, q_h, \bm{r}_h)^T \in \bm{X}_h$,
\begin{align}
& \sup_{\bm{0} \neq \bm{x}_h \in \bm{X}_h} \sup_{\bm{0} \neq \bm{y}_h \in \bm{X}_h } \frac{ (\mathcal{A} \bm{x}_h, \bm{y}_h)}{ \vertiii{\bm{x}_h} \vertiii{\bm{y}_h} } \leq \varsigma, \label{supsup-B} \\
& \inf_{\bm{0} \neq \bm{y}_h \in \bm{X}_h} \sup_{\bm{0} \neq \bm{x}_h \in \bm{X}_h} \frac{ (\mathcal{A} \bm{x}_h, \bm{y}_h)}{ \vertiii{\bm{x}_h} \vertiii{\bm{y}_h} } \geq \gamma, \label{infsup-B}
\end{align}
%\begin{equation}\label{infsup-B}
%\sup_{(\bm{v}_h, q_h, \bm{r}_h) \in {\bm V}_h\times Q_h\times \bm W_h}  \frac{B(\bm{u}_h, p_h, \bm{w}_h; \bm{v}_h, q_h, \bm{r}_h)}{\vertiii{ \left( \bm{v}_h, q_h, \bm{r}_h \right) }} \geq \gamma \vertiii{  \left( \bm{u}_h,  p_h,  \bm{w}_h  \right) },
%\end{equation}
with constants $\varsigma >0$ and $\gamma > 0$ independent of mesh size $h$, time step size $\tau$, and the physical parameters.
As mentioned earlier, these conditions are satisfied by our choice of finite-element spaces.

System \eqref{block_form_diag} satisfies similar continuity and inf-sup conditions.
From \cite{Rodrigo2017}, we know that $A_{bb}$ is spectrally equivalent to $D_{bb}$ and $A_{\bm{u}}$ is spectrally equivalent to $A_{\bm{u}}^D = \begin{pmatrix} D_{bb} & A_{bl}\\A_{lb} & A_{ll}  \end{pmatrix}$, specifically,
\begin{equation}\label{eqn:ADequiv}
\| \bm{u}^b \|_{A_{bb}} \leq \| \bm{u}^b \|_{D_{bb}} \leq \eta \| \bm{u}^b \|_{A_{bb}} \ \text{and} \
\| \bm{u} \|_{A_{\bm{u}}} \leq \| \bm{u} \|_{A_{\bm{u}}^D} \leq \eta\| \bm{u} \|_{A_{\bm{u}}},
\end{equation}
where the constant $\eta$ depends only on the shape regularity of the mesh.
With the above results, we now \edit{state} %show
the well-posedness of the system given by \eqref{block_form_diag}.

\begin{theorem}\label{thm:diag_wellposed}
If $(\bm{V}_h, \bm{W}_h, Q_h)$ is Stokes-Biot stable, then:
\begin{align}
& \sup_{\bm{0} \neq \bm{x}_h \in \bm{X}_h} \sup_{\bm{0} \neq \bm{y}_h \in \bm{X}_h } \frac{ (\mathcal{A}^D \bm{x}_h, \bm{y}_h)}{ \|\bm{x}_h\|_{\mathcal{D}} \|\bm{y}_h\|_{\mathcal{D}}} \leq \tilde{\varsigma}, \label{supsup-AD} \\
& \inf_{\bm{0} \neq \bm{y}_h \in \bm{X}_h} \sup_{\bm{0} \neq \bm{x}_h \in \bm{X}_h} \frac{ (\mathcal{A}^D \bm{x}_h, \bm{y}_h)}{ \|\bm{x}_h\|_{\mathcal{D}} \|\bm{y}_h\|_{\mathcal{D}} } \geq \tilde{\gamma}, \label{infsup-AD}
\end{align}
where,
\begin{equation*} %\label{eqn:weighted_norm_matrix}
\mathcal{D}  =
\begin{pmatrix}
  D_{bb}	& A_{bl}	& 0		& 0 \\
  A_{bl}^T	& A_{ll}	& 0		& 0 \\
  0		& 0 		& \left(\frac{\alpha^2}{\zeta^2} + \frac{1}{M}\right) M_p 	& 0 \\
  0 		& 0		& 0			& \tau M_{\bm w} + \tau^2 c_p A_{\bm w}
\end{pmatrix},
\end{equation*}
and $A_{\bm w} := B_{\bm w}^T M_p^{-1} B_{\bm w}$.
\end{theorem}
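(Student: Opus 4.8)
The plan is to deduce the well-posedness of $\mathcal{A}^D$ with respect to $\|\cdot\|_{\mathcal{D}}$ directly from the already-established well-posedness of $\mathcal{A}$ with respect to $\vertiii{\cdot}$, using the spectral equivalences~\eqref{eqn:ADequiv} as the bridge. The first step is to observe that the weighted norm $\vertiii{\cdot}$ is itself induced by a block-diagonal SPD operator: indeed, comparing~\eqref{weighted-norm} with the definition of $\mathcal{D}$, one sees that $\vertiii{\bm{x}_h}^2 = (\mathcal{B}\bm{x}_h,\bm{x}_h)$ where $\mathcal{B}$ has the same structure as $\mathcal{D}$ but with the leading displacement block $A_{\bm u}$ instead of $A_{\bm u}^D$ (the pressure block $c_p^{-1}M_p$ and the Darcy block $\tau M_{\bm w} + \tau^2 c_p B_{\bm w}^T M_p^{-1} B_{\bm w} = \tau M_{\bm w} + \tau^2 c_p A_{\bm w}$ coincide, using $c_p^{-1} = \alpha^2/\zeta^2 + 1/M$). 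So $\|\cdot\|_{\mathcal{D}}$ and $\vertiii{\cdot}$ differ only in the displacement block, and there~\eqref{eqn:ADequiv} gives $\|\bm{u}_h\|_{A_{\bm u}} \le \|\bm{u}_h\|_{A_{\bm u}^D} \le \eta\|\bm{u}_h\|_{A_{\bm u}}$, hence
\begin{equation*}
\vertiii{\bm{x}_h} \le \|\bm{x}_h\|_{\mathcal{D}} \le \eta\,\vertiii{\bm{x}_h}, \qquad \forall\,\bm{x}_h\in\bm{X}_h,
\end{equation*}
with $\eta$ depending only on shape regularity.

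The second step handles the change in the operator itself, $\mathcal{A} \rightsquigarrow \mathcal{A}^D$. The two operators differ only in the $(1,1)$ bubble block, $A_{bb}$ versus $D_{bb} = (d+1)\diag(A_{bb})$. Writing $\mathcal{A}^D = \mathcal{A} + \mathcal{E}$, the perturbation $\mathcal{E}$ is the block matrix whose only nonzero entry is $D_{bb} - A_{bb}$ in the $(1,1)$ slot. For the continuity bound~\eqref{supsup-AD} I would simply estimate $(\mathcal{A}^D\bm{x}_h,\bm{y}_h)$ by splitting off the bubble term: $|((D_{bb}-A_{bb})\bm{u}_h^b,\bm{v}_h^b)| \le (\|\bm{u}_h^b\|_{D_{bb}} + \|\bm{u}_h^b\|_{A_{bb}})\,\|\bm{v}_h^b\|_{D_{bb}} \lesssim \|\bm{u}_h^b\|_{A_{\bm u}^D}\|\bm{v}_h^b\|_{A_{\bm u}^D} \le \|\bm{x}_h\|_{\mathcal{D}}\|\bm{y}_h\|_{\mathcal{D}}$, while the remaining part is $(\mathcal{A}\bm{x}_h,\bm{y}_h)$ which is controlled by $\varsigma\,\vertiii{\bm{x}_h}\vertiii{\bm{y}_h} \le \varsigma\,\|\bm{x}_h\|_{\mathcal{D}}\|\bm{y}_h\|_{\mathcal{D}}$ via the norm equivalence above. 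Collecting constants gives $\tilde\varsigma$ independent of $h$, $\tau$, and the physical parameters.

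The third step — and the one I expect to be the main obstacle — is the inf-sup bound~\eqref{infsup-AD}, since a perturbation argument must preserve a \emph{lower} bound. The cleanest route is not to perturb but to re-run the original inf-sup proof of~\eqref{infsup-B} verbatim with $A_{\bm u}$ replaced by $A_{\bm u}^D$: the argument in~\cite{Rodrigo2017} constructs, for each $\bm{y}_h$, a suitable test function $\bm{x}_h$ (typically a combination of $\bm{y}_h$ itself with a Fortin-type correction built from the Stokes-Biot stability of $(\bm{V}_h,\bm{W}_h,Q_h)$), and every place $A_{bb}$ entered it did so only through the spectral equivalence with $D_{bb}$ and through coercivity on the bubble space — both of which survive the replacement with the same structural constants, because $D_{bb}$ is SPD and spectrally equivalent to $A_{bb}$ uniformly. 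Concretely, one checks that $\mathcal{A}^D$ still satisfies: (i) coercivity of the displacement block on $\bm{V}_h$ in the $A_{\bm u}^D$-norm (immediate, it \emph{is} that norm); (ii) the same inf-sup condition relating $\ddiv\bm{v}_h$ to $Q_h$ (unchanged, the $B$ blocks are untouched); (iii) the same properties of the Darcy block (also untouched). Since Stokes-Biot stability is exactly the hypothesis that makes the abstract Babuška–Brezzi machinery go through, the identical proof yields~\eqref{infsup-AD} with $\tilde\gamma$ depending only on $\gamma$ and $\eta$. The delicate point to verify is that no step of the original proof used a property of $A_{bb}$ beyond SPD-ness and its spectral equivalence to the diagonal — in particular that the Fortin operator / bubble correction estimates are stated in terms of the $\|\cdot\|_{A_{bb}}$-norm and hence transfer; this is where I would spend the bulk of the technical effort, and it is presumably why the authors relegate the full argument to an appendix.
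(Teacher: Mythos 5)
Your plan is correct and is essentially the paper's own route: the appendix proof re-runs the standard inf-sup construction directly for $\mathcal{A}^D$, picking $\bm{z}$ from the Stokes inf-sup condition normalized so that $\|\bm{z}\|_{A_{\bm u}^D}=\|p\|_{M_p}$ (the spectral equivalence \eqref{eqn:ADequiv} entering only through the factor $1/\eta$) and testing with $\bm{v}=\bm{u}-\theta_1\bm{z}$, $q=-p-\theta_2\tau\ddiv\bm{w}$, $\bm{r}=\bm{w}$, together with the bound $\|B\bm{v}\|_{M_p^{-1}}\leq \zeta^{-1}\|\bm{v}\|_{A_{\bm u}}$ — exactly the "Fortin-type correction" re-derivation you describe in your third step, while the continuity bound is treated as routine. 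So the proposal matches the paper's argument in substance, with the only deferred work being the explicit Cauchy--Schwarz/Young computation and choice of $\theta_1,\theta_2$ that the paper carries out in Appendix~\ref{sec:appA}.
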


\begin{remark}
In general, Theorem~\ref{thm:diag_wellposed} implies that if we have a well-posed saddle point problem with an SPD first diagonal block, one can replace the first diagonal block by a spectrally equivalent matrix and the resulting saddle point problem is still well-posed.
\end{remark}

\edit{The proof of Theorem~\ref{thm:diag_wellposed} follows from the framework presented in \cite{HongKraus}. We have included a proof in Appendix \ref{sec:appA}, as there are details related to the perturbed bilinear form that are not straightforward.}
Next, we consider the reduced bubble-eliminated formulation, (\ref{block_form_elim}).
\edit{Here, we denote $\bm{X}_h^{E} := {\bm V}_{1,h}\times Q_h\times \bm W_h$ the discretized finite-element space after bubble elimination.}

\begin{theorem}\label{thm:ElimWP} % bubble-eliminated well posed
    If the full system (\ref{block_form}) is well-posed, satisfying \eqref{supsup-B} and \eqref{infsup-B} with respect to the norm~\eqref{weighted-norm}, then the bubble-eliminated system, \eqref{block_form_elim},
    satisfies the following inequalities for $\bm{x}^E = (\bm{u}_l, p_h, \bm{w}_h)^T \in \bm{X}^E_h$ and $\bm{y}^E = (\bm{v}_l, q_h, \bm{r}_h)^T \in \bm{X}^E_h$,
    \begin{equation} \label{infsup-E}
	\inf_{\bm{0}\neq \bm{x}^E \in \bm{X}_h^E} \sup_{\bm{0} \neq \bm{y}^E\in \bm{X}_h^E }
	\frac{(\mathcal{A}^E \bm{x}^E, \bm{y}^E)}{\|\bm{x}^E\|_{\mathcal{D}^E}\|\bm{y}^E\|_{\mathcal{D}^E}} \geq \gamma^*,
    \end{equation}
    and,
    \begin{equation} \label{continuity-E}
      \sup_{\bm{0} \neq \bm{x}^E \in \bm{X}_h^E} \sup_{\bm{0} \neq \bm{y}^E \in \bm{X}^E_h} 	\frac{(\mathcal{A}^E \bm{x}^E, \bm{y}^E)}{\| \bm{x}^E \|_{\mathcal{D}^E} \| \bm{y}^E \|_{\mathcal{D}^E}} \leq \varsigma ,
    \end{equation}
    where
	\begin{equation*}
	  \mathcal{D}^E=
	  \left(\begin{array}{ccc}
	  A_{ll} - A_{bl}^T D_{bb}^{-1} A_{bl}		& 0				& 0 \\
	  0	 	& \alpha^2 B_b D_{bb}^{-1} B_b^T + c_p^{-1}M_p 	& 0 \\
	  0		& 0			& \tau M_{\bm w} + \tau^2 c_p A_{\bm w}
	  \end{array}\right),
	\end{equation*}
	with
	\begin{equation} \label{eqn:weighted_norm_elim}
%	\vertiii{({\bm u}_h,p_h,{\bm w}_h)}_D
\| \bm{x}^E \|_{\mathcal{D}^E}^2 =  (\mathcal{D}^E \bm{x}^E, \bm{x}^E).
%	\|({\bm u}_l,p_h,{\bm w}_h)\|_{\mathcal{D}^E}^2
%		:= \left( \mathcal{D}^E ({\bm u}_h,p_h,{\bm w}_h), ({\bm u}_h,p_h,{\bm w}_h) \right).
%		:= \left [  \| {\bm u}_h \|^2_{\AuE}
%		+\| p_h \|_{\ApEs}^2 + \tau \| {\bm w}_h \|^2_{{\bm K}^{-1}\mu_f} + \tau^2
%		c_p \| \ddiv {\bm w}_h\|^2 \right ]^{1/2},
	\end{equation}
%	with $\AuE=A_{ll} - A_{bl}^T D_{bb}^{-1} A_{bl}$, $\ApEs=\alpha^2 B_b D_{bb}^{-1} B_b^T + c_p^{-1} M_p$, and
Thus, (\ref{block_form_elim}) is well-posed with respect to the weighted norm (\ref{eqn:weighted_norm_elim}).
\end{theorem}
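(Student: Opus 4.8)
The plan is to deduce the well-posedness of $\mathcal{A}^E$ from that of $\mathcal{A}^D$ established in Theorem~\ref{thm:diag_wellposed} (applicable because the Stokes--Biot stability underlying the present hypothesis is exactly its assumption), by reading the bubble elimination as a Schur-complement reduction and transporting the inequalities \eqref{supsup-AD}--\eqref{infsup-AD} along a discrete harmonic extension. The structural starting point is that $\mathcal{A}^E$ in \eqref{block_form_elim} is precisely the Schur complement of $\mathcal{A}^D$ with respect to the SPD bubble block $D_{bb}$: grouping the bubble unknowns first, the off-diagonal couplings are the row $\mathbf{c}_1 = (A_{bl},\, \alpha B_b^T,\, 0)$ and the column $\mathbf{c}_2 = (A_{bl}^T,\, -\alpha B_b,\, 0)^T$ (the sign asymmetry encoding the saddle-point structure), and a direct computation gives $\mathcal{A}^E = \widehat{\mathcal{A}} - \mathbf{c}_2 D_{bb}^{-1}\mathbf{c}_1$ with $\widehat{\mathcal{A}}$ the remaining $3\times3$ block. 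I therefore introduce the $D_{bb}$-discrete-harmonic extension $P\colon\bm{X}_h^E\to\bm{X}_h$,
\begin{equation*}
  P(\bm{u}_l, p_h, \bm{w}_h) := \bigl(-D_{bb}^{-1}(A_{bl}\bm{u}_l + \alpha B_b^T p_h),\ \bm{u}_l,\ p_h,\ \bm{w}_h\bigr),
\end{equation*}
built so that the bubble row of $\mathcal{A}^D P\bm{x}^E$ vanishes; block multiplication then shows that $\mathcal{A}^D P\bm{x}^E$ has zero bubble component and non-bubble component equal to $\mathcal{A}^E\bm{x}^E$, whence the pairing identity $(\mathcal{A}^E\bm{x}^E,\bm{y}^E) = (\mathcal{A}^D P\bm{x}^E,\widehat{\bm{y}})$ for \emph{any} $\widehat{\bm{y}}\in\bm{X}_h$ whose non-bubble part equals $\bm{y}^E$.

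The second ingredient is the norm identity $\|P\bm{x}^E\|_{\mathcal{D}} = \|\bm{x}^E\|_{\mathcal{D}^E}$. Expanding $\|(\bm{u}_b,\bm{u}_l)\|_{A_{\bm{u}}^D}^2 = \bm{u}_b^T D_{bb}\bm{u}_b + 2\,\bm{u}_b^T A_{bl}\bm{u}_l + \bm{u}_l^T A_{ll}\bm{u}_l$ with $\bm{u}_b = -D_{bb}^{-1}(A_{bl}\bm{u}_l + \alpha B_b^T p_h)$, the cross terms telescope and this quantity collapses to $\|\bm{u}_l\|_{A_{ll}-A_{bl}^T D_{bb}^{-1}A_{bl}}^2 + \alpha^2\|p_h\|_{B_b D_{bb}^{-1}B_b^T}^2$; since $P$ leaves the pressure and Darcy contributions to the $\mathcal{D}$-norm untouched, adding them reproduces exactly the diagonal blocks of $\mathcal{D}^E$ --- the stabilization term $\alpha^2 B_b D_{bb}^{-1}B_b^T$ in the pressure block of $\mathcal{D}^E$ being precisely what the bubble elimination injects into the norm. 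Thus $P$ is a $\|\cdot\|_{\mathcal{D}^E}$--$\|\cdot\|_{\mathcal{D}}$ isometry, which (as $P$ is injective) also shows $\mathcal{D}^E$ is SPD, so \eqref{eqn:weighted_norm_elim} is a genuine norm. Continuity \eqref{continuity-E} is now immediate: taking $\widehat{\bm{y}} = P\bm{y}^E$ in the pairing identity and applying \eqref{supsup-AD} gives $(\mathcal{A}^E\bm{x}^E,\bm{y}^E)\le\tilde{\varsigma}\,\|\bm{x}^E\|_{\mathcal{D}^E}\|\bm{y}^E\|_{\mathcal{D}^E}$ with $\tilde{\varsigma}$ the (parameter-independent) continuity constant of Theorem~\ref{thm:diag_wellposed}.

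For the inf-sup bound \eqref{infsup-E}, given $\bm{x}^E$ I invoke \eqref{infsup-AD} to produce $\bm{y}_h\in\bm{X}_h$ with $(\mathcal{A}^D P\bm{x}^E,\bm{y}_h)\ge\tilde{\gamma}\,\|P\bm{x}^E\|_{\mathcal{D}}\|\bm{y}_h\|_{\mathcal{D}}$, then pass to $\bm{y}^E$, the triple of non-bubble components of $\bm{y}_h$: the pairing identity preserves the left-hand side, while $\|\bm{y}_h\|_{\mathcal{D}}^2\ge\min_{\bm{v}_b}\|(\bm{v}_b,\bm{y}^E)\|_{\mathcal{D}}^2 = \|\bm{y}^E\|_{\mathcal{D}^E}^2 - \alpha^2\|q_h\|_{B_b D_{bb}^{-1}B_b^T}^2$ (with $q_h$ the pressure part of $\bm{y}^E$). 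The crux of the argument --- and the step I expect to be the main obstacle, since it is where parameter-robustness is genuinely earned --- is the uniform spectral estimate $\alpha^2 c_p\,B_b D_{bb}^{-1}B_b^T\preceq M_p$, which bounds the surplus term by $c_p^{-1}\|q_h\|_{M_p}^2$. I will obtain it from \eqref{eqn:ADequiv}, the pointwise inequality $\|\ddiv\bm{v}\|^2\le d\,\|\boldsymbol{\varepsilon}(\bm{v})\|^2$, and the definition of $\zeta$, which together give $(D_{bb}\bm{u}_b,\bm{u}_b)\ge\|\bm{u}_b\|_{A_{bb}}^2 = a(\bm{u}_b,\bm{u}_b)\ge(\lambda+2\mu/d)\,\|\ddiv\bm{u}_b\|_{L^2(\Omega)}^2 = \zeta^2\|\ddiv\bm{u}_b\|_{L^2(\Omega)}^2$, combined with $|(\ddiv\bm{u}_b,q_h)|\le\|\ddiv\bm{u}_b\|_{L^2(\Omega)}\|q_h\|_{M_p}$ and a duality argument yielding $B_b D_{bb}^{-1}B_b^T\preceq\zeta^{-2}M_p$, and finally $\alpha^2 c_p\le\zeta^2$ (from $c_p^{-1} = \alpha^2/\zeta^2 + 1/M\ge\alpha^2/\zeta^2$). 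Consequently $\|\bm{y}^E\|_{\mathcal{D}^E}^2\le 2\,\|\bm{y}_h\|_{\mathcal{D}}^2$, and together with $\|P\bm{x}^E\|_{\mathcal{D}} = \|\bm{x}^E\|_{\mathcal{D}^E}$ this gives \eqref{infsup-E} with $\gamma^* = \tilde{\gamma}/\sqrt{2}$. All constants trace back to Theorem~\ref{thm:diag_wellposed} (hence to \eqref{supsup-B}--\eqref{infsup-B} and the shape-regularity constant $\eta$) and to the definitions of $c_p$ and $\zeta$, so none depends on $h$, $\tau$, or the physical parameters, and \eqref{block_form_elim} is well-posed with respect to $\|\cdot\|_{\mathcal{D}^E}$.
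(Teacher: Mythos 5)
Your proposal is correct, and it takes a genuinely different route from the paper. The paper proves this result (Appendix B) via the congruence $\mathcal{A}^D = L\,\mathcal{S}\,\tilde{L}^T$, establishes two-sided spectral equivalences $\tfrac14\tilde{\mathcal{D}} \preceq L^{-1}\mathcal{D}L^{-T} \preceq 2\tilde{\mathcal{D}}$ (and likewise for $\tilde{L}$) using Lemma~\ref{lemma:div_leq_A} and Corollary~\ref{corollary:divb_leq_Ab}, transfers the inf-sup and boundedness of $\mathcal{A}^D$ to $\mathcal{S}$ in the $\tilde{\mathcal{D}}$-norm, and then invokes Proposition~\ref{prop-SVD} (Boffi--Brezzi--Fortin) to extract the bounds for the diagonal sub-block $\mathcal{A}^E$, at the price of degraded constants ($\gamma^*=\tilde{\gamma}/16$). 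You instead work one-sidedly with the extension $P$, which is algebraically $\tilde{L}^{-T}$ applied to the zero-bubble extension of $\bm{x}^E$; your two key observations --- that $\mathcal{A}^D P\bm{x}^E$ has vanishing bubble block so $(\mathcal{A}^E\bm{x}^E,\bm{y}^E)=(\mathcal{A}^D P\bm{x}^E,\widehat{\bm{y}})$, and that $P$ is an exact $\mathcal{D}^E$--$\mathcal{D}$ isometry because the cross terms telescope into precisely the stabilization term $\alpha^2 B_b D_{bb}^{-1}B_b^T$ of $\mathcal{D}^E$ --- make continuity immediate with constant $\tilde{\varsigma}$ and reduce the inf-sup to the single estimate $\alpha^2 c_p\, B_b D_{bb}^{-1}B_b^T \preceq M_p$, which is the same content as the paper's Lemma~\ref{lemma:div_leq_A}/Corollary~\ref{corollary:divb_leq_Ab} and which you derive correctly from $a(\bm{v},\bm{v})\geq\zeta^2\|\ddiv\bm{v}\|^2$, \eqref{eqn:ADequiv}, and $\alpha^2 c_p\leq\zeta^2$. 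This buys sharper, fully explicit constants ($\gamma^*=\tilde{\gamma}/\sqrt{2}$) and avoids both the two-sided equivalence computations and the singular-value proposition. One point you should make explicit: you use \eqref{infsup-AD} in the direction ``for each first argument $P\bm{x}^E$ there exists a test function $\bm{y}_h$,'' i.e.\ the transposed inf-sup $\inf_{\bm{x}_h}\sup_{\bm{y}_h}$, whereas \eqref{infsup-AD} is stated as $\inf_{\bm{y}_h}\sup_{\bm{x}_h}$; in this finite-dimensional square setting both coincide with the smallest singular value of $\mathcal{D}^{-1/2}\mathcal{A}^D\mathcal{D}^{-1/2}$ (and the construction in Appendix~\ref{sec:appA} in fact produces the test function for a given first argument), so the step is sound, but a sentence recording this equivalence is needed for completeness.
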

\edit{The proof of Theorem~\ref{thm:ElimWP} is technical due to $\mathcal{A}$ being indefinite.  Therefore, it is included in Appendix \ref{sec:appB} for the interested reader.}

\section{Block Preconditioners}\label{sec:blockprec}
Next, we use the properties of the well-posedness to develop block preconditioners for~$\mathcal{A}$ and $\mathcal{A}^E$.  Following the general framework developed \edit{in~\cite{CASTELLETTO2016894,Loghin2004,NLA:NLA716,WHITE201655,Hong2019}}, we first consider block diagonal preconditioners (also known as norm-equivalent preconditioners).  Then, we discuss
block triangular (upper and lower) preconditioners following the framework developed in~\cite{Klawonn1999,Loghin2004,Yicong2016,Starke1996} for Field-of-Value (FOV) equivalent preconditioners.  For both cases, we show that the theoretical bounds on their performance remain independent of the discretization and physical parameters of the problem.

%%%%%%%%%%%%%%%%%%%%%%%%%%%%%%%%%%%%%%%%%%%%%%%%%%%%%%%%%%%%%%%%%%%%%%%%%%%%%
%%%%%%%%%%%%%%%%%%%%%%%%%%%%%%%%%%%%%%%%%%%%%%%%%%%%%%%%%%%%%%%%%%%%%%%%%%%%%
\subsection{Block Diagonal Preconditioner}\label{sec:blockdiag}
%%%%%%%%%%%%%%%%%%%%%%%%%%%%%%%%%%%%%%%%%%%%%%%%%%%%%%%%%%%%%%%%%%%%%%%%%%%%%

Both the full bubble system, (\ref{block_form}), and the bubble-eliminated system, (\ref{block_form_elim}), are well-posed, satisfying inf-sup conditions, (\ref{infsup-B}) and (\ref{infsup-E}) respectively. Based on the framework proposed in~\cite{Loghin2004,NLA:NLA716},  a natural choice for a norm-equivalent preconditioner is the Riesz operator with respect to the inner product corresponding to respective weighted norm~(\ref{weighted-norm}) or (\ref{eqn:weighted_norm_elim}).

\subsubsection{Full Bubble System}
For the full bubble system, the Riesz operator for (\ref{weighted-norm}) takes the following block diagonal matrix form:
%%%%%%%%%%%%%%%%%% Full Case Start
\begin{equation}\label{blk-diag-prec}
	\mathcal{B}_D =
	\left(
	\begin{array}{ccc}
	A_{\bm u} & 0 & 0 \\
	0 & \left(\frac{\alpha^2}{\zeta^2} + \frac{1}{M}\right) M_p &0 \\
	0 & 0 & \tau M_{\bm w} + \tau^2\left(\frac{\alpha^2}{\zeta^2} + \frac{1}{M}\right)^{-1} A_{\bm w}
	\end{array}
	\right)^{-1}.
\end{equation}
%Here,
%$A_{\bm u} = \left( \begin{array}{cc} A_{bb} & A_{bl}\\A_{lb} & A_{ll}  \end{array}\right)$ is the full displacement block including both the bubble and linear terms.
%$M_{\bm{w}}$ is the scaled mass matrices for the RT0 space, and $(\ddiv \bm{w}_h, \ddiv \bm{w}_h)\rightarrow A_{\bm w}$.

In practice, applying the preconditioner $\mathcal{B}_D$ involves the
action of inverting the diagonal blocks exactly, which is
expensive and sometimes infeasible.
Therefore, we replace the diagonal blocks by their spectrally-equivalent symmetric and positive definite approximations,
\begin{equation}\label{blk-diag-prec2}
	\widehat{\mathcal{B}_D} =
	\left(	\begin{array}{ccc}
	S_{\bm u} & 0 & 0 \\
	0 & S_{p} & 0\\
	0 & 0 & S_{\bm w}
	\end{array} \right).
\end{equation}
Here, $S_{\bm u}$, $S_{\bm w}$, and $S_p$ are spectrally equivalent to the action of the inverse of their respective diagonal blocks in $\mathcal{B}_D$,
\begin{eqnarray}
	&c_{1, \bm{u}}(S_{\bm u}\bm{u},\bm{u}) \leq (A_{\bm u}^{-1} \bm{u},\bm{u}) \leq c_{2,\bm{u}}(S_{\bm u} \bm{u},\bm{u}) \label{eqn:equivalentHu},\\
	&c_{1,p}(S_p p, p) \leq \left(\left(\frac{\alpha^2}{\zeta^2} + \frac{1}{M}\right)^{-1} M_p^{-1} p, p\right) \leq c_{2,p}(S_p p , p) \label{eqn:equivalentHp},\\
%	&c_{1,\bm{w}}(S_{\bm w}\bm{w},\bm{w}) \leq \left(\left(\tau M_{\bm w} + \tau^2c_pA_{\bm w} \right)^{-1} \bm{w},\bm{w}\right)
&\qquad
	c_{1,\bm{w}}(S_{\bm w}\bm{w},\bm{w}) \leq \left(\left(\tau M_{\bm w} + \tau^2\left(\frac{\alpha^2}{\zeta^2} + \frac{1}{M}\right)^{-1}A_{\bm w} \right)^{-1} \bm{w},\bm{w}\right)
		\leq c_{2,\bm{w}}(S_{\bm w} \bm{w},\bm{w}) \label{eqn:equivalentHw},
\end{eqnarray}
where the constants $c_{1,\bm{u}}$, $c_{1,p}$, $c_{1,\bm{w}}$,
$c_{2,\bm{u}}$, $c_{2,p}$, and $c_{2,\bm{w}}$ are independent of discretization and physical parameters.
%\edit{For large values of $\tau$ the matrix  $\tau M_{\bm w} + \tau^2\left(\frac{\alpha^2}{\zeta^2} + \frac{1}{M}\right)^{-1} A_{\bm w}$ is numerically close to singular may require special attention.}
%In practice, $S_{\bm u}$ can be defined by standard multigrid methods, and
%$S_{\bm w}$ can be defined by either an HX-preconditioner (Auxiliary Space Preconditioner)
%\cite{HiptmairXu2007,KolevVassilevski} or multigrid with special block smoothers
%\cite{Arnold2000}.
\edit{In practice, $S_{\bm u}$ can be defined by standard multigrid methods.
For large values of $\tau$ the matrix $\tau M_{\bm w} + \tau^2\left(\frac{\alpha^2}{\zeta^2} + \frac{1}{M}\right)^{-1} A_{\bm w}$
is numerically close to singular and requires special preconditioners. With this in mind,
$S_{\bm w}$ can be defined by either an HX-preconditioner (Auxiliary Space Preconditioner)
\cite{HiptmairXu2007,KolevVassilevski} or multigrid with special block smoothers
\cite{Arnold2000}.}
\edit{In the case of heterogeneous coefficients, specialized approaches such as in \cite{KrausPrecondHdiv2016} can be used.}
In the full bubble case, $S_p$ is obtained by a diagonal scaling ($M_p$ is diagonal when using P0 elements).
Thus, $S_p = \left(\frac{\alpha^2}{\zeta^2} + \frac{1}{M}\right)^{-1} M_p^{-1}$.
%%%%%%%%%%%%%%%%%% Full Case End

\subsubsection{Bubble-Eliminated System}
In the bubble-eliminated case, the operator for (\ref{eqn:weighted_norm_elim}) takes the following block diagonal matrix form:
%%%%%%%%%%%%%%%%% Eliminated Case Start
\begin{equation}\label{blk-diag-precE}
	\mathcal{B}_D^E =
	\left(
	\begin{array}{ccc}
	\AuE & 0 & 0 \\
	0 & \ApEs &0 \\
	0 & 0 & \tau M_{\bm w} + \tau^2\left(\frac{\alpha^2}{\zeta^2} + \frac{1}{M}\right)^{-1} A_{\bm w}
	\end{array}
	\right)^{-1}.
\end{equation}
Here,
$\AuE =  A_{ll}-A_{bl}^T D_{bb}^{-1}A_{bl}$ and
$\ApEs = \left(\frac{\alpha^2}{\zeta^2} + \frac{1}{M}\right) M_p+\alpha^2 B_b D_{bb}^{-1} B_{b}^T$.
%and
%$M_{\bm{w}}$ is the scaled mass matrices for the RT0 space.

Again, we replace the diagonal blocks by their spectrally-equivalent symmetric and positive definite approximations,
\begin{equation}\label{blk-diag-prec2E}
	\widehat{\mathcal{B}_D^E} =
	\left(	\begin{array}{ccc}
	S_{\bm u}^E & 0 & 0 \\
	0 & S_{p}^E & 0\\
	0 & 0 & S_{\bm w}
	\end{array} \right).
\end{equation}
Here, $S_{\bm u}^E$ and $S_p^E$ are spectrally equivalent to the action of the inverse of their respective diagonal blocks in,
${\mathcal{B}_D^E}$,
\begin{eqnarray}
	&c_{1, \bm{u}}^E(S_{\bm u}^E\bm{u},\bm{u}) \leq ((\AuE)^{-1} \bm{u},\bm{u}) \leq c_{2,\bm{u}}^E(S_{\bm u}^E \bm{u},\bm{u}) \label{eqn:equivalentHuE},\\
	&c_{1,p}^E(S_p^E p, p) \leq \left({(\ApEs)}^{-1} p, p\right) \leq c_{2,p}^E(S_p^E p , p) \label{eqn:equivalentHpE},
%	&c_{1,\bm{w}}(S_{\bm w}\bm{w},\bm{w}) \leq \left(\left(\tau M_{\bm w} + \tau^2c_pA_{\bm w} \right)^{-1} \bm{w},\bm{w}\right)
%		\leq c_{2,\bm{w}}(S_{\bm w} \bm{w},\bm{w}) \label{eqn:equivalentHwE},
\end{eqnarray}
where the constants $c_{1,\bm{u}}^E$, $c_{1,p}^E$,
$c_{2,\bm{u}}^E$, and $c_{2,p}^E$ are independent of discretization and physical parameters.
%The spectral equivalence relation for $S_{\bm w}$, given by (\ref{eqn:equivalentHw}) is the same as in the full bubble case.
In practice, $S_{\bm u}^E$ and
$S_{\bm w}$ can be defined similarly as in the full bubble case, and
$S_p^E$ can be defined through standard multigrid methods, as $\ApEs$ is  equivalent to a Poisson operator.
%%%%%%%%%%%%%%%%% Eliminated Case End

Since the preconditioners are derived directly from the well-posedness, they too are robust with respect to the physical and discretization parameters of the problem.
When applying the preconditioners to the bubble-eliminated formulation, though, a modest degradation in performance compared to the full bubble system is seen.
However, robustness with respect to the parameters remains. These properties are demonstrated in the numerical results section.

%%%%%%%%%%%%%%%%%%%%%%%%%%%%%%%%%%%%%%%%%%%%%%%%%%%%%%%%%%%%%%%%%%%%%%%%%%%%%
%%%%%%%%%%%%%%%%%%%%%%%%%%%%%%%%%%%%%%%%%%%%%%%%%%%%%%%%%%%%%%%%%%%%%%%%%%%%%
\subsection{Block Triangular Preconditioner}\label{sec:blocktri}
%%%%%%%%%%%%%%%%%%%%%%%%%%%%%%%%%%%%%%%%%%%%%%%%%%%%%%%%%%%%%%%%%%%%%%%%%%%%%
Next, we consider more general preconditioners, in particular,
block upper triangular and block lower triangular preconditioners for
the linear system, given by $\mathcal{A}$ or $\mathcal{A}^E$,
following the framework presented in
\cite{Adler2017,Klawonn1999,Loghin2004,Yicong2016,Starke1996} for FOV-equivalent preconditioners.
First, we define the notion of FOV equivalence as in \cite{Loghin2004}. Given a Hilbert space $X$ and its dual $X'$, a left preconditioner, $\mathcal{L}: X'\rightarrow X$, and a linear operator, $\mathcal{A}: X\rightarrow X'$, are \textit{FOV-equivalent} if, for any $x \in X$,
\begin{equation}
	\Sigma\leq\frac{\left(\mathcal{L}\mathcal{A}\bm{x},\bm{x}\right)_{\mathcal{N}^{-1}}}{\left(\bm{x},\bm{x}\right)_{\mathcal{N}^{-1}}},\
	\frac{\|\mathcal{L}\mathcal{A}\bm{x}\|_{\mathcal{N}^{-1}}}{\|\bm{x}\|_{\mathcal{N}^{-1}}}\leq\Upsilon.
\end{equation}
In general, $\mathcal{N}: X' \rightarrow X$ can be any SPD operator. Here, we choose $\mathcal{N}$ to be a SPD norm-equivalent preconditioner, and $\Sigma$ and $\Upsilon$ are positive constants\edit{, with $\Sigma \leq \Upsilon$}.  Using this definition, we have the following theorem on the convergence rate of preconditioned GMRES for solving $\mathcal{A} \bm{x} = \bm{f}$.
\begin{theorem}
	\cite{Eisenstat1983,Elman1982} If $\mathcal{A}$ and $\mathcal{L}$ are FOV-equivalent
	and $\bm{x}^m$ is the $m$-th iteration of the GMRES method preconditioned with $\mathcal{L}$, and $\bm{x}$ is the exact solution, then
	\begin{equation}
		\|\mathcal{L}\mathcal{A}(\bm{x}-\bm{x}^m)\|_{\mathcal{N}^{-1}}\leq\left(1-\frac{\Sigma^2}{\Upsilon^2}\right)^m \|\mathcal{L}\mathcal{A}(\bm{x}-\bm{x}^0)\|_{\mathcal{N}^{-1}}.
	\end{equation}
\end{theorem}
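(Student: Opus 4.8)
The plan is to reprove the classical field‑of‑values estimate of Eisenstat, Elman, and Schultz, carried out in the $\mathcal{N}^{-1}$‑inner product attached to the norm‑equivalent preconditioner. Since preconditioned GMRES is applied to $\mathcal{L}\mathcal{A}\bm{x}=\mathcal{L}\bm{f}$ and is run in the $\mathcal{N}^{-1}$‑inner product, its $m$‑th iterate minimizes the preconditioned residual $\bm{r}^m:=\mathcal{L}\mathcal{A}(\bm{x}-\bm{x}^m)$ over $\bm{x}^0+\mathcal{K}_m(\mathcal{L}\mathcal{A},\bm{r}^0)$ in the $\mathcal{N}^{-1}$‑norm. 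Writing $\Pi_m^0$ for the set of real polynomials $q$ of degree at most $m$ with $q(0)=1$, this gives
\[
\|\bm{r}^m\|_{\mathcal{N}^{-1}} = \min_{q\in\Pi_m^0}\|q(\mathcal{L}\mathcal{A})\bm{r}^0\|_{\mathcal{N}^{-1}} \le \Big(\min_{q\in\Pi_m^0}\|q(\mathcal{L}\mathcal{A})\|_{\mathcal{N}^{-1}}\Big)\,\|\bm{r}^0\|_{\mathcal{N}^{-1}},
\]
where $\|\cdot\|_{\mathcal{N}^{-1}}$ also denotes the induced operator norm. Hence it suffices to exhibit one admissible polynomial whose operator norm decays geometrically in $m$.

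Next I would establish the one‑step contraction. For $\delta>0$ and any $\bm{x}$, expanding the square gives
\[
\|(\mathcal{I}-\delta\mathcal{L}\mathcal{A})\bm{x}\|_{\mathcal{N}^{-1}}^2 = \|\bm{x}\|_{\mathcal{N}^{-1}}^2 - 2\delta\,(\mathcal{L}\mathcal{A}\bm{x},\bm{x})_{\mathcal{N}^{-1}} + \delta^2\,\|\mathcal{L}\mathcal{A}\bm{x}\|_{\mathcal{N}^{-1}}^2 .
\]
Applying the coercivity bound $\Sigma$ and the boundedness bound $\Upsilon$ from the definition of FOV‑equivalence yields $\|(\mathcal{I}-\delta\mathcal{L}\mathcal{A})\bm{x}\|_{\mathcal{N}^{-1}}^2 \le (1-2\delta\Sigma+\delta^2\Upsilon^2)\|\bm{x}\|_{\mathcal{N}^{-1}}^2$, and minimizing the scalar quadratic at $\delta^\star=\Sigma/\Upsilon^2$ produces $\|\mathcal{I}-\delta^\star\mathcal{L}\mathcal{A}\|_{\mathcal{N}^{-1}}\le(1-\Sigma^2/\Upsilon^2)^{1/2}$, a factor strictly below $1$ because $0<\Sigma\le\Upsilon$. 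Taking $q(t)=(1-\delta^\star t)^m\in\Pi_m^0$ and using submultiplicativity, $\|q(\mathcal{L}\mathcal{A})\|_{\mathcal{N}^{-1}}\le\|\mathcal{I}-\delta^\star\mathcal{L}\mathcal{A}\|_{\mathcal{N}^{-1}}^m$, and feeding this into the bound above gives the asserted geometric decay of $\|\bm{r}^m\|_{\mathcal{N}^{-1}}$; equivalently, one simply iterates the single‑step contraction $m$ times.

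Since this is a classical argument, there is no deep obstacle; the only delicate points are bookkeeping. First, the GMRES minimization property must be invoked in the correct norm: because $\mathcal{N}$ is the SPD norm‑equivalent preconditioner of Section~\ref{sec:blockdiag}, the relevant iteration is the $\mathcal{N}^{-1}$‑weighted GMRES, and it is precisely in that inner product that residual minimization — and hence the polynomial bound — is valid. Second, one must check that the constants $\Sigma$ and $\Upsilon$ appearing here are exactly those delivered by the FOV‑equivalence analysis of the block‑triangular preconditioners, which in turn rests on the parameter‑robust well‑posedness of Theorems~\ref{thm:diag_wellposed} and~\ref{thm:ElimWP}; this is what makes the convergence factor $1-\Sigma^2/\Upsilon^2$ bounded away from $1$ uniformly in $h$, $\tau$, and the physical parameters, which is the real content of the result for this application.
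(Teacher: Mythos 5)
The paper itself gives no proof of this theorem---it is quoted directly from \cite{Eisenstat1983,Elman1982}---so the only comparison available is with the classical Eisenstat--Elman--Schultz argument, and yours is exactly that argument: the GMRES residual-minimization property in the $\mathcal{N}^{-1}$-inner product combined with the one-step estimate $\|(\mathcal{I}-\delta\mathcal{L}\mathcal{A})\bm{x}\|_{\mathcal{N}^{-1}}^2\le\bigl(1-2\delta\Sigma+\delta^2\Upsilon^2\bigr)\|\bm{x}\|_{\mathcal{N}^{-1}}^2$, optimized at $\delta^\star=\Sigma/\Upsilon^2$. That machinery, and your remarks about which inner product GMRES must be run in and where $\Sigma\le\Upsilon$ comes from, are all correct.

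There is, however, a concrete mismatch at the final step. Your contraction gives $\|\mathcal{I}-\delta^\star\mathcal{L}\mathcal{A}\|_{\mathcal{N}^{-1}}\le\bigl(1-\Sigma^2/\Upsilon^2\bigr)^{1/2}$, so after $m$ steps you obtain $\|\bm{r}^m\|_{\mathcal{N}^{-1}}\le\bigl(1-\Sigma^2/\Upsilon^2\bigr)^{m/2}\|\bm{r}^0\|_{\mathcal{N}^{-1}}$, i.e.\ exponent $m/2$, not the exponent $m$ appearing in the displayed inequality of the theorem. Since $0<1-\Sigma^2/\Upsilon^2<1$, the bound with exponent $m$ is strictly stronger and does not follow from your argument; ``iterating the single-step contraction $m$ times'' again only yields $m/2$, because each step contributes a factor $\bigl(1-\Sigma^2/\Upsilon^2\bigr)^{1/2}$, not $\bigl(1-\Sigma^2/\Upsilon^2\bigr)$. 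The cited classical theorem is in fact the $m/2$ bound, so the exponent $m$ in the paper's statement should be read as a loose restatement (equivalently, as a bound on squared residual norms). You should either weaken your final claim to the $m/2$ estimate---which is all that is needed for the intended conclusion, since the contraction factor remains bounded away from $1$ uniformly in $h$, $\tau$, and the physical parameters whenever $\Sigma$ and $\Upsilon$ are parameter-independent---or state explicitly that the stronger exponent does not follow from this (or the cited) argument. As written, the sentence ``feeding this into the bound above gives the asserted geometric decay'' papers over this discrepancy.
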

If the constants $\Sigma$ and $\Upsilon$ are independent of physical and discretization parameters, then $\mathcal{L}$ is a uniform left preconditioner
for GMRES.
\begin{remark}
Similar arguments apply to right preconditioners for GMRES, which are used in practice.
A right preconditioner, $\mathcal{R}: X'\rightarrow X$, and linear operator, $\mathcal{A}: X\rightarrow X'$ are \textit{FOV-equivalent} if, for any $x' \in X'$,
\begin{equation}
	\Sigma\leq\frac{\left(\mathcal{A}\mathcal{R}\bm{x}',\bm{x}'\right)_{\mathcal{N}}}{\left(\bm{x}',\bm{x}'\right)_{\mathcal{N}}},\
	\frac{\|\mathcal{A}\mathcal{R}\bm{x}'\|_{\mathcal{N}}}{\|\bm{x}'\|_{\mathcal{N}}}\leq\Upsilon.
\end{equation}
\end{remark}
%The same method has used to develop preconditioners for Maxwell's equations \cite{AdlerHuZikatanov} and Magnetohydrodynamics\cite{Yicong2016}.

%%%%%%%%%%%%%%%%%%%%%%%%%%%%%%%%%%%%%%%%%%%%%%%%%%%%%%%%%%%%%%%%%%%%%%%%%%%%%
%%%%%%%%%%%%%%%%%%%%%%%%%%%%%%%%%%%%%%%%%%%%%%%%%%%%%%%%%%%%%%%%%%%%%%%%%%%%%

%%%%%%%%%%%%%%%%%%
\subsubsection{Full Bubble System}
%%%%%%%%%%%%%%%%%%
For the three-field formulation, we first consider the block lower triangular preconditioner,
\begin{equation}\label{eqn:blk_low_tri}
	\mathcal{B}_L = \left(\begin{array}{ccc}
	A_{\bm u} & 0 & 0 \\
	-\alpha B_{\bm u} & \left(\frac{\alpha^2}{\zeta^2} + \frac{1}{M}\right) M_p & 0 \\
	0 & \tau B_{\bm w}^T & \tau M_{\bm w} + \tau^2 \left(\frac{\alpha^2}{\zeta^2}+\frac{1}{M}\right)^{-1} A_{\bm w}
	\end{array}\right)^{-1},
\end{equation}
and the inexact block lower triangular preconditioner,
\begin{equation}\label{eqn:blk_low_tri_M}
	\widehat{\mathcal{B}_L} = \left(\begin{array}{ccc}
	S_{\bm u}^{-1} & 0 & 0 \\
	-\alpha B_{\bm u} & S_p^{-1} & 0 \\
	0 & \tau B_{\bm w}^T & S_{\bm w}^{-1}
	\end{array}\right)^{-1}.
\end{equation}
%Here %again, $A_{\bm u} = \left( \begin{array}{cc} A_{bb} & A_{bl}\\A_{lb} & A_{ll}  \end{array}\right)$,

%%%%%%%%%%%
\begin{theorem} \label{thm:BL}
	Assuming a shape regular mesh and the discretization described above,
	there exist constants $\Sigma$ and $\Upsilon$, independent of discretization and physical parameters,
	such that, for any $\bm{x}=(\bm{u},p,\bm{w})^T\neq\bm{0}$,
	\[
	\Sigma\leq\frac{\left(\mathcal{B}_L\mathcal{A}\bm{x},\bm{x}\right)_{(\mathcal{B}_D)^{-1}}}{\left(\bm{x},\bm{x}\right)_{(\mathcal{B}_D)^-1}},\
	\frac{\|\mathcal{B}_L\mathcal{A}\bm{x}\|_{(\mathcal{B}_D)^{-1}}}{\|\bm{x}\|_{(\mathcal{B}_D)^{-1}}}\leq\Upsilon.
	\]
\end{theorem}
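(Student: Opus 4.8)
The plan is to reduce everything to an explicit computation of $\mathcal{B}_L\mathcal{A}$, combined with the well-posedness of $\mathcal{A}$ and the Stokes--Biot inf-sup condition. Two observations set this up. First, $(\mathcal{B}_D)^{-1}$ is exactly the Riesz map of the weighted norm: writing out $((\mathcal{B}_D)^{-1}\bm{x},\bm{x})$ and using $(A_{\bm w}\bm{w},\bm{w})=\|B_{\bm w}\bm{w}\|_{M_p^{-1}}^2$ shows $\|\bm{x}\|_{(\mathcal{B}_D)^{-1}}^2=\vertiii{\bm{x}}^2$, so \eqref{supsup-B}--\eqref{infsup-B} are equivalent to $\|\mathcal{A}\bm{x}\|_{\mathcal{B}_D}\le\varsigma\|\bm{x}\|_{(\mathcal{B}_D)^{-1}}$ and $\|\mathcal{A}\bm{x}\|_{\mathcal{B}_D}\ge\gamma\|\bm{x}\|_{(\mathcal{B}_D)^{-1}}$. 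Second, a structural fact: the block-diagonal part of $\mathcal{B}_L^{-1}$ is precisely the matrix $(\mathcal{B}_D)^{-1}$, while the strictly block-lower-triangular part of $\mathcal{B}_L^{-1}$ coincides with that of $\mathcal{A}$. Hence the sub-diagonal blocks cancel in the product, and a direct computation gives that $\mathcal{B}_L\mathcal{A}$ is block upper triangular with $(1,1)$ and $(3,3)$ blocks equal to the identity and $(2,2)$ block equal to $(c_p^{-1}M_p)^{-1}\big(\tfrac1M M_p+\alpha^2 B_{\bm u}A_{\bm u}^{-1}B_{\bm u}^T\big)$, i.e.\ the exact pressure Schur complement premultiplied by its norm-equivalent approximation $(c_p^{-1}M_p)^{-1}$. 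Evaluating $(\mathcal{B}_L\mathcal{A}\bm{x},\bm{x})_{(\mathcal{B}_D)^{-1}}$ with $\bm{x}=(\bm{u},p,\bm{w})^T$ then yields
\[
\|\bm{u}\|_{A_{\bm u}}^2+\tfrac1M\|p\|_{M_p}^2+\alpha^2\|B_{\bm u}^Tp\|_{A_{\bm u}^{-1}}^2+\|\bm{w}\|_{\tau M_{\bm w}+\tau^2 c_p A_{\bm w}}^2+\alpha(p,B_{\bm u}\bm{u})-\tfrac{\tau c_p}{M}(p,B_{\bm w}\bm{w})-\tau\alpha^2 c_p\big(M_p^{-1}B_{\bm u}A_{\bm u}^{-1}B_{\bm u}^Tp,\,B_{\bm w}\bm{w}\big).
\]

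For the lower bound I would estimate the three cross terms by Cauchy--Schwarz using the uniform bounds $\|B_{\bm u}\bm{u}\|_{M_p^{-1}}\le\zeta^{-1}\|\bm{u}\|_{A_{\bm u}}$ (from $\zeta^2\|\ddiv\bm{u}\|_{L^2}^2\le\|\bm{u}\|_{A_{\bm u}}^2$ together with $\|B_{\bm u}\bm{u}\|_{M_p^{-1}}\le\|\ddiv\bm{u}\|_{L^2}$) and $\|B_{\bm w}\bm{w}\|_{M_p^{-1}}\le(\tau\sqrt{c_p})^{-1}\|\bm{w}\|_{\tau M_{\bm w}+\tau^2 c_p A_{\bm w}}$, plus the elementary facts $c_p\le M$ and $c_p\le\zeta^2/\alpha^2$. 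A Young's inequality then reduces positivity to the positive-definiteness of a fixed $3\times 3$ quadratic form in $\big(\|\bm{u}\|_{A_{\bm u}},\ \alpha\|B_{\bm u}^Tp\|_{A_{\bm u}^{-1}},\ \|\bm{w}\|_{\tau M_{\bm w}+\tau^2 c_p A_{\bm w}}\big)$, leaving a remainder $\ge c_*\big(\|\bm{u}\|_{A_{\bm u}}^2+\alpha^2\|B_{\bm u}^Tp\|_{A_{\bm u}^{-1}}^2+\|\bm{w}\|_{\tau M_{\bm w}+\tau^2 c_p A_{\bm w}}^2\big)+\tfrac1{2M}\|p\|_{M_p}^2$ with $c_*>0$ universal. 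Finally I would invoke Stokes--Biot stability in the form of the discrete inf-sup $\|B_{\bm u}^Tp\|_{A_{\bm u}^{-1}}\ge\beta\zeta^{-1}\|p\|_{M_p}$ (obtained by testing with the Stokes inf-sup function and bounding its $A_{\bm u}$-norm by $\sqrt d\,\zeta\|p\|_{M_p}$), so that $\alpha^2\|B_{\bm u}^Tp\|_{A_{\bm u}^{-1}}^2+\tfrac1M\|p\|_{M_p}^2\gtrsim(\tfrac{\alpha^2}{\zeta^2}+\tfrac1M)\|p\|_{M_p}^2=c_p^{-1}\|p\|_{M_p}^2$; combining gives $(\mathcal{B}_L\mathcal{A}\bm{x},\bm{x})_{(\mathcal{B}_D)^{-1}}\ge\Sigma\|\bm{x}\|_{(\mathcal{B}_D)^{-1}}^2$ with $\Sigma$ independent of the parameters. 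The upper bound is then routine: $\|\mathcal{B}_L\mathcal{A}\bm{x}\|_{(\mathcal{B}_D)^{-1}}\le\|\mathcal{B}_L\|\,\|\mathcal{A}\bm{x}\|_{\mathcal{B}_D}\le\|\mathcal{B}_L\|\,\varsigma\|\bm{x}\|_{(\mathcal{B}_D)^{-1}}$, and $\|\mathcal{B}_L\|$ is bounded by a universal constant via the Neumann series $(I+\mathcal{L})^{-1}=I-\mathcal{L}+\mathcal{L}^2$ for the nilpotent scaled strictly-lower-triangular part $\mathcal{L}$, whose norm is controlled by the same coupling estimates.

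I expect the lower bound to be the main obstacle. Since $\mathcal{A}$ has only a positive semidefinite symmetric part (degenerating in the incompressible limit $M\to\infty$) and $\mathcal{B}_L\mathcal{A}$ is genuinely far from the identity when the coupling is strong, positivity cannot come from a perturbation-of-identity argument; it must be extracted from the assembled quadratic form, and this closes only because the weight $c_p$ in $\mathcal{B}_D$ is tuned so that the pressure diagonal block simultaneously dominates the correction $\tfrac{\alpha^2}{\zeta^2}M_p$, the $\bm{u}$--$p$ coupling, and (through the $\bm{w}$-norm) the $p$--$\bm{w}$ coupling. Consequently the constant chasing in the Young's inequalities is essential rather than cosmetic, and the Stokes--Biot stability of $(\bm{V}_h,\bm{W}_h,Q_h)$ enters through its inf-sup, not merely through the well-posedness of $\mathcal{A}$.
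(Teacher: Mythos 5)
Your proposal is correct and follows essentially the same route as the paper, which proves this theorem as the special case of Theorem~\ref{thm:BL_elim} obtained by replacing $\AuE,\BuE,\ApE,\ApEs$ with $A_{\bm u},B_{\bm u},\frac{1}{M}M_p,c_p^{-1}M_p$: expand the quadratic form $(\mathcal{B}_L\mathcal{A}\bm{x},\bm{x})_{(\mathcal{B}_D)^{-1}}$, bound the cross terms via $\|B_{\bm u}\bm{v}\|_{M_p^{-1}}\leq\zeta^{-1}\|\bm{v}\|_{A_{\bm u}}$ and $c_p\leq\min\{M,\zeta^2/\alpha^2\}$, reduce positivity to a small fixed SPD matrix, and recover the full pressure norm through the Stokes inf-sup (Lemma~\ref{lem:stokesP}), with the upper bound by continuity. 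One side remark is inaccurate but harmless: $\mathcal{B}_L\mathcal{A}$ is not block upper triangular, since $c_p^{-1}M_p$ is only an approximation of the exact pressure Schur complement and hence the $(3,2)$ block does not vanish; however, the antisymmetric $\pm\tau B_{\bm w}$ couplings cancel in the quadratic form, and the expression you display is exactly the correct one, so the argument goes through unchanged.
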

%%%%%%%%%%%
\begin{theorem} \label{thm:BL_inexact}
	Assuming the spectral equivalence relations
	(\ref{eqn:equivalentHu}) and (\ref{eqn:equivalentHw}) hold, $\| I-S_u A_{\bm u} \|_{A_{\bm u}} \leq \rho \leq 0.2228$,
	and $S_p = \left(\frac{\alpha^2}{\zeta^2} + \frac{1}{M}\right)^{-1} M_p^{-1}$,
	then there exists constants $\Sigma$ and $\Upsilon$, independent of discretization and physical parameters,
	such that, for any $\bm{x}=(\bm{u},p,\bm{w})^T\neq\bm{0}$,
	\[
	\Sigma\leq\frac{\left(\widehat{\mathcal{B}_L}\mathcal{A} \bm{x},\bm{x}\right)_{(\widehat{\mathcal{B}_D})^{-1}}}
		{\left(\bm{x},\bm{x}\right)_{(\widehat{\mathcal{B}_D})^-1}},\
	\frac{\|\widehat{\mathcal{B}_L}\mathcal{A} \bm{x}\|_{(\widehat{\mathcal{B}_D})^{-1}}}{\|\bm{x}\|_{(\widehat{\mathcal{B}_D})^{-1}}}\leq\Upsilon.
	\]
\end{theorem}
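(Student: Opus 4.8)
The strategy is to deduce the inexact estimate from the exact one in Theorem~\ref{thm:BL} by a \emph{localized} perturbation argument. Passing to the scaled variable $\bm{y}=\widehat{\mathcal{B}_D}^{-1/2}\bm{x}$, the two asserted inequalities are equivalent to saying that $\widehat{T}:=\widehat{\mathcal{B}_D}^{-1/2}\,\widehat{\mathcal{B}_L}\mathcal{A}\,\widehat{\mathcal{B}_D}^{1/2}$ has symmetric part bounded below by $\Sigma I$ and spectral norm bounded above by $\Upsilon$; Theorem~\ref{thm:BL} asserts the same for $T:=\mathcal{B}_D^{-1/2}\mathcal{B}_L\mathcal{A}\,\mathcal{B}_D^{1/2}$ with fixed, parameter-independent constants $\Sigma_0,\Upsilon_0$, so it suffices to compare $\widehat{T}$ with $T$. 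The key structural fact is that $\widehat{\mathcal{B}_L}^{-1}$ and $\mathcal{B}_L^{-1}$ have identical strictly block-lower-triangular parts and, because $S_p$ is the \emph{exact} inverse of $(\tfrac{\alpha^2}{\zeta^2}+\tfrac1M)M_p$, identical $p$-diagonal block; hence $\widehat{\mathcal{B}_L}^{-1}-\mathcal{B}_L^{-1}=\widehat{\mathcal{B}_D}^{-1}-\mathcal{B}_D^{-1}=\mathrm{diag}\big(S_{\bm u}^{-1}-A_{\bm u},\,0,\,S_{\bm w}^{-1}-(\tau M_{\bm w}+\tau^2 c_p A_{\bm w})\big)$, so all inexactness is confined to the $\bm u$- and $\bm w$-diagonal blocks and appears consistently in both the preconditioner and the norm operator.

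I would treat the $\bm w$- and $\bm u$-blocks by different mechanisms, matching the two kinds of hypotheses. For the $\bm w$-block only the spectral equivalence \eqref{eqn:equivalentHw} is available, so the substitution $(\tau M_{\bm w}+\tau^2 c_p A_{\bm w})^{-1}\leftrightarrow S_{\bm w}$ is \emph{not} a small perturbation. However, the $\bm w$-block sits in the bottom-right corner of the block-lower-triangular structure, so (as one sees from the forward substitution defining $\widehat{\mathcal{B}_L}$) its inexactness does not propagate to the $\bm u$- and $p$-components; after eliminating $\bm u$ and $p$ it amounts to a spectrally-equivalent substitution in the last block alone. I would therefore first establish an intermediate FOV-equivalence for the operator $\mathcal{B}_L^{(w)},\mathcal{B}_D^{(w)}$ obtained from $\mathcal{B}_L,\mathcal{B}_D$ by replacing \emph{only} the exact $\bm w$-block by $S_{\bm w}$: its constants differ from $\Sigma_0,\Upsilon_0$ only by factors depending on $c_{1,\bm w},c_{2,\bm w}$, and, crucially, the lower bound stays strictly positive. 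This step is essentially bookkeeping with the spectral-equivalence constants.

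The remaining and delicate step is the $\bm u$-block, where the stronger hypothesis $\|I-S_{\bm u}A_{\bm u}\|_{A_{\bm u}}\le\rho$ is used, i.e. $A_{\bm u}^{1/2}S_{\bm u}A_{\bm u}^{1/2}$ has spectrum in $[1-\rho,1+\rho]$, so that $S_{\bm u}^{-1}-A_{\bm u}$ is genuinely small relative to $A_{\bm u}$. Mere spectral equivalence would not do here: the $\bm u$-block is at the top of the triangular elimination, so its error reaches all three components through the coupling block $-\alpha B_{\bm u}$. Writing $\widehat{T}=T_{\bm w}+\mathcal{E}$, where $T_{\bm w}$ is the scaled intermediate operator of the previous paragraph and $\mathcal{E}$ collects the $\bm u$-perturbation, I would bound $\|\mathcal{E}\|$ and $\lambda_{\min}(\mathrm{sym}\,\mathcal{E})$ from below by explicit expressions in $\rho$, using Cauchy--Schwarz/Young on the cross terms generated by $-\alpha B_{\bm u}$ together with the $A_{\bm u}$-boundedness of $I-S_{\bm u}A_{\bm u}$, and — essentially — the cancellation of $\alpha,\zeta,M,\tau$ among $\mathcal{A}$, $\mathcal{B}_L$ and $\mathcal{B}_D$, which is precisely what makes the final bound depend only on $\rho$. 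Requiring that the intermediate lower bound dominate these perturbation terms yields a scalar inequality in $\rho$ — quadratic rather than linear, since the perturbation of the $\bm u$-diagonal and of the coupling enter jointly — whose admissible range is $\rho\le 0.2228$ (the largest root of that inequality). I expect this last quantitative step to be the main obstacle: carrying every constant through the triangular bookkeeping while keeping it parameter-independent, and extracting the sharp threshold rather than a conservative one.
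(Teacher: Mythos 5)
Your structural preliminaries are essentially correct: passing to $\widehat{T}=\widehat{\mathcal{B}_D}^{-1/2}\widehat{\mathcal{B}_L}\mathcal{A}\widehat{\mathcal{B}_D}^{1/2}$ is the right reformulation, the inexactness is indeed confined to the $\bm u$- and $\bm w$-diagonal blocks, and your treatment of the $\bm w$-block is fine (in fact $\widehat{\mathcal{B}_D}^{-1}\widehat{\mathcal{B}_L}$ does not involve $S_{\bm w}$ at all, so replacing the exact $\bm w$-block only rescales the norms through \eqref{eqn:equivalentHw}). Note, however, that this is not the paper's route: the paper proves this theorem as the special case $\beta=0$ (since $S_p$ is exact here) of the direct computation in Theorem~\ref{thm:BL_elim_inexact} — expand $(\widehat{\mathcal{B}_L}\mathcal{A}\bm{x},\bm{x})_{\widehat{\mathcal{B}_D}^{-1}}$, estimate with Cauchy--Schwarz and the bounds of Lemma~\ref{lem:stokesP}/Lemma~\ref{lemma:div_leq_A}, and reduce the lower bound to positive definiteness of the explicit matrix $\mathcal{Q}(\rho,\beta)$ acting on the intermediate quantities $\|\bm u\|_{A_{\bm u}}$, $\alpha\|B_{\bm u}^Tp\|_{S_{\bm u}}$, $\|p\|$, $\tau\|B_{\bm w}\bm w\|$, $\sqrt{\tau}\|\bm w\|_{M_{\bm w}}$. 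The number $0.2228$ is precisely the SPD threshold of $\mathcal{Q}(\rho,0)$; the Stokes inf-sup constant $\gamma_B$ and the equivalence constants $\eta$, $c_{1,\bm w}$, $c_{2,\bm w}$ enter only \emph{after} that step, in the size of $\Sigma$, never in the admissible range of $\rho$.

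This is where your proposal has a genuine gap. Your final step dominates an $O(\rho)$ perturbation by the exact-case lower bound of Theorem~\ref{thm:BL} (modified by the $S_{\bm w}$ substitution), so the scalar inequality you obtain admits $\rho$ up to roughly $\Sigma_0/C$, where $\Sigma_0$ already contains the factor $\min\{1,\gamma_B^2/\eta^2\}$ and, after your intermediate step, the constants of \eqref{eqn:equivalentHw}. These constants are independent of $h$, $\tau$ and the physical parameters, but they are not universal numbers — $\gamma_B$ may be small — so the largest root of your inequality depends on $\gamma_B$, $\eta$, $c_{1,\bm w}$, $c_{2,\bm w}$ and cannot be identified with the stated threshold $0.2228$. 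Consequently your argument proves only a ``$\rho$ sufficiently small (depending on the inf-sup and equivalence constants)'' version, not the theorem under the hypothesis $\rho\le 0.2228$. To recover the stated numerical threshold you must inject the perturbation \emph{inside} the quadratic-form estimate, i.e.\ into the entries of $\mathcal{Q}$, before Lemma~\ref{lem:stokesP} is invoked — which is exactly what the paper's direct computation does, and why it cannot be recovered by perturbing the already-assembled bound of Theorem~\ref{thm:BL}.
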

The proofs of the above two theorems turn out to be a special case of the proofs for the bubble-eliminated system (shown below), and thus are omitted here.

%%%%%%%%%%% Block Upper
Similar arguments can also be applied to block upper triangular
preconditioners.  We consider the following for $\mathcal{A}$ in \eqref{block_form},
\begin{equation}\label{eqn:blk_up_triF}
	\mathcal{B}_U = \left(\begin{array}{ccc}
	A_{\bm u} & \alpha B_{\bm u}^T & 0 \\
	0 & \left(\frac{\alpha^2}{\zeta^2} + \frac{1}{M}\right) M_p & -\tau B_{\bm w}  \\
	0 & 0 & \tau M_{\bm w} + \tau^2 \left(\frac{\alpha^2}{\zeta^2}+\frac{1}{M}\right)^{-1} A_{\bm w}
	\end{array}\right)^{-1},
\end{equation}
and the corresponding inexact preconditioner,
\begin{equation}\label{eqn:blk_up_tri_MF}
	\widehat{\mathcal{B}_U} = \left(\begin{array}{ccc}
	S_{\bm u}^{-1} & \alpha B_{\bm u}^T & 0 \\
	0 & S_{p}^{-1} & -\tau B_{\bm w} \\
	0 & 0 & S_{\bm w}^{-1}
	\end{array}\right)^{-1}.
\end{equation}
Parameter robustness for the block upper
triangular preconditioners is summarized in the following theorems.  Again, as these results are special cases of those in the following section, we only state the results here.
\begin{theorem}
	Assuming a shape regular mesh and the discretization described above,
	there exist constants $\Sigma$ and $\Upsilon$, independent of discretization and physical parameters,
	such that, for any $\bm{x}' = \mathcal{B}_U^{-1}\bm{x}$ with $\bm{x}=(\bm{u},p,\bm{w})^T\neq\bm{0}$,
	\[
	\Sigma\leq\frac{\left(\mathcal{A}\mathcal{B}_U\bm{x}',\bm{x}'\right)_{(\mathcal{B}_D)}}{\left(\bm{x}',\bm{x}'\right)_{(\mathcal{B}_D)}},\
	\frac{\|\mathcal{A}\mathcal{B}_U\bm{x}'\|_{(\mathcal{B}_D)}}{\|\bm{x}'\|_{(\mathcal{B}_D)}}\leq\Upsilon.
	\]
\end{theorem}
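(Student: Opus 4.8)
The plan is to verify the two field-of-value inequalities directly, following the template used for the block lower triangular preconditioner in Theorem~\ref{thm:BL}; in fact the cleanest route is to prove the analogue for the bubble-eliminated operator $\mathcal A^E$ and read off this theorem as the special case in which no bubble elimination is performed (which is why the paper presents it this way). The ingredients I would use are: (i) the well-posedness of $\mathcal A$ in the norm $\vertiii{\cdot}$, i.e.\ \eqref{supsup-B}--\eqref{infsup-B}; (ii) the identity $\vertiii{\bm x}^2=(\mathcal B_D^{-1}\bm x,\bm x)$, which makes the $\mathcal B_D$- and $\mathcal B_D^{-1}$-inner products coincide with the norms in the well-posedness bounds; and (iii) the elementary estimates $\alpha^2\|B_{\bm u}^{T}p_h\|_{A_{\bm u}^{-1}}^{2}\le\tfrac{\alpha^2}{\zeta^2}\|p_h\|_{M_p}^{2}\le\|p_h\|_{c_p^{-1}M_p}^{2}$ and $\tau^2\|B_{\bm w}\bm w_h\|_{M_p^{-1}}^{2}=\tau^2(A_{\bm w}\bm w_h,\bm w_h)$, which say that the coupling blocks $\alpha B_{\bm u}^{T}$ and $\tau B_{\bm w}$ appearing off the diagonal of $\mathcal B_U^{-1}$ are dominated by the correspondingly scaled diagonal blocks of $\mathcal B_D^{-1}$. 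Since $\bm x=\mathcal B_U\bm x'$ ranges over all nonzero vectors and $\mathcal A\mathcal B_U\bm x'=\mathcal A\bm x$, I would first rewrite both ratios in terms of $\bm x$: the numerator of the first is $(\mathcal A\bm x,\mathcal B_U^{-1}\bm x)_{\mathcal B_D}$, and both denominators equal (a power of) $\|\mathcal B_U^{-1}\bm x\|_{\mathcal B_D}=\|\bm x'\|_{\mathcal B_D}$.

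\noindent\textbf{Upper bound.} Writing $\|\mathcal A\bm x\|_{\mathcal B_D}=\sup_{\bm y\neq\bm 0}(\mathcal A\bm x,\bm y)/\|\bm y\|_{\mathcal B_D^{-1}}$, the continuity bound \eqref{supsup-B} gives $\|\mathcal A\bm x\|_{\mathcal B_D}\le\varsigma\,\vertiii{\bm x}=\varsigma\,\|\mathcal B_U\bm x'\|_{\mathcal B_D^{-1}}$. It then suffices to show $\|\mathcal B_U\bm x'\|_{\mathcal B_D^{-1}}\le C\|\bm x'\|_{\mathcal B_D}$: splitting $\mathcal B_U^{-1}=\mathcal B_D^{-1}+\mathcal N$ with $\mathcal N$ strictly block upper triangular, the symmetrized operator is $\mathcal B_D^{-1/2}\mathcal B_U\mathcal B_D^{-1/2}=(I+\widetilde{\mathcal N})^{-1}=I-\widetilde{\mathcal N}+\widetilde{\mathcal N}^{2}$, where $\widetilde{\mathcal N}=\mathcal B_D^{1/2}\mathcal N\mathcal B_D^{1/2}$ is nilpotent of index $3$ and each of its two nonzero blocks has norm $\le 1$ by (iii); hence $C$ is an absolute constant and one may take $\Upsilon=C\varsigma$.

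\noindent\textbf{Lower bound.} From $\mathcal A=\mathcal B_U^{-1}-\mathcal E$ one gets $\mathcal A\mathcal B_U=I-\mathcal E\mathcal B_U$, where a short computation gives
\[
\mathcal E \;=\; \mathcal B_U^{-1}-\mathcal A \;=\;
\begin{pmatrix} 0 & 0 & 0\\ \alpha B_{\bm u} & \tfrac{\alpha^2}{\zeta^2}M_p & 0\\ 0 & -\tau B_{\bm w}^{T} & \tau^{2}c_p A_{\bm w}\end{pmatrix}
\;=\; \diag\!\big(0,\tfrac{\alpha^2}{\zeta^2}M_p,\tau^2 c_p A_{\bm w}\big)\;-\;(\text{strict lower part of }\mathcal A).
\]
Conjugating $\mathcal A\mathcal B_U$ into the $\mathcal B_D$-inner product (symmetrizing by $\mathcal B_D^{\pm1/2}$) reduces the lower bound to $(\widehat{\mathcal A}\bm v,\bm v)\ge\Sigma\|\bm v\|^2$ for $\widehat{\mathcal A}=I-(\widetilde{\mathcal S}+\widetilde{\mathcal N}^{T})(I+\widetilde{\mathcal N})^{-1}$, where $\widetilde{\mathcal S}=\diag(0,\tfrac{\alpha^2 c_p}{\zeta^2}I,\Theta)$ is the scaled stabilization (symmetric, $0\le\widetilde{\mathcal S}<I$) and the skew coupling of $\mathcal A$ makes the scaled strict-lower part equal to $-\widetilde{\mathcal N}^{T}$. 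One then expands $(\widehat{\mathcal A}\bm v,\bm v)$ and controls the indefinite cross terms by Young's inequality, using the Stokes--Biot inf-sup condition — with its $\zeta^{-1}$-scaled pressure norm and the companion $\tau$-scaled flux estimate — to provide a lower bound on $\|\widetilde{\mathcal N}\bm v\|$ along the directions on which $I-\widetilde{\mathcal S}$ degenerates. For instance, testing with $\bm v=(0,\bm v_p,0)$ yields exactly $(\widehat{\mathcal A}\bm v,\bm v)=(1-\tfrac{\alpha^2 c_p}{\zeta^2})\|\bm v_p\|^2+\|\widetilde{\mathcal N}\bm v\|^2$, and the inf-sup lower bound on the second term compensates the vanishing of the first as $M\to\infty$; the general case follows along the same lines.

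\noindent\textbf{Main obstacle.} The delicate step is the lower bound. The positive part $I-\widetilde{\mathcal S}$ of $\widehat{\mathcal A}$ degenerates precisely in the near-incompressible ($M\to\infty$) and large-time-step ($\tau\to\infty$) limits, so coercivity has to be recovered on those directions through the coupling blocks, which forces one to keep meticulous track of the weights $c_p,\zeta,\tau$ so that each occurrence of $B_{\bm u}$ or $B_{\bm w}$ is matched against the correctly scaled diagonal block; only then does the inf-sup constant from \eqref{infsup-B} enter and yield a $\Sigma>0$ independent of the physical and discretization parameters. This bookkeeping is what makes the full argument technical, and is the reason I would ultimately carry it out once in the more general bubble-eliminated setting and specialize.
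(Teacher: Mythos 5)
Your proposal is sound and, at the level of strategy, matches the paper's: the paper gives no separate proof of this theorem, stating only that the full-bubble triangular results are special cases of the bubble-eliminated ones, whose upper-triangular versions are in turn ``similar in concept'' to Theorem~\ref{thm:BL_elim}; your plan to prove the $\mathcal{A}^E$ analogue and specialize is exactly that pointer made explicit. Where you differ is in packaging. The paper's template (the proof of Theorem~\ref{thm:BL_elim}) expands the bilinear form term by term, applies Cauchy--Schwarz together with the scaling inequalities of Lemma~\ref{lemma:div_leq_A}, \eqref{eqn:Bu_leq_Au}, \eqref{eqn:Aps_leq_Mp}--\eqref{eqn:Aps_leq_Ap}, reduces the lower bound to positive definiteness of a fixed $5\times5$ numerical matrix, and only then invokes the Stokes--Biot inf-sup condition (Lemma~\ref{lem:stokesP}, \eqref{eqn:Bp_to_p_elim}) to restore the missing part of the weighted pressure norm. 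You instead symmetrize by $\mathcal{B}_D^{\pm1/2}$, write $\mathcal{A}\mathcal{B}_U=I-\mathcal{E}\mathcal{B}_U$, and exploit nilpotency of the scaled coupling $\widetilde{\mathcal{N}}$; your identification of $\mathcal{E}$ is correct (note $c_p^{-1}-\tfrac1M=\tfrac{\alpha^2}{\zeta^2}$), the block bounds $\|\widetilde{\mathcal{N}}_{12}\|\le1$, $\|\widetilde{\mathcal{N}}_{23}\|\le1$ follow from Lemma~\ref{lemma:div_leq_A} and the definition of $A_{\bm w}$, so the upper bound via $(I+\widetilde{\mathcal{N}})^{-1}=I-\widetilde{\mathcal{N}}+\widetilde{\mathcal{N}}^2$ is fine, and your test computation with $\bm{v}=(0,\bm{v}_p,0)$ does produce the parameter-free constant $\min\{1,\gamma_B^2\}$ once the inf-sup bound is inserted. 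Both routes thus rest on the same ingredients -- continuity \eqref{supsup-B}, the $\zeta$-weighted divergence estimate, Young's inequality, and the inf-sup condition compensating the $1/M\to0$ degeneracy -- so the difference is organizational rather than substantive; your symmetrized form is tidier for the upper bound, while the paper's direct expansion makes the parameter bookkeeping mechanical. The one caveat is that your general-case lower bound is left at ``follows along the same lines'': to have a complete proof you still need the analogue of the paper's fixed small SPD matrix, i.e.\ a uniform coercivity estimate for $I-(\widetilde{\mathcal{S}}+\widetilde{\mathcal{N}}^T)(I+\widetilde{\mathcal{N}})^{-1}$ on all of $\bm{X}_h$, not just the coordinate directions you test; this is the bookkeeping you correctly flag as the delicate step, and since the paper omits the proof of this statement entirely, your sketch is no less complete than the source, but it is not yet self-contained.
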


\begin{theorem}
	Assuming the spectral equivalence relations
	(\ref{eqn:equivalentHu}) and (\ref{eqn:equivalentHw}) hold,
	$\| I-A_{\bm u}S_{\bm u}\|_{A_{\bm u}}\leq\rho\leq 0.2228$,
	and $S_p = \left(\frac{\alpha^2}{\zeta^2} + \frac{1}{M}\right)^{-1} M_p^{-1}$, then
	there exists constants $\Sigma$ and $\Upsilon$, independent of discretization and physical parameters,
	such that, for any $\bm{x}' = \mathcal{B}_U^{-1}\bm{x}$ with $\bm{x}=(\bm{u},p,\bm{w})^T\neq\bm{0}$,
	\[
	\Sigma\leq\frac{\left(\mathcal{A}\widehat{\mathcal{B}_U}\bm{x}',\bm{x}'\right)_{(\widehat{\mathcal{B}_D})}}
		{\left(\bm{x}',\bm{x}'\right)_{(\widehat{\mathcal{B}_D})}},\
	\frac{\|\mathcal{A}\widehat{\mathcal{B}_U}\bm{x}'\|_{(\widehat{\mathcal{B}_D})}}
		{\|\bm{x}'\|_{(\widehat{\mathcal{B}_D})}}\leq\Upsilon.
	\]
\end{theorem}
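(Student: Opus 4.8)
The plan is to establish FOV-equivalence of the inexact upper-triangular preconditioner $\widehat{\mathcal{B}_U}$ with $\mathcal{A}$ in the $\widehat{\mathcal{B}_D}$-inner product, reducing as much as possible to the exact case and then controlling the perturbation introduced by the approximate blocks $S_{\bm u}$, $S_{\bm w}$, $S_p$. First I would write $\widehat{\mathcal{B}_U}^{-1} = \widehat{\mathcal{B}_D}^{-1} + \mathcal{U}$, where $\mathcal{U}$ collects the strictly upper-triangular coupling blocks $\alpha B_{\bm u}^T$ and $-\tau B_{\bm w}$, and similarly decompose $\mathcal{A} = \mathcal{A}_{\text{sym}} + \mathcal{A}_{\text{skew}}$ relative to $\widehat{\mathcal{B}_D}$. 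The key observation is that the off-diagonal blocks of $\mathcal{A}$ are exactly (up to the sign pattern of the skew part) those appearing in $\widehat{\mathcal{B}_U}^{-1}$, so that $\mathcal{A}\widehat{\mathcal{B}_U}$ differs from a block-diagonal-like operator by terms that are controlled by (i) the norm-equivalence constants $c_{1,\bm u}, c_{2,\bm u}, c_{1,\bm w}, c_{2,\bm w}$ from \eqref{eqn:equivalentHu} and \eqref{eqn:equivalentHw}, (ii) the exactness $S_p = \left(\frac{\alpha^2}{\zeta^2}+\frac{1}{M}\right)^{-1}M_p^{-1}$ of the pressure block, and (iii) the contraction bound $\|I - A_{\bm u}S_{\bm u}\|_{A_{\bm u}} \le \rho \le 0.2228$.

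The main steps, in order: (1) Pass to the $\widehat{\mathcal{B}_D}$-orthonormal-ish scaling so that all blocks are measured against the weighted norm $\|\cdot\|_{\mathcal{B}_D}$ — here I would use the Stokes-Biot inf-sup and continuity bounds \eqref{supsup-B}, \eqref{infsup-B} to control the inf-sup constant of the coupling blocks $B_{\bm u}$ and $B_{\bm w}$ against the weighted norms, which is exactly what makes the scaled off-diagonal blocks have $O(1)$ norm independent of parameters. (2) Treat the exact preconditioner $\mathcal{B}_U$ first: compute $\mathcal{A}\mathcal{B}_U$ in block form and show its $\mathcal{B}_D$-coercivity and $\mathcal{B}_D$-boundedness by a Schur-complement / energy argument, using that the $(3,3)$ block $\tau M_{\bm w} + \tau^2 c_p A_{\bm w}$ was precisely chosen to dominate the Schur complement of the pressure–velocity subsystem. (3) Introduce the inexactness: write $\widehat{\mathcal{B}_U} = \mathcal{B}_U + \mathcal{E}$ where $\mathcal{E}$ is built from $S_{\bm u} - A_{\bm u}^{-1}$ and $S_{\bm w} - (\tau M_{\bm w} + \tau^2 c_p A_{\bm w})^{-1}$ (the pressure block contributing nothing since $S_p$ is exact), bound $\|\mathcal{A}\mathcal{E}\|$ in the relevant norm by a constant times $\rho$ plus the spectral-gap constants, and absorb it via a perturbation argument (e.g. requiring the combined perturbation to be strictly less than the coercivity constant of the exact case — this is where the explicit threshold $0.2228$ enters, chosen so the quadratic form stays positive). (4) Assemble $\Sigma$ and $\Upsilon$ from the constants accumulated in (2)–(3), verify $\Sigma \le \Upsilon$ and that both are parameter-independent, and note the reduction from the bubble-eliminated case: the present statement is the special case in which $A_{bl} = 0$, $B_b = 0$ so $\mathcal{D}^E$ degenerates to $\mathcal{B}_D$ and $\mathcal{A}^E$ to $\mathcal{A}$ — hence one may simply cite the bubble-eliminated proof.

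The hard part will be step (3): tracking the perturbation $\mathcal{E}$ through the \emph{cross} terms in $\mathcal{A}\widehat{\mathcal{B}_U}$, because the inexact displacement solve $S_{\bm u}$ contaminates not only the $(1,1)$ block but also, via the coupling $\alpha B_{\bm u}^T$, the $(1,2)$ entry, and likewise $S_{\bm w}$ feeds back through $-\tau B_{\bm w}$; one must show these contaminations remain bounded by $\rho$ uniformly in $\alpha, \lambda, \mu, M, \tau, h$. The mechanism is that each such cross term carries a factor of a scaled $B$-block whose $\mathcal{B}_D$-to-$\mathcal{B}_D$ norm is bounded by the Stokes-Biot constants, so the contamination is $O(\rho)$ with an $O(1)$ prefactor — but making this rigorous requires the careful weighted-norm bookkeeping of \eqref{weighted-norm}, and it is the reason the explicit constant $0.2228$ (rather than merely "$\rho < 1$") is needed. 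A secondary technical point is ensuring the argument is uniform as $\tau \to \infty$, where $\tau M_{\bm w} + \tau^2 c_p A_{\bm w}$ becomes nearly singular on $\ker B_{\bm w}^T$; this is handled by the fact that $S_{\bm w}$ is assumed spectrally equivalent to the \emph{exact} inverse of that degenerate block, so no additional $\tau$-dependence is introduced.
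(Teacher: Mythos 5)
Your closing remark---that this theorem is the full-bubble specialization of the bubble-eliminated inexact result, obtainable by running that proof with $A_{\bm u}$, $B_{\bm u}$ and $\left(\frac{\alpha^2}{\zeta^2}+\frac{1}{M}\right)M_p$ in place of $\AuE$, $\BuE$, $\ApEs$---is indeed how the paper handles it: the proof is omitted as a special case of the (upper-triangular analogue of) Theorem~\ref{thm:BL_elim_inexact}. However, the argument you actually propose to carry out has a genuine gap in step (3). You plan to prove the exact case for $\mathcal{B}_U$ and then absorb the inexactness via $\widehat{\mathcal{B}_U}=\mathcal{B}_U+\mathcal{E}$ with $\|\mathcal{A}\mathcal{E}\|$ required to be smaller than the exact coercivity constant. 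But $S_{\bm w}$ is only assumed spectrally equivalent to $\left(\tau M_{\bm w}+\tau^2 c_p A_{\bm w}\right)^{-1}$, with constants $c_{1,\bm{w}},c_{2,\bm{w}}$ that are parameter-independent yet in no way close to one; hence $\mathcal{E}$ contains an $O(1)$ relative perturbation in the $\bm w$-block and cannot be made ``strictly less than the coercivity constant.'' Your comment that the $\tau$-degeneracy is ``handled by the fact that $S_{\bm w}$ is spectrally equivalent'' is true in the correct proof but not in yours: equivalence suffices only because, in the direct computation of $\left(\mathcal{A}\widehat{\mathcal{B}_U}\bm{x}',\bm{x}'\right)_{\widehat{\mathcal{B}_D}}$, the factors of $S_{\bm w}$ cancel identically in the numerator (writing $\bm{x}'=\widehat{\mathcal{B}_U}^{-1}\bm{z}$, the third contribution is $(S_{\bm w}(\mathcal{A}\bm{z})_3,S_{\bm w}^{-1}\bm{z}_3)=((\mathcal{A}\bm{z})_3,\bm{z}_3)$), so $S_{\bm w}$ enters only through the denominator norm, where \eqref{eqn:equivalentHw} alone is needed. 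A smallness-based absorption never sees this cancellation.

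A second, related difficulty: field-of-values bounds for a nonsymmetric preconditioned operator are not stable under replacing the inner product $\mathcal{B}_D$ by the merely spectrally equivalent $\widehat{\mathcal{B}_D}$, because the lower bound involves sign-indefinite cross terms; coercivity in one inner product plus norm equivalence does not transfer. This is exactly why the hypotheses single out the displacement block for a contraction estimate $\|I-A_{\bm u}S_{\bm u}\|_{A_{\bm u}}\leq\rho$ and take $S_p$ exact, while $S_{\bm w}$ needs only \eqref{eqn:equivalentHw}. The paper's proof therefore redoes the whole energy estimate directly in the inexact inner product (as in Theorem~\ref{thm:BL_elim_inexact}): expand, cancel, apply Lemma~\ref{lem:stokesP} and the full-bubble analogue of \eqref{eqn:Bu_leq_Au} (Lemma~\ref{lemma:div_leq_A}), and bound below by a quadratic form whose small coefficient matrix, with $\beta=0$ since $S_p$ is exact, is SPD precisely for $\rho\leq 0.2228$; that is where the explicit threshold comes from, not from ``perturbation smaller than coercivity constant.'' Finally, the reduction is not literally ``$A_{bl}=0$, $B_b=0$'' (those blocks are nonzero in $\mathcal{A}$); rather, the same chain of estimates is repeated with $A_{\bm u}$, $B_{\bm u}$, and $\left(\frac{\alpha^2}{\zeta^2}+\frac{1}{M}\right)M_p$ playing the roles of $\AuE$, $\BuE$, $\ApEs$, which is the sense in which the full-bubble theorems are special cases.
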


\subsubsection{Bubble-Eliminated System}
%%%%%%%%%%%%%%%%%%%%%%
For the three-field formulation, we consider the block lower triangular preconditioner,
\begin{equation}\label{eqn:blk_low_triE}
	\mathcal{B}_L^E = \left(\begin{array}{ccc}
	\AuE & 0 & 0 \\
	-\alpha \BuE & \ApEs & 0 \\
	0 & \tau B_{\bm w}^T & \tau M_{\bm w} + \tau^2 \left(\frac{\alpha^2}{\zeta^2}+\frac{1}{M}\right)^{-1} A_{\bm w}
	\end{array}\right)^{-1},
\end{equation}
and the inexact block lower triangular preconditioner,
\begin{equation}\label{eqn:blk_low_tri_ME}
	\widehat{\mathcal{B}_L^E} = \left(\begin{array}{ccc}
	{S_{\bm u}^E}^{-1} & 0 & 0 \\
	-\alpha \BuE & {S_p^E}^{-1} & 0 \\
	0 & \tau B_{\bm w}^T & S_{\bm w}^{-1}
	\end{array}\right)^{-1},
\end{equation}
where %, again, $\AuE =  A_{ll}-A_{bl}^T D_{bb}^{-1}A_{bl}$,
%$\ApEs = \left(\frac{\alpha^2}{\zeta^2} + \frac{1}{M}\right) M_p+\alpha^2 B_b D_{bb}^{-1} B_{b}^T$,
%and
$\BuE = B_l- B_b D_{bb}^{-1} A_{bl}$.
%$M_{\bm{w}}$ is the scaled mass matrices for the RT0 space,
%and $(\ddiv \bm{w}_h, \ddiv \bm{w}_h)\rightarrow A_{\bm w}$.
For notational convenience, we define\\ $\ApE = \frac{1}{M} M_p+\alpha^2 B_b D_{bb}^{-1} B_{b}^T$ as the pressure block in the bubble-eliminated system.

\edit{\begin{lemma} \label{lem:stokesP}
%Let $A_{\bm{u}}$ be the matrix representation of the bilinear form $a(\cdot,\cdot)$, $B_{\bm{u}}$ be the matrix of the discrete divergence operator,
%	and $\gamma_B = \frac{\gamma_B^0}{\sqrt{d}}$, where $\gamma_B^0 > 0$ is a constant.
	If the pair of finite-element spaces $\bm{V}_h \times Q_h$ is Stokes-stable, i.e., they satisfy the following inf-sup condition~\cite{GR1986},
	\begin{equation}\label{eqn:stokes-inf-sup}
		\sup_{\bm{v}\in\bm{V}_h} \frac{(\ddiv\bm{v},p)}{\|\bm{v}\|_1} \geq \gamma_B^0 \|p\|, \quad \forall \ p\in Q_h,
		%\sup_{\bm{v}\in\bm{V}_h} \frac{( B_{\bm u}\bm{v},p)}{\|\bm{v}\|_1} \geq \gamma_B^0 \|p\|_{M_p}, \quad \forall \ p\in Q_h,
	\end{equation}
	then, in matrix form, we have,
	\begin{equation}\label{eqn:Ainv_to_L2}
		\|B_{\bm{u}}^Tp\|_{A_{\bm{u}}^{-1}} \geq \frac{\gamma_B}{\zeta}\|p\|_{M_p}, \quad \forall \ p\in Q_h,
	\end{equation}
	with $\gamma_B = \gamma_B^0/\sqrt{d}$.
	Furthermore, from \eqref{eqn:ADequiv}, we have,
	\begin{equation}\label{eqn:ADinv_to_L2}
		\|B_{\bm{u}}^Tp\|_{(A^D_{\bm{u}})^{-1}} \geq \frac{1}{\eta} \frac{\gamma_B}{\zeta}\|p\|_{M_p}, \quad \forall \ p\in Q_h.
	\end{equation}
\end{lemma}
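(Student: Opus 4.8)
The plan is to recognize $\|B_{\bm u}^T p\|_{A_{\bm u}^{-1}}$ as a supremum over $\bm V_h$, bound the elasticity energy norm by the $H^1$-norm with an explicit constant involving $\zeta$, and then invoke the Stokes inf-sup condition \eqref{eqn:stokes-inf-sup}. The second inequality \eqref{eqn:ADinv_to_L2} will be a one-line consequence of the spectral equivalence \eqref{eqn:ADequiv}.

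Since $A_{\bm u}$ is SPD, identifying finite-element functions with their coefficient vectors, for any $p \in Q_h$ one has
\begin{equation*}
\|B_{\bm u}^T p\|_{A_{\bm u}^{-1}}
 = \sup_{\bm 0 \neq \bm v_h \in \bm V_h} \frac{(B_{\bm u}^T p,\bm v_h)}{\|\bm v_h\|_{A_{\bm u}}}
 = \sup_{\bm 0 \neq \bm v_h \in \bm V_h} \frac{(\ddiv \bm v_h, p_h)}{\|\bm v_h\|_{A_{\bm u}}},
\end{equation*}
using that $(B_{\bm u}^T p,\bm v_h) = -(\ddiv\bm v_h,p_h)$, invariance of the supremum under $\bm v_h \mapsto -\bm v_h$, that $\|\bm v_h\|_{A_{\bm u}}^2 = a(\bm v_h,\bm v_h)$, and that $\|p\|_{M_p} = \|p_h\|_{L^2(\Omega)}$. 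The key step is the bound $\|\bm v_h\|_{A_{\bm u}} \le \sqrt d\,\zeta\,\|\bm v_h\|_1$: from the pointwise inequalities $\|\boldsymbol\varepsilon(\bm v_h)\|_F \le \|\nabla\bm v_h\|_F$ and $(\ddiv\bm v_h)^2 = (\mathrm{tr}\,\boldsymbol\varepsilon(\bm v_h))^2 \le d\,\|\boldsymbol\varepsilon(\bm v_h)\|_F^2$, integrating over $\Omega$ gives $a(\bm v_h,\bm v_h) = 2\mu\|\boldsymbol\varepsilon(\bm v_h)\|^2 + \lambda\|\ddiv\bm v_h\|^2 \le (2\mu+\lambda d)\|\boldsymbol\varepsilon(\bm v_h)\|^2 \le d\zeta^2\|\bm v_h\|_1^2$, since $2\mu + \lambda d = d\zeta^2$.

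Plugging this bound into the supremum above — first restricting to $\bm v_h$ with $(\ddiv\bm v_h,p_h)>0$, which changes neither supremum when $p_h \neq 0$ (the case $p_h=0$ being trivial) — and applying \eqref{eqn:stokes-inf-sup} yields
\begin{equation*}
\|B_{\bm u}^T p\|_{A_{\bm u}^{-1}}
 \ge \frac{1}{\sqrt d\,\zeta}\sup_{\bm 0 \neq \bm v_h \in \bm V_h}\frac{(\ddiv\bm v_h,p_h)}{\|\bm v_h\|_1}
 \ge \frac{\gamma_B^0}{\sqrt d\,\zeta}\|p_h\|_{L^2(\Omega)} = \frac{\gamma_B}{\zeta}\|p\|_{M_p},
\end{equation*}
which is \eqref{eqn:Ainv_to_L2}. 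For \eqref{eqn:ADinv_to_L2}, I use only \eqref{eqn:ADequiv}: the bound $\|\bm u\|_{A_{\bm u}^D} \le \eta\|\bm u\|_{A_{\bm u}}$ dualizes to $\|g\|_{(A_{\bm u}^D)^{-1}} \ge \eta^{-1}\|g\|_{A_{\bm u}^{-1}}$ for every vector $g$, directly from the supremum characterization of the dual norms; applying this with $g = B_{\bm u}^T p$ and combining with \eqref{eqn:Ainv_to_L2} gives the claim.

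The only point requiring care is the supremum manipulation: one must pass to test functions with $(\ddiv\bm v_h,p_h)>0$ before replacing $\|\bm v_h\|_{A_{\bm u}}$ by its upper bound $\sqrt d\,\zeta\,\|\bm v_h\|_1$, so that the direction of the inequality is preserved. The trace and symmetric-gradient inequalities and the identity $2\mu+\lambda d = d\zeta^2$ are elementary and parameter-independent, so the resulting constant $\gamma_B/\zeta = \gamma_B^0/(\sqrt d\,\zeta)$ depends only on the Stokes inf-sup constant, the dimension, and the Lamé parameters through $\zeta$, exactly as needed in the subsequent robustness analysis.
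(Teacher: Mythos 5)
Your proposal is correct and follows essentially the same route as the paper: express $\|B_{\bm u}^T p\|_{A_{\bm u}^{-1}}$ as a supremum over $\bm V_h$, use $\|\bm v\|_{A_{\bm u}} \le \sqrt{d}\,\zeta\,\|\bm v\|_1$ (from $a(\bm v,\bm v) \le (2\mu+d\lambda)(\boldsymbol\varepsilon(\bm v),\boldsymbol\varepsilon(\bm v))$ and $d\zeta^2 = 2\mu + d\lambda$) together with the Stokes inf-sup condition, and then obtain \eqref{eqn:ADinv_to_L2} from the spectral equivalence \eqref{eqn:ADequiv}. Your extra care with the sign of $B_{\bm u}$ and with restricting to test functions of positive numerator only makes explicit details the paper glosses over.
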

\begin{proof}
Here, we use $\bm{v}$ to denote both the finite-element function and its vector representation.  Since $\bm{V}_h \times Q_h$ is Stokes stable, it satisfies the inf-sup condition in \eqref{eqn:stokes-inf-sup},
	where $\gamma^0_B > 0$ is a constant that does not depend on mesh size.
	Using the fact that $a(\bm{u},\bm{u}) \leq (2\mu +
        d\lambda)(\epsilon(\bm{u}),\epsilon(\bm{u}))$, we have $\|\bm{v}\|_{A_{\bm{u}}} \leq \sqrt{d}\zeta\|\bm{v}\|_1$. Then, for any $p\in Q_h$,
	\begin{equation}\label{eqn:stokesA-inf-sup}
%%Operator
%		\sup_{\bm{v}\in\bm{V_h}} \frac{(\ddiv\bm{v},p)}{\|\bm{v}\|_{A_{\bm{u}}}} \geq \sup_{\bm{v}\in\bm{V_h}} \frac{(\ddiv\bm{v},p)}{\sqrt{d}\zeta\|\bm{v}\|_1}
%		\geq \frac{\gamma_B^0}{\sqrt{d}\zeta}\|p\| =: \frac{\gamma_B}{\zeta}\|p\|.
%%Matrix
		\sup_{\bm{v}\in\bm{V_h}} \frac{(B_{\bm{u}}\bm{v},p)}{\|\bm{v}\|_{A_{\bm{u}}}} \geq \sup_{\bm{v}\in\bm{V_h}} \frac{(\ddiv \bm{v},p)}{\sqrt{d}\zeta\|\bm{v}\|_1}
		\geq \frac{\gamma_B^0}{\sqrt{d}\zeta}\|p\|_{M_p} =: \frac{\gamma_B}{\zeta}\|p\|_{M_p}.
	\end{equation}
	Using (\ref{eqn:stokesA-inf-sup}) and
	\begin{equation*}
		\|B_{\bm u}^Tp\|_{A_{\bm{u}}^{-1}} = \sup_{\bm{v}\in\bm{V_h}} \frac{(\bm{v},B_{\bm u}^Tp)}{\|\bm{v}\|_{A_{\bm{u}}}} =
%		\sup_{\bm{v}\in\bm{V_h}} \frac{(\ddiv\bm{v},p)}{\|\bm{v}\|_{A_{\bm{u}}}},
		\sup_{\bm{v}\in\bm{V_h}} \frac{(B_{\bm{u}}\bm{v},p)}{\|\bm{v}\|_{A_{\bm{u}}}},
	\end{equation*}
	(\ref{eqn:Ainv_to_L2}) is obtained. Equation~\eqref{eqn:ADinv_to_L2} follows from \eqref{eqn:ADequiv}, the spectral equivalence of $A_{\bm{u}}$ and $A_{\bm{u}}^D$.
\end{proof}}

In order to prove that (\ref{eqn:blk_low_triE}) and (\ref{eqn:blk_low_tri_ME}) satisfy the requirements to be FOV-equivalent preconditioners for the $\mathcal{A}^E$ system we need the \edit{following} relation for the bubble-eliminated system,
\begin{equation}\label{eqn:Bp_to_p_elim}
 \|(\BuE)^T p\|_{{(\AuE)}^{-1}}^2 \geq \frac{\gamma_B^2}{\eta^2 \zeta^2} \| p \|_{M_p}^2 - (D_{bb}^{-1} B_b^T p, B_b^T p).
 \end{equation}
This is established using Lemma~\ref{lem:stokesP}, the first two by two blocks of Equation~(\ref{eqn:LSL}), and a direct computation.
With this result, we now show that (\ref{eqn:blk_low_triE}) satisfies the requirements to be an FOV-equivalent preconditioner for $\mathcal{A}^E$.

%%%%%%%%%%%%%%%%%% Exact Eliminated
\begin{theorem} \label{thm:BL_elim}
	Assuming a shape regular mesh and the discretization described above,
	there exists constants $\Sigma$ and $\Upsilon$, independent of discretization or physical parameters,
	such that, for any $\bm{x}=(\bm{u},p,\bm{w})^T\neq\bm{0}$,
	\[
	\Sigma\leq\frac{\left(\mathcal{B}_L^E\mathcal{A}^E \bm{x},\bm{x}\right)_{(\mathcal{B}_D^E)^{-1}}}{\left(\bm{x},\bm{x}\right)_{(\mathcal{B}_D^E)^-1}},\
	\frac{\|\mathcal{B}_L^E\mathcal{A}^E \bm{x}\|_{(\mathcal{B}_D^E)^{-1}}}{\|\bm{x}\|_{(\mathcal{B}_D^E)^{-1}}}\leq\Upsilon.
	\]
\end{theorem}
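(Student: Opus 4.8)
The plan is to verify the two field-of-value inequalities after a congruence transformation to the Euclidean inner product. Set $\mathcal{D}_E := (\mathcal{B}_D^E)^{-1} = \diag\bigl(\AuE,\,\ApEs,\,\tau M_{\bm w}+\tau^2 c_p A_{\bm w}\bigr)$ with $c_p=\bigl(\tfrac{\alpha^2}{\zeta^2}+\tfrac1M\bigr)^{-1}$ (this is exactly the weight $\mathcal{D}^E$ of Theorem~\ref{thm:ElimWP}), and let $\mathcal{T}:=(\mathcal{B}_L^E)^{-1}$ be the block lower triangular matrix in~\eqref{eqn:blk_low_triE}. With $\widehat{\mathcal A}:=\mathcal{D}_E^{-1/2}\mathcal{A}^E\mathcal{D}_E^{-1/2}$ and $\widehat{\mathcal T}:=\mathcal{D}_E^{-1/2}\mathcal{T}\mathcal{D}_E^{-1/2}$, the operator $\mathcal{B}_L^E\mathcal{A}^E$ is similar through $\mathcal{D}_E^{1/2}$ to $\widehat{\mathcal T}^{-1}\widehat{\mathcal A}$, and the two claims become $(\widehat{\mathcal T}^{-1}\widehat{\mathcal A}\bm z,\bm z)\ge\Sigma\|\bm z\|^2$ and $\|\widehat{\mathcal T}^{-1}\widehat{\mathcal A}\bm z\|\le\Upsilon\|\bm z\|$ in the $\ell^2$ inner product. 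Note $\widehat{\mathcal T}=I+\widehat{\mathcal E}$ with $\widehat{\mathcal E}$ strictly block lower triangular (only the $(2,1)$ and $(3,2)$ blocks are nonzero), so $\widehat{\mathcal E}^3=0$ and $\widehat{\mathcal T}^{-1}=I-\widehat{\mathcal E}+\widehat{\mathcal E}^2$; and $\mathcal{A}^E=\mathcal{T}+\mathcal R$, where $\mathcal R$ has only the blocks $\alpha(\BuE)^T$ in $(1,2)$, $\ApE-\ApEs=-\tfrac{\alpha^2}{\zeta^2}M_p$ in $(2,2)$, $-\tau B_{\bm w}$ in $(2,3)$, and $-\tau^2 c_p A_{\bm w}$ in $(3,3)$.

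For $\Upsilon$: $\widehat{\mathcal A}$ is uniformly bounded since that is precisely the continuity estimate~\eqref{continuity-E}. The two nonzero blocks of $\widehat{\mathcal E}$ are $-\alpha(\ApEs)^{-1/2}\BuE(\AuE)^{-1/2}$ and $\tau(\tau M_{\bm w}+\tau^2 c_p A_{\bm w})^{-1/2}B_{\bm w}^T(\ApEs)^{-1/2}$, with norms $\sup_p\alpha\|(\BuE)^Tp\|_{(\AuE)^{-1}}/\|p\|_{\ApEs}$ and $\sup_p\tau\|B_{\bm w}^Tp\|_{(\tau M_{\bm w}+\tau^2 c_p A_{\bm w})^{-1}}/\|p\|_{\ApEs}$; both are bounded uniformly by applying~\eqref{continuity-E} to the pairs $\bigl((0,p,0),(\bm v,0,0)\bigr)$ and $\bigl((0,p,0),(0,0,\bm r)\bigr)$. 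Hence $\widehat{\mathcal T}^{-1}$ is uniformly bounded and $\Upsilon:=\|\widehat{\mathcal T}^{-1}\|\,\|\widehat{\mathcal A}\|$ is admissible.

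The coercivity bound is the heart of the proof. I would evaluate $(\mathcal{B}_L^E\mathcal{A}^E\bm x,\bm x)_{\mathcal{D}_E}=(\mathcal{D}_E\bm y,\bm x)$, where $\bm y$ solves $\mathcal{T}\bm y=\mathcal{A}^E\bm x$; block forward substitution gives $\bm y_u=\bm u+\alpha(\AuE)^{-1}(\BuE)^Tp$, $\ApEs\bm y_p=\ApE p+\alpha^2\BuE(\AuE)^{-1}(\BuE)^Tp-\tau B_{\bm w}\bm w$, and $\bm y_w$ from the third row. A direct computation collapses $(\mathcal{D}_E\bm y,\bm x)$ to $\|\bm u\|_{\AuE}^2+\alpha^2\|(\BuE)^Tp\|_{(\AuE)^{-1}}^2+\tfrac1M\|p\|_{M_p}^2+\alpha^2\|B_b^Tp\|_{D_{bb}^{-1}}^2+\tau\|\bm w\|_{M_{\bm w}}^2+\tau^2\|B_{\bm w}\bm w\|_{(\ApEs)^{-1}}^2$ together with the single $\bm u$--$p$ cross term $\alpha(\BuE\bm u,p)$ and two $p$--$\bm w$ cross terms. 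The decisive moves: (i) apply~\eqref{eqn:Bp_to_p_elim} to bound $\alpha^2\|(\BuE)^Tp\|_{(\AuE)^{-1}}^2$ below by $\tfrac{\alpha^2\gamma_B^2}{\eta^2\zeta^2}\|p\|_{M_p}^2-\alpha^2\|B_b^Tp\|_{D_{bb}^{-1}}^2$, the subtracted term cancelled by the $\alpha^2\|B_b^Tp\|_{D_{bb}^{-1}}^2$ coming from $\ApE$, so that the pressure quantities recombine into a fixed multiple of $\|p\|_{\ApEs}^2=c_p^{-1}\|p\|_{M_p}^2+\alpha^2\|B_b^Tp\|_{D_{bb}^{-1}}^2$ (using $\tfrac1M\le c_p^{-1}$); (ii) absorb the cross terms by Young's inequality with a fixed, parameter-independent splitting, the $p$--$\bm w$ couplings being controlled by $\tau^2\|B_{\bm w}\bm w\|_{(\ApEs)^{-1}}^2$ after the block bound $\alpha\|(\BuE)^Tp\|_{(\AuE)^{-1}}\lesssim\|p\|_{\ApEs}$ from the previous paragraph; (iii) identify the leftover $\tau^2\|B_{\bm w}\bm w\|_{(\ApEs)^{-1}}^2$ with the Darcy augmentation $\tau^2 c_p\|\bm w\|_{A_{\bm w}}^2$ in $\|\bm x\|_{\mathcal D_E}^2$, using that $c_p^{-1}M_p\le\ApEs\le 2c_p^{-1}M_p$ (since $\alpha^2 B_bD_{bb}^{-1}B_b^T\le\tfrac{\alpha^2}{\zeta^2}M_p$, which follows from $D_{bb}\ge A_{bb}$, see~\eqref{eqn:ADequiv}, and $B_bA_{bb}^{-1}B_b^T\le\zeta^{-2}M_p$ by Cauchy--Schwarz together with $\|\epsilon(\bm v_b)\|^2\ge d^{-1}\|\ddiv\bm v_b\|^2$ and $\zeta^2=\lambda+2\mu/d$), hence $(\ApEs)^{-1}\simeq c_p M_p^{-1}$. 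What remains is a positive multiple of $\|\bm x\|_{\mathcal D_E}^2$, which gives $\Sigma$.

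The main obstacle is the bookkeeping in steps (i)--(ii): the inf-sup relation~\eqref{eqn:Bp_to_p_elim} delivers only the degraded constant $\gamma_B/(\eta\zeta)$ and only up to the subtraction $\alpha^2\|B_b^Tp\|_{D_{bb}^{-1}}^2$, so one must verify that the stabilization block carried by $\ApEs$ — hence by $\mathcal{B}_D^E$ — compensates this exactly and still leaves room for the Young weights needed to absorb all of the $\bm u$--$p$ and $p$--$\bm w$ couplings; this interplay is what makes the argument technical in the indefinite setting. Finally, the full-bubble Theorems~\ref{thm:BL}--\ref{thm:BL_inexact} and the block upper triangular analogues follow from the same computation by taking the $D_{bb}^{-1}B_b^T$ contributions to zero, replacing $\BuE$ by $B_{\bm u}$ and $\ApEs$ by $\bigl(\tfrac{\alpha^2}{\zeta^2}+\tfrac1M\bigr)M_p$, and using~\eqref{eqn:Ainv_to_L2} in place of~\eqref{eqn:Bp_to_p_elim}; in the inexact versions one further replaces the exact block solves by the spectrally equivalent $S_{\bm u},S_p,S_{\bm w}$ and tracks the extra error through the hypothesis $\|I-S_{\bm u}A_{\bm u}\|_{A_{\bm u}}\le\rho\le0.2228$.
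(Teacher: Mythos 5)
Your expansion of $(\mathcal{B}_L^E\mathcal{A}^E\bm x,\bm x)_{(\mathcal{B}_D^E)^{-1}}$ by block forward substitution is exactly the identity the paper starts from, your ingredient inequalities (\eqref{eqn:Bp_to_p_elim}, \eqref{eqn:Bu_leq_Au}, \eqref{eqn:Aps_leq_Mp}, \eqref{eqn:Aps_leq_Ap}, and $c_p^{-1}M_p\le\ApEs\le 2c_p^{-1}M_p$) are all correct, and your treatment of $\Upsilon$ via the nilpotent triangular factor is a fine (slightly different, but harmless) route to what the paper dispatches with continuity and Cauchy--Schwarz. The gap is in the coercivity bookkeeping, and it is not just ``technical'': the order of operations in your steps (i)--(ii) cannot be made to work. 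You first spend \emph{all} of $\alpha^2\|(\BuE)^Tp\|^2_{(\AuE)^{-1}}$ on the inf-sup bound \eqref{eqn:Bp_to_p_elim}, recombining the pressure terms into a multiple $m\,\|p\|^2_{\ApEs}$ with $m\le\tfrac12\min\{1,\gamma_B^2/\eta^2\}$, and only then try to absorb the couplings after rewriting them via $\alpha\|(\BuE)^Tp\|_{(\AuE)^{-1}}\lesssim\|p\|_{\ApEs}$ and $\|p\|_{\ApE}\le\|p\|_{\ApEs}$. At that point the quadratic budget on $\|\bm u\|^2_{\AuE}$ and on $\tau^2\|B_{\bm w}\bm w\|^2_{(\ApEs)^{-1}}$ is exactly $1$ each, while there are three couplings of size $\|p\|_{\ApEs}\|\bm u\|_{\AuE}$, $\|p\|_{\ApEs}\tau\|B_{\bm w}\bm w\|_{(\ApEs)^{-1}}$ (twice); any Young splitting that keeps the $\bm u$ and $\bm w$ losses below $1$ forces a total loss of at least $\tfrac54\|p\|^2_{\ApEs}$, whereas only $m\le\tfrac12$ is available ($\gamma_B<1$, $\eta\ge1$, so $m$ is never large enough). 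So with your ordering the lower bound comes out negative, not merely with a poor constant.

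The repair is precisely the paper's ordering: do the absorption \emph{before} invoking the inf-sup, keeping $\alpha\|(\BuE)^Tp\|_{(\AuE)^{-1}}$ and $\|p\|_{\ApE}$ as separate variables. Bounding the $\bm u$--$p$ coupling by $\alpha\|(\BuE)^Tp\|_{(\AuE)^{-1}}\|\bm u\|_{\AuE}$ and the two $p$--$\bm w$ couplings by $\alpha\|(\BuE)^Tp\|_{(\AuE)^{-1}}\cdot\tau\|B_{\bm w}\bm w\|_{(\ApEs)^{-1}}$ and $\|p\|_{\ApE}\cdot\tau\|B_{\bm w}\bm w\|_{(\ApEs)^{-1}}$ (this is where \eqref{eqn:Bu_leq_Au}, \eqref{eqn:Aps_leq_Mp}, \eqref{eqn:Aps_leq_Ap} and $\tfrac\alpha\zeta c_p^{1/2}<1$ enter), one obtains a quadratic form in the five quantities $\|\bm u\|_{\AuE}$, $\alpha\|(\BuE)^Tp\|_{(\AuE)^{-1}}$, $\|p\|_{\ApE}$, $\tau\|B_{\bm w}\bm w\|_{(\ApEs)^{-1}}$, $\sqrt{\tau}\|\bm w\|_{M_{\bm w}}$ with unit diagonal and $-\tfrac12$ couplings, which is SPD with a universal $\sigma>0$ independent of $\gamma_B,\eta$. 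Only then is \eqref{eqn:Bp_to_p_elim} applied to (half of) the surviving $\sigma\alpha^2\|(\BuE)^Tp\|^2_{(\AuE)^{-1}}$, the subtracted $\tfrac{\alpha^2}{2}(D_{bb}^{-1}B_b^Tp,B_b^Tp)$ is covered by the stabilization part of $\|p\|^2_{\ApE}$, and the leftover $\tau^2\|B_{\bm w}\bm w\|^2_{(\ApEs)^{-1}}$ is converted to the $\tau^2 c_p A_{\bm w}$ term exactly as in your step (iii). With that reordering your argument coincides with the paper's proof; as written, the decisive absorption step fails.
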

\begin{proof}
    By direct computation and the Cauchy-Schwarz inequality, %and use of Lemmas~\ref{lem:stokesP}-\ref{lemma:Bu_leq_Au}, defined above,
\begin{align*}
(\mathcal{B}_L^E\mathcal{A}^E \bm{x},\bm{x})_{(\mathcal{B}_D^E)^{-1}}
&= \|u\|_{\AuE}^2 + \alpha((\BuE)^Tp,u) + \alpha^2 \|(\BuE)^T p\|^2_{(\AuE)^{-1}} + \|p\|_{(\ApE)}^2 \\
	&\quad - \tau \alpha^2 ({(\ApEs)}^{-1} (\BuE) (\AuE)^{-1} (\BuE)^T p, B_{\bm w} \bm{w})\\
	&\quad - \tau({(\ApEs)}^{-1} (\ApE) p, B_{\bm w} \bm{w}) + \tau^2 \|B_{\bm w} \bm{w}\|_{{(\ApEs)}^{-1}}^2 + \tau \|\bm{w}\|_{M_{\bm w}}^2\\
&\geq \|u\|_{\AuE}^2 - \alpha\|(\BuE)^Tp\|_{(\AuE)^{-1}} \|u\|_{\AuE} + \alpha^2 \|(\BuE)^T p\|^2_{(\AuE)^{-1}} \\
    & \quad  + \|p\|_{(\ApE)}^2 - \tau \alpha^2 \| (\BuE) (\AuE)^{-1} (\BuE)^T p\|_{{(\ApEs)}^{-1}} \|B_{\bm w} \bm{w}\|_{{(\ApEs)}^{-1}} \\
	&\quad - \tau\|(\ApE) p\|_{{(\ApEs)}^{-1}} \|B_{\bm w} \bm{w}\|_{{(\ApEs)}^{-1}} \\
	&\quad  + \tau^2 \|B_{\bm w} \bm{w}\|_{{(\ApEs)}^{-1}}^2 + \tau \|\bm{w}\|_{M_{\bm w}}^2.
	\end{align*}
	By the definitions of matrices $A_p^{E_*}$, $M_p$, and $A_p^E$, we have
	%Next, note that using an eigenvalue analysis, we can establish a relationship between the inverse of the pressure mass matrix and the inverse of the matrix used for the weighted norm of the pressure,
	%\begin{equation} \label{eqn:Aps_leq_Mp}
	\begin{align}
	\| q \|_{(\ApEs)^{-1}}^2 &\leq \left(\frac{\alpha^2}{\zeta^2}+\frac{1}{M}\right)^{-1} \| q \|_{M_p^{-1}}^2, \label{eqn:Aps_leq_Mp}\\
	%\end{equation}
	%In the same manner, we can establish a relation between the inverse of the matrix for pressure in $\mathcal{A}^E$ and the inverse of the matrix used for the weighted norm of the pressure,
	%and
	%\begin{equation}\label{eqn:Aps_leq_Ap}
	\| q \|_{{(\ApEs)}^{-1}}^2 &\leq \| q \|_{(\ApE)^{-1}}^2. \label{eqn:Aps_leq_Ap}
	\end{align}
	%\end{equation}
Then, using (\ref{eqn:Aps_leq_Mp}) on the $\| (\BuE) (\AuE)^{-1} (\BuE)^T p\|_{{(\ApEs)}^{-1}}$ term and (\ref{eqn:Aps_leq_Ap}) on the $\|(\ApE) p\|_{{(\ApEs)}^{-1}}$ term, we obtain,
\begin{align*}
(\mathcal{B}_L^E\mathcal{A}^E \bm{x},\bm{x})_{(\mathcal{B}_D^E)^{-1}}
&\geq \|u\|_{\AuE}^2 - \alpha\|(\BuE)^Tp\|_{(\AuE)^{-1}} \|u\|_{\AuE} \\
	&\quad + \alpha^2 \|(\BuE)^T p\|^2_{(\AuE)^{-1}}  + \|p\|_{(\ApE)}^2 \\
	&\quad- \tau \alpha^2 \left(\frac{\alpha^2}{\zeta^2}+\frac{1}{M}\right)^{-\frac{1}{2}} \|\BuE(\AuE)^{-1} (\BuE)^T p\|_{M_p^{-1}} \|B_{\bm w} \bm{w}\|_{{(\ApEs)}^{-1}} \\
	&\quad - \tau \| p\|_{(\ApE)} \|B_{\bm w} \bm{w}\|_{{(\ApEs)}^{-1}} + \tau^2 \|B_{\bm w} \bm{w}\|_{{(\ApEs)}^{-1}}^2 + \tau \|\bm{w}\|_{M_{\bm w}}^2.
	\end{align*}
	Observing that, for $d=2,3$, $a(\bm{v},\bm{v}) \leq (2\mu + d \lambda)(\epsilon(\bm{v}),\epsilon(\bm{v}))$ for any $\bm{v}$,
	and a direct computation of the elimination of the bubble, we have,
	\begin{equation}\label{eqn:Bu_leq_Au}
	\zeta^2 \| \BuE \bm{v} \|_{{M_p}^{-1}}^2 \leq \| \bm{v} \|_{\AuE}^2.
	\end{equation}
Applying (\ref{eqn:Bu_leq_Au}) to the $\|\BuE(\AuE)^{-1} (\BuE)^T p\|_{M_p^{-1}}$ term with $\bm{v}= (\AuE)^{-1} (\BuE)^T p$ gives,
\begin{align*}
(\mathcal{B}_L^E\mathcal{A}^E \bm{x},\bm{x})_{(\mathcal{B}_D^E)^{-1}}
&\geq \|u\|_{\AuE}^2 - \alpha\|(\BuE)^Tp\|_{(\AuE)^{-1}} \|u\|_{\AuE} \\
	& \quad + \alpha^2 \|(\BuE)^T p\|^2_{(\AuE)^{-1}} + \|p\|_{(\ApE)}^2 \\
	&\quad - \tau \alpha \frac{\alpha}{\zeta} \left(\frac{\alpha^2}{\zeta^2}+\frac{1}{M}\right)^{-\frac{1}{2}}
		\| (\BuE)^T p\|_{{(\AuE)}^{-1}} \|B_{\bm w} \bm{w}\|_{{(\ApEs)}^{-1}} \\
	&\quad - \tau \| p\|_{(\ApE)} \|B_{\bm w} \bm{w}\|_{{(\ApEs)}^{-1}} + \tau^2 \|B_{\bm w} \bm{w}\|_{{(\ApEs)}^{-1}}^2 + \tau \|\bm{w}\|_{M_{\bm w}}^2\\
&\geq \|u\|_{\AuE}^2 - \alpha\|(\BuE)^Tp\|_{(\AuE)^{-1}} \|u\|_{\AuE} + \alpha^2 \|(\BuE)^T p\|^2_{(\AuE)^{-1}} \\
	& \quad + \|p\|_{(\ApE)}^2 - \tau \alpha \| (\BuE)^T p\|_{({\AuE})^{-1}} \|B_{\bm w} \bm{w}\|_{{(\ApEs)}^{-1}} \\
	&\quad - \tau \| p\|_{(\ApE)} \|B_{\bm w} \bm{w}\|_{{(\ApEs)}^{-1}} + \tau^2 \|B_{\bm w} \bm{w}\|_{{(\ApEs)}^{-1}}^2 + \tau \|\bm{w}\|_{M_{\bm w}}^2,
\end{align*}
where we use the fact $\frac{\alpha}{\zeta} \left(\frac{\alpha^2}{\zeta^2}+\frac{1}{M}\right)^{-\frac{1}{2}} < 1$. Rewriting the right hand side,
\begin{align*}
&\ \quad (\mathcal{B}_L^E\mathcal{A}^E \bm{x},\bm{x})_{(\mathcal{B}_D^E)^{-1}}\geq \\
&
  \left(\begin{array}{c}
  \|\bm{u}\|_{\AuE} 			\\
  \alpha \|(\BuE)^T p\|_{(\AuE)^{-1}}	\\
  \|p\|_{(\ApE)}	\\
  \tau \|B_{\bm w} \bm{w}\|_{{(\ApEs)}^{-1}}	\\
  \sqrt{\tau}\|\bm{w}\|_{M_{\bm w}}
  \end{array}\right)^T
  \left(\begin{array}{ccccc}
  1&-\frac{1}{2}&0&0&0	\\
  -\frac{1}{2}&1&0&-\frac{1}{2}&0	\\
  0&0&1&-\frac{1}{2}&0	\\
  0&-\frac{1}{2}&-\frac{1}{2}&1&0	\\
  0&0&0&0&1
  \end{array}\right)
  \left(\begin{array}{c}
  \|\bm{u}\|_{\AuE} 			\\
  \alpha \|(\BuE)^T p\|_{(\AuE)^{-1}}	\\
  \|p\|_{(\ApE)}	\\
  \tau \|B_{\bm w} \bm{w}\|_{{(\ApEs)}^{-1}}	\\
  \sqrt{\tau}\|\bm{w}\|_{M_{\bm w}}
  \end{array}\right).
%% Matrix end
\end{align*}
The above matrix is SPD, meaning that there is a $\sigma > 0$ such that
\begin{align*}
(\mathcal{B}_L^E\mathcal{A}^E \bm{x},\bm{x})_{(\mathcal{B}_D^E)^{-1}}
&\geq \sigma\left(\|\bm{u}\|_{\AuE}^2 + \alpha^2 \|(\BuE)^T p\|_{(\AuE)^{-1}}^2 + \|p\|_{(\ApE)}^2 \right.  \\
		& \left. \quad \quad  \  + \tau^2 \|B_{\bm w} \bm{w}\|_{{(\ApEs)}^{-1}}^2 + \tau\|\bm{w}\|_{M_{\bm w}}^2 \right)\\
&\geq \sigma\left(\|\bm{u}\|_{\AuE}^2 + \frac{\alpha^2}{2} \|(\BuE)^T p\|_{(\AuE)^{-1}}^2  + \|p\|_{(\ApE)}^2  \right. \\
		& \left. \quad\quad \ + \tau^2 \|B_{\bm w} \bm{w}\|_{{(\ApEs)}^{-1}}^2 + \tau\|\bm{w}\|_{M_{\bm w}}^2 \right).
\end{align*}
Using (\ref{eqn:Bp_to_p_elim}), and the definition of $\| p \|_{\ApE}$, we get
\begin{align*}
(\mathcal{B}_L^E\mathcal{A}^E \bm{x},\bm{x})_{(\mathcal{B}_D^E)^{-1}}
&\geq \sigma\left(\|\bm{u}\|_{\AuE}^2 + \frac{\gamma_B^2}{2\eta^2} \frac{\alpha^2}{\zeta^2} \|p\|_{M_p}^2
		- \frac{\alpha^2}{2}(D_{bb}^{-1} B_b^T p, B_b^T p) + \|p\|_{(\ApE)}^2 \right .\\
&\qquad\quad	\left .	+ ~\tau^2 \|B_{\bm w} \bm{w}\|_{{(\ApEs)}^{-1}}^2 + \tau\|\bm{w}\|_{M_{\bm w}}^2 \right)\\
&= \sigma\left(\|\bm{u}\|_{\AuE}^2 + \frac{\gamma_B^2}{2\eta^2} \frac{\alpha^2}{\zeta^2} \|p\|_{M_p}^2 + \frac{1}{M}\|p\|_{M_p}^2
		+ \frac{\alpha^2}{2}(D_{bb}^{-1} B_b^T p, B_b^T p)\right .\\
		&\qquad\quad \left . + ~\tau^2 \|B_{\bm w} \bm{w}\|_{{(\ApEs)}^{-1}}^2 + \tau\|\bm{w}\|_{M_{\bm w}}^2 \right)\\
&\geq \sigma\left(\|\bm{u}\|_{\AuE}^2 + \min\{\frac{\gamma_B^2}{\eta^2},1\}\frac{1}{2}\|p\|_{(\ApEs)}^2
		+ \tau^2 \|B_{\bm w} \bm{w}\|_{{(\ApEs)}^{-1}}^2\right .\\
		&\qquad\quad \left . + ~\tau\|\bm{w}\|_{M_{\bm w}}^2 \right)\\
&\geq \sigma\left(\|\bm{u}\|_{\AuE}^2 + \min\{\frac{\gamma_B^2}{\eta^2},1\}\frac{1}{2}\|p\|_{(\ApEs)}^2
		 \right .\\
		&\qquad\quad \left . + ~\tau^2\left( \frac{\alpha^2}{\zeta^2} + \frac{1}{M}\right)^{-1} \| B_{\bm{w}} \bm{w}\|_{M_p^{-1}}^2 + \tau\|\bm{w}\|_{M_{\bm w}}^2 \right)\\
&\geq \Sigma(\bm{x},\bm{x})_{(\mathcal{B}_D^E)^{-1}},
	\end{align*}
where $\Sigma = \sigma\frac{1}{2}\min\{1,\frac{\gamma_B^2}{\eta^2}\}$. This provides the lower bound for the bubble-eliminated case.
The upper bound follows from the continuity of each term and the Cauchy-Schwarz inequality.
\end{proof}

Next, we prove that (\ref{eqn:blk_low_tri_ME}) satisfies the requirements to be an FOV-equivalent preconditioner for the $\mathcal{A}^E$ system when the inexact diagonal blocks are solved to sufficient accuracy.
%%%%%%%%%%%%%%%%%%%%%%% Inexact Eliminated
\begin{theorem} \label{thm:BL_elim_inexact} %\peter{Values for $\beta$ and $\rho$.}
	Assuming the spectral equivalence relations
	(\ref{eqn:equivalentHw}), (\ref{eqn:equivalentHuE}), and (\ref{eqn:equivalentHpE}) hold,
	$\| I-S_u^E \AuE \|_{\AuE} \leq \rho$ and $\| I-S_p^E (\ApEs) \|_{(\ApEs)} \leq \beta$, with $\rho>0$ and $\beta>0$ sufficiently small, then
	there exist constants $\Sigma$ and $\Upsilon$, independent of discretization and physical parameters,
	such that, for any $\bm{x}=(\bm{u},p,\bm{w})^T\neq\bm{0}$,
	\[
	\Sigma\leq\frac{\left(\widehat{\mathcal{B}_L^E}\mathcal{A}^E \bm{x},\bm{x}\right)_{(\widehat{\mathcal{B}_D^E})^{-1}}}
		{\left(\bm{x},\bm{x}\right)_{(\widehat{\mathcal{B}_D^E})^-1}},\
	\frac{\|\widehat{\mathcal{B}_L^E}\mathcal{A}^E \bm{x}\|_{(\widehat{\mathcal{B}_D^E})^{-1}}}{\|\bm{x}\|_{(\widehat{\mathcal{B}_D^E})^{-1}}}\leq\Upsilon.
	\]
\end{theorem}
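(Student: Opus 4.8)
The plan is to mirror the proof of Theorem~\ref{thm:BL_elim}, carrying the inexact diagonal solves as controlled perturbations; setting $S_{\bm u}^E=(\AuE)^{-1}$, $S_p^E=(\ApEs)^{-1}$ and $\rho=\beta=0$ recovers that theorem, and the same argument subsumes the inexact full-bubble result of Theorem~\ref{thm:BL_inexact}. The key structural observation is that, since $\widehat{\mathcal{B}_L^E}$ in \eqref{eqn:blk_low_tri_ME} retains the \emph{exact} off-diagonal blocks $-\alpha\BuE$ and $\tau B_{\bm w}^T$ while $\widehat{\mathcal{B}_D^E}$ carries the same diagonal solves $S_{\bm u}^E,S_p^E,S_{\bm w}$, expanding $\bigl(\widehat{\mathcal{B}_L^E}\mathcal{A}^E\bm{x},\bm{x}\bigr)_{(\widehat{\mathcal{B}_D^E})^{-1}}$ block by block makes every $S$-factor from the preconditioner cancel against its inverse from the weight. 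What is left is a bilinear form in $\bm{u},p,\bm{w}$ that does not involve $S_{\bm w}$ at all and in which $S_{\bm u}^E$ and $S_p^E$ enter only through the near-identity products $S_{\bm u}^E\AuE$, $S_p^E\ApEs$ (controlled by $\rho$, $\beta$) and through $S_p^E$-weighted norms that are themselves close to $(\ApEs)^{-1}$-norms. Hence this form equals the one appearing in the proof of Theorem~\ref{thm:BL_elim} plus correction terms each carrying an explicit factor $\rho$ or $\beta$; for example the $\bm{u}$--$p$ cross terms from the first two block rows collapse to $\alpha\bigl(S_{\bm u}^E\AuE\bm{u},(\BuE)^Tp\bigr)=\alpha\bigl((\BuE)^Tp,\bm{u}\bigr)+\mathcal{O}(\rho)$, and $\alpha^2\|(\BuE)^Tp\|_{S_{\bm u}^E}^2=(1+\mathcal{O}(\rho))\,\alpha^2\|(\BuE)^Tp\|_{(\AuE)^{-1}}^2$.

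For the lower bound I would then run the chain of estimates from the proof of Theorem~\ref{thm:BL_elim}: bound the leading part below by the $5\times5$ SPD quadratic form in the nonnegative quantities $\|\bm{u}\|_{\AuE}$, $\alpha\|(\BuE)^Tp\|_{(\AuE)^{-1}}$, $\|p\|_{\ApE}$, $\tau\|B_{\bm w}\bm{w}\|_{(\ApEs)^{-1}}$, $\sqrt{\tau}\|\bm{w}\|_{M_{\bm w}}$; estimate the $\rho$- and $\beta$-corrections by Cauchy--Schwarz together with \eqref{eqn:Aps_leq_Mp}, \eqref{eqn:Aps_leq_Ap}, and \eqref{eqn:Bu_leq_Au}, so that taking $\rho,\beta$ small enough absorbs them into the positive-definiteness margin of that matrix; then invoke \eqref{eqn:Bp_to_p_elim} and the definition of $\|\cdot\|_{\ApE}$ to turn $\alpha^2\|(\BuE)^Tp\|_{(\AuE)^{-1}}^2+\|p\|_{\ApE}^2$ into a multiple of $\|p\|_{\ApEs}^2$, use $\ApEs\preceq 2c_p^{-1}M_p$ (a consequence of $\alpha^2 B_b D_{bb}^{-1}B_b^T\preceq\frac{\alpha^2}{\zeta^2}M_p$ and \eqref{eqn:ADequiv}) to turn $\tau^2\|B_{\bm w}\bm{w}\|_{(\ApEs)^{-1}}^2$ into a multiple of $\tau^2 c_p\|B_{\bm w}\bm{w}\|_{M_p^{-1}}^2$, and finally apply the spectral equivalences \eqref{eqn:equivalentHw}, \eqref{eqn:equivalentHuE}, \eqref{eqn:equivalentHpE} to descend from $\|\bm{u}\|_{\AuE}^2$, $\|p\|_{\ApEs}^2$, $\tau\|\bm{w}\|_{M_{\bm w}}^2+\tau^2 c_p\|B_{\bm w}\bm{w}\|_{M_p^{-1}}^2$ to $\|\bm{x}\|_{(\widehat{\mathcal{B}_D^E})^{-1}}^2$. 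Tracking constants gives $\Sigma$ as a fixed multiple of $\min\{1,\gamma_B^2/\eta^2\}$, the equivalence constants, and the smallness margin, all independent of the mesh, time step, and physical parameters.

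The upper bound follows from continuity: each block of $\widehat{\mathcal{B}_L^E}\mathcal{A}^E$ is bounded in the $(\widehat{\mathcal{B}_D^E})^{-1}$-norm by Cauchy--Schwarz and the same auxiliary inequalities, now requiring no smallness; the third component has the form $S_{\bm w}(\cdots)$, so $\|S_{\bm w}(\cdots)\|_{S_{\bm w}^{-1}}^2=\|(\cdots)\|_{S_{\bm w}}^2$, which by \eqref{eqn:equivalentHw} is a bounded multiple of $\|(\cdots)\|_{(\tau M_{\bm w}+\tau^2 c_p A_{\bm w})^{-1}}^2$, the latter controlled via $\tau^2 B_{\bm w}^T S_p^E B_{\bm w}\preceq c_{2,p}^E(\tau M_{\bm w}+\tau^2 c_p A_{\bm w})$ and the inf-sup-type estimate $\tau\|B_{\bm w}^T g\|_{(\tau M_{\bm w}+\tau^2 c_p A_{\bm w})^{-1}}\le c_p^{-1/2}\|g\|_{M_p}$. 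I expect the main obstacle to be the bookkeeping that keeps the genuinely small corrections (those proportional to $\rho$ or $\beta$, which are absorbed into the margin) apart from the merely bounded ones (those carrying the equivalence constants $c_{i,\bm u}^E,c_{i,p}^E,c_{i,\bm w}$, which survive into $\Sigma$ and $\Upsilon$), arranging the Cauchy--Schwarz splittings so the bounded terms land on the already strictly-positive directions ($\tau\|\bm{w}\|_{M_{\bm w}}^2$ and $\|p\|_{\ApE}^2$) instead of eroding the margin. This separation also explains the asymmetric hypotheses: $S_{\bm u}^E$ and $S_p^E$ enter the bilinear form directly and feed the inf-sup-type estimate \eqref{eqn:Bp_to_p_elim} behind pressure coercivity, so they must be accurate, whereas $S_{\bm w}$ cancels out of the bilinear form and enters only through the weight, where two-sided spectral equivalence suffices.
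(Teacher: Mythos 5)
Your proposal is correct and follows essentially the same route as the paper's proof: the same block expansion of $\bigl(\widehat{\mathcal{B}_L^E}\mathcal{A}^E\bm{x},\bm{x}\bigr)_{(\widehat{\mathcal{B}_D^E})^{-1}}$ (with $S_{\bm w}$ cancelling and $S_{\bm u}^E$, $S_p^E$ entering only through near-identity products and $S$-weighted norms), the same auxiliary inequalities \eqref{eqn:Bu_leq_Au}, \eqref{eqn:Aps_leq_Mp}, \eqref{eqn:Aps_leq_Ap}, \eqref{eqn:Bp_to_p_elim}, the same $5\times 5$ positive-definiteness argument, and the same final descent via the spectral equivalences. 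The only difference is bookkeeping: the paper keeps the $S$-norms in the quadratic form and puts the $\rho,\beta$ perturbations into the matrix $\mathcal{Q}$, whereas you convert to the exact norms with $(1\pm\rho)$, $(1\pm\beta)$ factors and absorb the corrections into the margin of the unperturbed matrix, which is equivalent.
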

\begin{proof}
Assume that $\| I - S_u^E \AuE \|_{\AuE} \leq \rho$ and that $\| I - S_p^E (\ApEs) \|_{(\ApEs)} \leq \beta$.
By direct computation,
\begin{align*}
(\widehat{\mathcal{B}_L^E}\mathcal{A}^E \bm{x},\bm{x})_{\widehat{\mathcal{B}_D^E}^{-1}}
&= \|\bm{u}\|_{\AuE}^2 + \alpha((\BuE)^Tp,S_u^E \AuE \bm{u}) + \alpha^2 \|(\BuE)^T p\|^2_{S_u^E} + \|p\|_{(\ApE)}^2 \\
	&\quad + \tau \alpha (S_p^E (\BuE) (I-S_u^E \AuE) \bm{u}, B_{\bm w} \bm{w})\\
	&\quad - \tau \alpha^2 (S_p^E (\BuE) S_u^E (\BuE)^T p, B_{\bm w} \bm{w})\\
	&\quad - \tau(S_p^E (\ApE) p, B_{\bm w} \bm{w}) + \tau^2 \|B_{\bm w} \bm{w}\|_{S_p^E}^2 + \tau \|\bm{w}\|_{M_{\bm w}}^2\\
&\geq \|\bm{u}\|_{\AuE}^2 - \alpha\|(\BuE)^Tp\|_{S_u^E} \|\AuE \bm{u}\|_{S_u^E} + \alpha^2 \|(\BuE)^T p\|^2_{S_u^E} + \|p\|_{(\ApE)}^2\\
	&\quad -\tau \alpha \| (\BuE) (I-S_u^E \AuE) \bm{u} \|_{S_p^E} \| B_{\bm w} \bm{w} \|_{S_p^E}\\
	&\quad - \tau \alpha^2 \| (\BuE) S_u^E (\BuE)^T p \|_{S_p^E}  \| B_{\bm w} \bm{w} \|_{S_p^E}\\
	&\quad - \tau \| (\ApE) p\|_{S_p^E} \|B_{\bm w} \bm{w}\|_{S_p^E} + \tau^2 \|B_{\bm w} \bm{w}\|_{S_p^E}^2 + \tau \|\bm{w}\|_{M_{\bm w}}^2.
\end{align*}
Using $\| I - S_u^E \AuE \|_{\AuE} \leq \rho$ and $\| I - S_p^E \ApEs \|_{\ApEs} \leq \beta$, on  $\|\AuE u\|_{S_u^E}$,\\ $\| \BuE (I-S_u^E \AuE) u \|_{S_p^E}$, $\| \BuE S_u^E (\BuE)^T p \|_{S_p^E}$, and $\| \ApE p\|_{S_p^E}$ allows us to change norms and apply (\ref{eqn:Bu_leq_Au}), (\ref{eqn:Aps_leq_Mp}), and (\ref{eqn:Aps_leq_Ap}) to these terms in the same way as in the previous proof.  Thus,
\begin{align*}
(\widehat{\mathcal{B}_L^E}\mathcal{A}^E \bm{x},\bm{x})_{\widehat{\mathcal{B}_D^E}^{-1}}
&\geq \|u\|_{\AuE}^2 - \alpha(1+\rho)\|(\BuE)^Tp\|_{S_u^E} \|\bm{u}\|_{\AuE} + \alpha^2 \|(\BuE)^T p\|^2_{S_u^E} \\
	&\quad + \|p\|_{(\ApE)}^2 -\tau \frac{\alpha}{\zeta} (1+\beta) \rho \left(\frac{\alpha^2}{\zeta^2}+\frac{1}{M}\right)^{-\frac{1}{2}} \| \bm{u} \|_{\AuE} \| B_{\bm w} \bm{w} \|_{S_p^E}\\
	&\quad - \tau \alpha^2 \frac{1}{\zeta}(1+\beta)(1+\rho) \left(\frac{\alpha^2}{\zeta^2}+\frac{1}{M}\right)^{-\frac{1}{2}} \| (\BuE)^T p \|_{S_u^E}  \| B_{\bm w} \bm{w} \|_{S_p^E}\\
	&\quad - \tau (1+\beta) \| p\|_{(\ApE)} \|B_{\bm w} \bm{w}\|_{S_p^E} + \tau^2 \|B_{\bm w} \bm{w}\|_{S_p^E}^2 + \tau \|\bm{w}\|_{M_{\bm w}}^2\\
&\geq \|\bm{u}\|_{\AuE}^2 - \alpha(1+\rho)\|(\BuE)^Tp\|_{S_u^E} \|\bm{u}\|_{\AuE} + \alpha^2 \|(\BuE)^T p\|^2_{S_u^E}\\
	&\quad + \|p\|_{(\ApE)}^2 -\tau (1+\beta) \rho \| \bm{u} \|_{\AuE} \| B_{\bm w} \bm{w} \|_{S_p^E}\\
	&\quad - \tau \alpha (1+\beta)(1+\rho) \| (\BuE)^T p \|_{S_u^E}  \| B_{\bm w} \bm{w} \|_{S_p^E}\\
	&\quad - \tau (1+\beta) \| p\|_{(\ApE)} \|B_{\bm w} w\|_{S_p^E} + \tau^2 \|B_{\bm w} \bm{w}\|_{S_p^E}^2 + \tau \|\bm{w}\|_{M_{\bm w}}^2.
%% Matrix
%% Matrix end
\end{align*}
%\textcolor{red}{You can't make this so tiny!}
Then, rewriting the right hand side,
\begin{align*}
&(\widehat{\mathcal{B}_L^E}\mathcal{A}^E \bm{x},\bm{x})_{\widehat{\mathcal{B}_D^E}^{-1}} \geq
	\left(\begin{array}{c}
	\|\bm{u}\|_{\AuE} 			\\
	\alpha \|(\BuE)^T p\|_{S_u^E}	\\
	\|p\|_{(\ApE)}	\\
	\tau \|B_{\bm w} \bm{w}\|_{S_p^E}	\\
	\sqrt{\tau}\|\bm{w}\|_{M_{\bm w}}
	\end{array}\right)^T
	\mathcal{Q}
	\left(\begin{array}{c}
	\|\bm{u}\|_{\AuE} 			\\
	\alpha \|(\BuE)^T p\|_{S_u^E}	\\
	\|p\|_{(\ApE)}	\\
	\tau \|B_{\bm w} \bm{w}\|_{S_p^E}	\\
	\sqrt{\tau}\|\bm{w}\|_{M_{\bm w}}
	\end{array}\right),
\end{align*}
where
\begin{align*}
\mathcal{Q} =& \\
&\left(\begin{array}{ccccc}
1				&-\frac{1}{2}(1+\rho)			&0					&-\frac{1}{2}\rho(1+\beta)		&0	\\
-\frac{1}{2}(1+\rho)	&1						&0					&-\frac{1}{2}(1+\rho)(1+\beta)&0	\\
0				&0						&1					&-\frac{1}{2}(1+\beta)		&0	\\
-\frac{1}{2}\rho(1+\beta)	&-\frac{1}{2}(1+\rho)(1+\beta)&-\frac{1}{2}(1+\beta)	&1					&0	\\
0				&0						&0					&0					&1
\end{array}\right).
\end{align*}
If $\beta$ and $\rho$ are sufficiently small, %specifically if $0<\beta=\rho<0.1291$,
then the above matrix is SPD, and there is a $\sigma>0$ such that
\begin{align*}
(\widehat{\mathcal{B}_L^E}\mathcal{A}^E\bm{x},\bm{x})_{\widehat{\mathcal{B}_D^E}^{-1}}
&\geq \sigma\left(\|\bm{u}\|_{\AuE}^2 + \alpha^2 \|(\BuE)^T p\|_{S_u^E}^2 + \|p\|_{(\ApE)}^2
		+ \tau^2 \|B_{\bm w} \bm{w}\|_{S_p^E}^2\right .\\
		&\qquad\quad \left .  + ~\tau\|\bm{w}\|_{M_{\bm w}}^2 \right)\\
&\geq \sigma\left( (1-\rho) \|\bm{u}\|_{(S_u^E)^{-1}}^2 + (1-\rho)\frac{\gamma_B^2}{2\eta^2}\frac{\alpha^2}{\zeta^2} \|p\|_{M_p}^2
		 \right.\\
		&\qquad\quad\left. - ~(1-\rho)\frac{\alpha^2}{2}(D_{bb}^{-1} B_b^T p, B_b^T p) + \|p\|_{(\ApE)}^2 + \tau^2 \|B_{\bm w} \bm{w}\|_{S_p^E}^2 \right .\\
		&\qquad\quad\left. + ~\tau\|\bm{w}\|_{M_{\bm w}}^2 \right)\\
&\geq \sigma\left( (1-\rho) \|\bm{u}\|_{(S_u^E)^{-1}}^2 + (1-\rho)\frac{\gamma_B^2}{2\eta^2}\frac{\alpha^2}{\zeta^2} \|p\|_{M_p}^2
		 \right.\\
		&\qquad\quad\left. + ~(1+\rho)\frac{\alpha^2}{2}(D_{bb}^{-1} B_b^T p, B_b^T p) + \frac{1}{M}\|p\|_{M_p}^2 + \tau^2 \|B_{\bm w} \bm{w}\|_{S_p}^2\right .\\
		&\qquad\quad \left. + ~\tau\|\bm{w}\|_{M_{\bm w}}^2 \right)\\
&\geq \sigma\left( (1-\rho) \|\bm{u}\|_{(S_u^E)^{-1}}^2 + \frac{(1-\rho)(1-\beta)}{2}\min(1,\frac{\gamma_B^2}{\eta^2})\|p\|_{(S_p^E)^{-1}}^2 \right.\\
		&\quad\quad\left. + \tau^2 (1-\beta) \left(\frac{\alpha^2}{\zeta^2}+\frac{1}{M}\right)^{-1} \|B_{\bm{w}} \bm{w}\|_{M_p^{-1}}^2 + \tau\|\bm{w}\|_{M_{\bm w}}^2 \right)\\
&\geq \Sigma(\bm{x},\bm{x})_{(\widehat{\mathcal{B}_D^E})^{-1}},
\end{align*}
where $\Sigma = \sigma\frac{(1-\rho)(1-\beta)}{2}\min\{1,\frac{\gamma_B^2}{\eta^2}\}$. This provides the lower bound.
The upper bound follows from the continuity of each term and the Cauchy-Schwarz inequality.
\end{proof}
\begin{remark}
Values for $\beta$ and $\rho$ that are sufficiently small can be calculated numerically.
For example, if $0 < \beta  = \rho < 0.1291$, then the above matrix is SPD.
\end{remark}
%%%%%%%%%%%
%%%%%%%%%%%
%%%%%%%%%%%

Similar arguments can also be applied to block upper triangular
preconditioners.  We consider the following upper preconditioner for $\mathcal{A}^E$ in \eqref{block_form_elim},
\begin{equation}\label{eqn:blk_up_tri}
	\mathcal{B}_U^E = \left(\begin{array}{ccc}
	\AuE & \alpha (\BuE)^T & 0 \\
	0 & \ApEs & -\tau B_{\bm w}  \\
	0 & 0 & \tau M_{\bm w} + \tau^2 \left(\frac{\alpha^2}{\zeta^2}+\frac{1}{M}\right)^{-1} A_{\bm w}
	\end{array}\right)^{-1}
\end{equation}
where, again,
$\AuE =  A_{ll}-A_{bl}^T D_{bb}^{-1}A_{bl}$,
$\ApEs = \left(\frac{\alpha^2}{\zeta^2} + \frac{1}{M}\right) M_p+\alpha^2 B_b D_{bb}^{-1} B_{b}^T$, and\\
$\BuE = B_l- B_b D_{bb}^{-1} A_{bl}$ when preconditioning the bubble-eliminated case.
The corresponding inexact preconditioner is given by:
\begin{equation}\label{eqn:blk_up_tri_M}
	\widehat{\mathcal{B}_U^E} = \left(\begin{array}{ccc}
	{S_{\bm u}^E}^{-1} & \alpha (\BuE)^T& 0 \\
	0 & {S_{p}^E}^{-1} & -\tau B_{\bm w} \\
	0 & 0 & S_{\bm w}^{-1}
	\end{array}\right)^{-1}.
\end{equation}
Parameter robustness is obtained for the block upper
triangular preconditioners using the following theorems.  The proofs
are similar in concept to the proofs for Theorem~\ref{thm:BL_elim}
and~\ref{thm:BL_elim_inexact} and are, therefore, omitted.
\begin{theorem} %\textcolor{red}{ASSUMPTIONS!}
	Assuming a shape regular mesh and the discretization described above,
	then there exist constants $\Sigma$ and $\Upsilon$, independent of discretization and physical parameters,
	such that, for any $\bm{x}' = (\mathcal{B}_U^E)^{-1}\bm{x}$ with $\bm{x}=(\bm{u},p,\bm{w})^T\neq\bm{0}$,
	\[
	\Sigma\leq\frac{\left(\mathcal{A}^E\mathcal{B}_U^E\bm{x}',\bm{x}'\right)_{(\mathcal{B}_D^E)}}{\left(\bm{x}',\bm{x}'\right)_{(\mathcal{B}_D^E)}},\
	\frac{\|\mathcal{A}^E\mathcal{B}_U^E\bm{x}'\|_{(\mathcal{B}_D^E)}}{\|\bm{x}'\|_{(\mathcal{B}_D^E)}}\leq\Upsilon.
	\]
\end{theorem}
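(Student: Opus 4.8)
The plan is to observe that this statement is precisely the right‑preconditioner FOV‑equivalence of $\mathcal{B}_U^E$ with $\mathcal{A}^E$ measured in the $\mathcal{B}_D^E$‑inner product (in the sense defined above for right preconditioners), and to mirror the argument used for the block lower triangular case in Theorem~\ref{thm:BL_elim}, with the off‑diagonal couplings transposed. First I would substitute $\bm{x}'=(\mathcal{B}_U^E)^{-1}\bm{x}$, which is a bijection of $\bm{X}_h^E$ onto itself and satisfies $\mathcal{A}^E\mathcal{B}_U^E\bm{x}'=\mathcal{A}^E\bm{x}$; it then suffices to bound $(\mathcal{A}^E\bm{x},\bm{x}')_{\mathcal{B}_D^E}$ from below and $\|\mathcal{A}^E\bm{x}\|_{\mathcal{B}_D^E}$ from above, each by a multiple of $\|\bm{x}'\|_{\mathcal{B}_D^E}$. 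Using the block forms in \eqref{block_form_elim}, \eqref{eqn:blk_up_tri}, and \eqref{blk-diag-precE}, a direct computation — in which the antisymmetric $\bm{u}$--$p$ and $p$--$\bm{w}$ couplings cancel exactly as in Theorem~\ref{thm:BL_elim} — should yield, for $\bm{x}=(\bm{u},p,\bm{w})^T$,
\begin{align*}
  (\mathcal{A}^E\mathcal{B}_U^E\bm{x}',\bm{x}')_{\mathcal{B}_D^E}
  &= \|\bm{u}\|_{\AuE}^2 + \alpha((\BuE)^Tp,\bm{u}) + \alpha^2\|(\BuE)^Tp\|_{(\AuE)^{-1}}^2 + \|p\|_{\ApE}^2 \\
  &\quad + \tau\alpha(\BuE\bm{u},(\ApEs)^{-1}B_{\bm w}\bm{w}) - \tau(\ApE p,(\ApEs)^{-1}B_{\bm w}\bm{w}) \\
  &\quad + \tau^2\|B_{\bm w}\bm{w}\|_{(\ApEs)^{-1}}^2 + \tau\|\bm{w}\|_{M_{\bm w}}^2 .
\end{align*}
The only structural difference from Theorem~\ref{thm:BL_elim} is that here $\bm{u}$ (rather than $p$) is coupled directly to $B_{\bm w}\bm{w}$.

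Next I would estimate the three indefinite cross terms by the Cauchy--Schwarz inequality in the $(\AuE)^{-1}$‑ and $(\ApEs)^{-1}$‑inner products, together with \eqref{eqn:Bu_leq_Au}, \eqref{eqn:Aps_leq_Mp}, \eqref{eqn:Aps_leq_Ap}, and the elementary bound $\tfrac{\alpha}{\zeta}(\tfrac{\alpha^2}{\zeta^2}+\tfrac1M)^{-1/2}<1$, so that each is controlled by a product of the quantities $v_1=\|\bm{u}\|_{\AuE}$, $v_2=\alpha\|(\BuE)^Tp\|_{(\AuE)^{-1}}$, $v_3=\|p\|_{\ApE}$, $v_4=\tau\|B_{\bm w}\bm{w}\|_{(\ApEs)^{-1}}$, $v_5=\sqrt{\tau}\|\bm{w}\|_{M_{\bm w}}$. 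This reduces the right‑hand side to $\bm{v}^T\mathcal{Q}\bm{v}$ with $\bm{v}=(v_1,\dots,v_5)^T$ and
\[
  \mathcal{Q}=\begin{pmatrix}
  1 & -\tfrac12 & 0 & -\tfrac12 & 0\\
  -\tfrac12 & 1 & 0 & 0 & 0\\
  0 & 0 & 1 & -\tfrac12 & 0\\
  -\tfrac12 & 0 & -\tfrac12 & 1 & 0\\
  0 & 0 & 0 & 0 & 1
  \end{pmatrix},
\]
whose leading principal minors are $1,\tfrac34,\tfrac34,\tfrac{5}{16},\tfrac{5}{16}$, all positive; hence $\mathcal{Q}$ is SPD and the numerator is $\ge\sigma(v_1^2+\dots+v_5^2)$ with $\sigma=\lambda_{\min}(\mathcal{Q})>0$ an absolute constant. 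Verifying positive‑definiteness of this particular $\mathcal{Q}$ — whose sparsity pattern differs from the one in Theorem~\ref{thm:BL_elim} because of the relocated $\bm{u}$--$\bm{w}$ coupling — is the one genuinely new point; everything preceding it is bookkeeping.

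To finish the lower bound I would convert $\sigma\sum_i v_i^2$ into a multiple of $\|\bm{x}'\|_{\mathcal{B}_D^E}^2$ exactly as at the end of the proof of Theorem~\ref{thm:BL_elim}: after absorbing half of $v_2^2$, \eqref{eqn:Bp_to_p_elim} and the identity $\ApE=\tfrac1M M_p+\alpha^2 B_b D_{bb}^{-1}B_b^T$ give $\tfrac12 v_2^2+v_3^2\ge\tfrac12\min\{1,\gamma_B^2/\eta^2\}\|p\|_{\ApEs}^2$, while the bubble‑elimination bound $\alpha^2 B_b D_{bb}^{-1}B_b^T\preceq\tfrac{\alpha^2}{\zeta^2}M_p\preceq(\tfrac{\alpha^2}{\zeta^2}+\tfrac1M)M_p$ (so $\ApEs\preceq 2(\tfrac{\alpha^2}{\zeta^2}+\tfrac1M)M_p$) gives $v_4^2\ge\tfrac12\tau^2(\tfrac{\alpha^2}{\zeta^2}+\tfrac1M)^{-1}\|B_{\bm w}\bm{w}\|_{M_p^{-1}}^2$. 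Expanding $\|\bm{x}'\|_{\mathcal{B}_D^E}^2$ directly and bounding its cross terms (which carry only parameter‑independent constants) by Cauchy--Schwarz shows it is in turn controlled by $v_1^2+\|p\|_{\ApEs}^2+\tau^2(\tfrac{\alpha^2}{\zeta^2}+\tfrac1M)^{-1}\|B_{\bm w}\bm{w}\|_{M_p^{-1}}^2+v_5^2$, and combining these inequalities yields the lower bound with $\Sigma=c\,\sigma\min\{1,\gamma_B^2/\eta^2\}$ for an absolute constant $c$, independent of the discretization and physical parameters.

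The upper bound $\|\mathcal{A}^E\mathcal{B}_U^E\bm{x}'\|_{\mathcal{B}_D^E}\le\Upsilon\|\bm{x}'\|_{\mathcal{B}_D^E}$ would follow from the continuity in the relevant weighted norms of each bilinear form appearing in $\mathcal{A}^E$ and $\mathcal{B}_U^E$, together with Cauchy--Schwarz, just as in the continuity halves of Theorems~\ref{thm:diag_wellposed} and~\ref{thm:ElimWP}; this part is routine and yields $\Upsilon$ depending only on shape regularity. I expect the main obstacle to be the careful bookkeeping in the first two steps — keeping track of which indefinite cross terms appear and in what ``direction'' in the upper‑triangular case — together with re‑verifying that the (new) constant matrix $\mathcal{Q}$ is positive definite; the remainder is essentially a re‑run of the lower‑triangular argument.
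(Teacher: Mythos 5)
Your proposal is correct and is essentially the argument the paper has in mind: the paper omits this proof, stating it is similar in concept to Theorem~\ref{thm:BL_elim}, and your version carries out exactly that analogue — the transposed cross terms, the relocated $\bm{u}$--$\bm{w}$ coupling, the new matrix $\mathcal{Q}$ (whose leading principal minors $1,\tfrac34,\tfrac34,\tfrac{5}{16},\tfrac{5}{16}$ you verify correctly), and the final conversion to the $\mathcal{B}_D^E$-inner product of $\bm{x}'$. No substantive gaps; your handling of $\ApEs\preceq 2\,c_p^{-1}M_p$ for the $v_4$ term is if anything more careful than the corresponding step in the paper's lower-triangular proof.
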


\begin{theorem} %\peter{Values for $\beta$ and $\rho$.}
	Assuming (\ref{eqn:equivalentHw}), (\ref{eqn:equivalentHuE}), and (\ref{eqn:equivalentHpE}) hold,
	$\| I-\AuE S_{\bm u}^E\|_{A_{\bm u}}\leq\rho$ and $\| I-\ApEs S_{p}^E\|_{\ApEs}\leq\beta$ with $\rho>0$ and $\beta>0$ sufficiently small,
	there exist constants $\Sigma$ and $\Upsilon$, independent of discretization and physical parameters,
	such that, for any $\bm{x}' = (\mathcal{B}_U^E)^{-1}\bm{x}$ with $\bm{x}=(\bm{u},p,\bm{w})^T\neq\bm{0}$,
	\[
	\Sigma\leq\frac{\left(\mathcal{A}^E\widehat{\mathcal{B}_U^E}\bm{x}',\bm{x}'\right)_{(\widehat{\mathcal{B}_D^E})}}
		{\left(\bm{x}',\bm{x}'\right)_{(\widehat{\mathcal{B}_D^E})}},\
	\frac{\|\mathcal{A}^E\widehat{\mathcal{B}_U^E}\bm{x}'\|_{(\widehat{\mathcal{B}_D^E})}}
		{\|\bm{x}'\|_{(\widehat{\mathcal{B}_D^E})}}\leq\Upsilon.
	\]
\end{theorem}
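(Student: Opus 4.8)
The plan is to verify the two Field-of-Value inequalities for the right preconditioner $\widehat{\mathcal{B}_U^E}$ measured in the $\widehat{\mathcal{B}_D^E}$-inner product, i.e., the right-preconditioned form of FOV equivalence introduced in Section~\ref{sec:blocktri}, which is the one relevant for GMRES. The parametrization $\bm{x}' = (\mathcal{B}_U^E)^{-1}\bm{x}$ in the statement only asserts that $\bm{x}'$ ranges over the whole space; it is equivalent, and slightly cleaner, to take $\bm{x}' = (\widehat{\mathcal{B}_U^E})^{-1}\bm{x}$, so that $\mathcal{A}^E\widehat{\mathcal{B}_U^E}\bm{x}' = \mathcal{A}^E\bm{x}$. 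I would then mirror the proof of Theorem~\ref{thm:BL_elim_inexact} line by line, replacing the block lower-triangular factor of \eqref{eqn:blk_low_tri_ME} by the block upper-triangular factor of \eqref{eqn:blk_up_tri_M} and letting the preconditioner act on the right: expanding $(\mathcal{A}^E\bm{x},(\widehat{\mathcal{B}_U^E})^{-1}\bm{x})_{\widehat{\mathcal{B}_D^E}}$ by direct block multiplication produces the same five ``diagonal'' square terms $\|\bm{u}\|_{\AuE}^2$, $\alpha^2\|(\BuE)^Tp\|_{S_{\bm u}^E}^2$, $\|p\|_{\ApE}^2$, $\tau^2\|B_{\bm w}\bm{w}\|_{S_p^E}^2$, $\tau\|\bm{w}\|_{M_{\bm w}}^2$ that appear there, plus skew cross terms, the only changes being that the coupling blocks now appear transposed and the inexact-solve residuals act on the right (so $\AuE S_{\bm u}^E$ and $\ApEs S_p^E$ replace $S_{\bm u}^E\AuE$ and $S_p^E\ApEs$).

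Next I would bound the skew cross terms by Cauchy--Schwarz, use the residual hypotheses $\|I - \AuE S_{\bm u}^E\|\le\rho$ and $\|I - \ApEs S_p^E\|\le\beta$ to pass between the $S_{\bm u}^E$/$S_p^E$ norms and the $\AuE$/$\ApEs$ norms, and invoke the auxiliary estimates \eqref{eqn:Bu_leq_Au}, \eqref{eqn:Aps_leq_Mp}, \eqref{eqn:Aps_leq_Ap}, Lemma~\ref{lem:stokesP}, and the key bound \eqref{eqn:Bp_to_p_elim}, exactly as in the block lower-triangular case. This reduces the estimate to $(\mathcal{A}^E\widehat{\mathcal{B}_U^E}\bm{x}',\bm{x}')_{\widehat{\mathcal{B}_D^E}} \ge \bm{\xi}^{T}\mathcal{Q}\,\bm{\xi}$, with $\bm{\xi}$ the vector of the five norms above and $\mathcal{Q}$ a $5\times5$ symmetric matrix depending on $\rho,\beta$; for $\rho,\beta$ below an explicit threshold (of the same type as in the remark following Theorem~\ref{thm:BL_elim_inexact}) the matrix $\mathcal{Q}$ is positive definite, and translating back through the definitions of $\widehat{\mathcal{B}_D^E}$, $\ApE$, $\ApEs$ and the spectral equivalences \eqref{eqn:equivalentHw}, \eqref{eqn:equivalentHuE}, \eqref{eqn:equivalentHpE} yields the lower constant $\Sigma$. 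The upper constant $\Upsilon$ follows from the continuity of each block together with Cauchy--Schwarz, as in Theorems~\ref{thm:BL_elim} and~\ref{thm:BL_elim_inexact}.

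A cleaner route, which makes the phrase ``similar in concept'' precise, avoids the computation: $\mathcal{A}^E$ is $J$-symmetric, $(\mathcal{A}^E)^T = J\,\mathcal{A}^E J$ with $J = \diag(\mathbf{I},-\mathbf{I},\mathbf{I})$, owing to the skew off-diagonal blocks $\alpha(\BuE)^T,\,-\alpha\BuE$ and $-\tau B_{\bm w},\,\tau B_{\bm w}^{T}$ in \eqref{block_form_elim}; the block-diagonal $\widehat{\mathcal{B}_D^E}$ commutes with $J$; and, comparing \eqref{eqn:blk_low_tri_ME} with \eqref{eqn:blk_up_tri_M}, $\widehat{\mathcal{B}_U^E} = J\,(\widehat{\mathcal{B}_L^E})^{T} J$. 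Under the bijective change of variables $\bm{w} = \widehat{\mathcal{B}_D^E} J \bm{x}'$ one obtains $(\mathcal{A}^E\widehat{\mathcal{B}_U^E}\bm{x}',\bm{x}')_{\widehat{\mathcal{B}_D^E}} = (\widehat{\mathcal{B}_L^E}\mathcal{A}^E\bm{w},\bm{w})_{(\widehat{\mathcal{B}_D^E})^{-1}}$ and $(\bm{x}',\bm{x}')_{\widehat{\mathcal{B}_D^E}} = (\bm{w},\bm{w})_{(\widehat{\mathcal{B}_D^E})^{-1}}$, while the operator-norm identity $\|M^{T}\|_{\mathcal{N}} = \|M\|_{\mathcal{N}^{-1}}$ turns the norm ratio into $\|\widehat{\mathcal{B}_L^E}\mathcal{A}^E\bm{w}\|_{(\widehat{\mathcal{B}_D^E})^{-1}}/\|\bm{w}\|_{(\widehat{\mathcal{B}_D^E})^{-1}}$; both quantities are already controlled by Theorem~\ref{thm:BL_elim_inexact}. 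In either route the point that needs the most care, and which I expect to be the main obstacle, is the bookkeeping of the \emph{right-applied} inexact solves --- keeping the residual operators $\AuE S_{\bm u}^E$ and $\ApEs S_p^E$ (rather than $S_{\bm u}^E\AuE$, $S_p^E\ApEs$) in the appropriate norms and confirming that the smallness thresholds on $\rho,\beta$ are unchanged --- precisely the distinction already present between Theorem~\ref{thm:BL_inexact} and its stated block-upper-triangular counterpart. All constants depend only on the Stokes-stability constant $\gamma_B$, the shape-regularity constant $\eta$, and the spectral-equivalence constants in \eqref{eqn:equivalentHw}--\eqref{eqn:equivalentHpE}, hence are independent of $h$, $\tau$, and the physical parameters.
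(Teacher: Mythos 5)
Your proposal is correct. Note that the paper itself gives no proof of this statement: it is one of the two block upper triangular results that are explicitly omitted as ``similar in concept'' to Theorems~\ref{thm:BL_elim} and~\ref{thm:BL_elim_inexact}, so your first route --- repeating the direct computation of Theorem~\ref{thm:BL_elim_inexact} with the coupling blocks transposed and the inexact residuals $\AuE S_{\bm u}^E$, $\ApEs S_p^E$ acting on the right, then assembling the same five-norm vector against a $5\times 5$ matrix $\mathcal{Q}(\rho,\beta)$ --- is precisely the argument the authors intend, and your use of \eqref{eqn:Bu_leq_Au}, \eqref{eqn:Aps_leq_Mp}, \eqref{eqn:Aps_leq_Ap} and \eqref{eqn:Bp_to_p_elim} is the right toolkit for it. Your second route is genuinely different and, in my view, the better write-up: with $J=\diag(I,-I,I)$ one indeed has $(\mathcal{A}^E)^T=J\mathcal{A}^EJ$, $\widehat{\mathcal{B}_U^E}=J(\widehat{\mathcal{B}_L^E})^TJ$, and $J$ commutes with $\widehat{\mathcal{B}_D^E}$, so the substitution $\bm{w}=\widehat{\mathcal{B}_D^E}J\bm{x}'$ converts both the Rayleigh quotient and the norm ratio for the right upper preconditioner in the $\widehat{\mathcal{B}_D^E}$-inner product into the corresponding quantities for the left lower preconditioner in the $(\widehat{\mathcal{B}_D^E})^{-1}$-inner product (using $\|M^T\|_{\mathcal{N}}=\|M\|_{\mathcal{N}^{-1}}$), so the constants $\Sigma,\Upsilon$ of Theorem~\ref{thm:BL_elim_inexact} transfer verbatim; this buys an actual reduction instead of a parallel computation, at the cost of having to observe the $J$-symmetry structure. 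One detail worth writing out to close the duality route: the hypotheses here control the right residuals, while Theorem~\ref{thm:BL_elim_inexact} assumes $\|I-S_{\bm u}^E\AuE\|_{\AuE}\leq\rho$ and $\|I-S_p^E\ApEs\|_{\ApEs}\leq\beta$; since $I-\AuE S_{\bm u}^E=\AuE\bigl(I-S_{\bm u}^E\AuE\bigr)(\AuE)^{-1}$ is similar to a matrix that is symmetric in the $\AuE$-inner product, whose induced norm therefore equals its spectral radius, one gets $\|I-S_{\bm u}^E\AuE\|_{\AuE}\leq\|I-\AuE S_{\bm u}^E\|_{\AuE}\leq\rho$ (and likewise for the pressure block), so the stated assumptions do imply those of Theorem~\ref{thm:BL_elim_inexact} with the same thresholds --- exactly the ``bookkeeping of the right-applied inexact solves'' you flagged as the delicate point.
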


This shows that the constructed block preconditioners are robust with respect to the physical and discretization parameters of the bubble-eliminated system, (\ref{block_form_elim}).

%%%%%%%%%%%%%%%%%%%%%%%%%%%%%%%%%%%%%%%%%%%%%%%%%%%%%%%%%%%%%%%%%%%%%%%%%%%%%
%%%%%%%%%%%%%%%%%%%%%%%%%%%%%%%%%%%%%%%%%%%%%%%%%%%%%%%%%%%%%%%%%%%%%%%%%%%%%
%In the following section, we compare the performance of
%  the above preconditioners.  We compare 12 in total:\\
%{\small
%\begin{tabular}{lc|lc}
%    $\mathcal{B}_D$&exact block diagonal for $\mathcal{A}$&
%    $\mathcal{B}_D^E$& exact block diagonal for $\mathcal{A}^E$\\
%    $\widehat{\mathcal{B}_D}$&inexact block diagonal for $\mathcal{A}$&
%    $\widehat{\mathcal{B}_D}^E$&inexact block diagonal for $\mathcal{A}^E$\\
%    $\mathcal{B}_L$&exact block lower triangular for $\mathcal{A}$&
%    $\mathcal{B}_L^E$& exact block lower triangular for $\mathcal{A}^E$\\
%    $\widehat{\mathcal{B}_L}$&inexact block lower triangular for $\mathcal{A}$&
%     $\widehat{\mathcal{B}_L}^E$&inexact block lower triangular for $\mathcal{A}^E$\\
%    $\mathcal{B}_U$&exact block upper triangular for $\mathcal{A}$&
%     $\mathcal{B}_U^E$&exact block upper triangular for $\mathcal{A}^E$\\
%    $\widehat{\mathcal{B}_U}$&inexact block upper triangular for $\mathcal{A}$&
%    $\widehat{\mathcal{B}_U}^E$&inexact block upper triangular for $\mathcal{A}^E$\\
%\end{tabular}
%}\\
%All will be shown to be robust with respect to the physical and
%discretization parameters of the problem.  While the performance of
%the preconditioners applied to the bubble-eliminated formulations
%degrades slightly, the overall cost is still comparable.
%%%%%%%%%%%%%%%%%%%%%%%%%%%%%%%%%%%%%%%%%%%%%%%%%%%%%%%%%%%%%%%%%%%%%%%%%%%%%%%

%%%%%%%%%%
\section{Numerical Results}\label{sec:num}
%%%%%%%%%%
In this section, we illustrate the convergence benefits obtained using
the preconditioners presented above. All test problems were
implemented in the HAZmath library
\cite{2014AdlerJ_HuX_ZikatanovL-aa}, which contains routines for
finite elements, multilevel solvers, and graph
algorithms.
The numerical tests were performed on a workstation with an 8-core 3GHz Intel Xeon ``Sandy Bridge'' CPU and 32 GB of RAM per core.

For each test we use flexible GMRES to solve the linear system
obtained from both
the bubble-enriched P1-RT0-P0 discretization, $\mathcal{A}$
\eqref{block_form}, and the bubble-eliminated discretization,
$\mathcal{A}^E$ \eqref{block_form_elim}. A
stopping tolerance of $10^{-8}$ was used for the relative residual of the linear system\edit{, measured relative to the norm of the right hand side}.
For the discretization parameters, tests cover different mesh sizes and different time step sizes.
To show robustness with respect to the physical parameters, the permeability, $\bm{K}$,
and the Poisson ratio $\nu$ are varied.  \edit{We also consider a 3D test problem where there are jumps in the permeability.}
In all test cases we consider a diagonal permeability tensor $\bm{K} = k\bm{I}$.% with constant $k$.
%So for the physical parameters, it is the constant $k$ that is varied.
The exact solves for the blocks in $\mathcal{B}_D$, $\mathcal{B}_L$,
and $\mathcal{B}_U$ are done using the
UMFPACK library \cite{TADavis2,TADavis1,TADavis4,TADavis3}.  For the
inexact blocks, $S_{\bm u}$ and $S_{\bm u}^{E}$ are inverted using GMRES preconditioned with unsmoothed aggregation AMG in a V-cycle, solved to a relative residual tolerance of $10^{-3}$.
The $S_{\bm w}$ block is solved using an auxiliary space preconditioned GMRES to a relative residual tolerance of $10^{-3}$ \cite{Arnold2000,HiptmairXu2007,KolevVassilevski}.
Using a piecewise constant finite-element space for pressure results in a diagonal matrix for $M_p$, so the action of $S_p$ is directly computed in the full bubble case. In the bubble-eliminated case, $S_p^E$ is inverted using GMRES preconditioned with unsmoothed aggregation AMG in a V-cycle, solved to a relative residual tolerance of $10^{-3}$.

\subsection{Two-Dimensional Test Problem}
First, we consider the Mandel problem in two-dimensions, which models an infinitely long saturated porous slab sandwiched between a top and bottom rigid frictionless plate, and is an important benchmarking tool as the analytical solution is known \cite{Abousleiman1996}. At time $t=0$, each plate is loaded with a constant vertical force of magnitude $2F$ per unit length as shown in Figure~\ref{fig:mandel}. The analytical solution for pressure is given by
\begin{equation}
	p(x,y,t) = 2 p_0 \sum_{n=1}^{\infty} \frac{\sin{\alpha_n}}{\alpha_n - \sin{\alpha_n}\cos{\alpha_n}}\left(\cos{\frac{\alpha_n x}{a}} - \cos{\alpha_n}\right)
	\exp\left(\frac{-\alpha_n^2 c t}{a^2}\right),
\end{equation}
where $p_0 = \frac{1}{3a}B(1+\nu_u)F$, $B=1$ is the Skempton's coefficient, $\nu_u=\frac{3\nu+B(1-2\nu)}{3-B(1-2\nu)}$ is the undrained Poisson ratio,
$c$ is the consolidation coefficient given by $c=K(\lambda+2\mu)$, and $\alpha_n$ are the positive roots to the nonlinear equation,
\[\tan\alpha_n = \frac{1-\nu}{\nu_u-\nu}\alpha_n.\]
Due to symmetry of the problem we only need to solve in the top right quadrant, defined as $\Omega = (0,1)\times(0,1)$.
We cover $\Omega$ with a uniform triangular grid by dividing an $N\times N$ uniform square grid into right triangles, where the mesh spacing is defined by $h=\frac{1}{N}$.
For the material properties, $\mu_f=1$, $\alpha=1$, and $M=10^6$, the Lam\'e coefficients are computed in terms of the Young modulus, $E=10^4$, and the Poisson ratio, $\nu$: $\lambda = \frac{E \nu}{(1-2\nu)(1+\nu)}$ and $\mu = \frac{E}{1+2\nu}$.
%We consider a diagonal permeability tensor $\bm{K} = k\bm{I}$ with constant $k$.

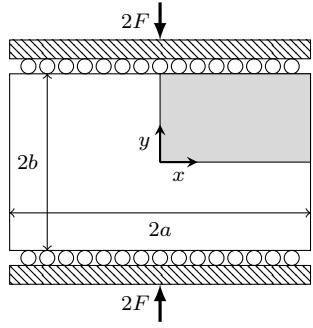
\begin{figure}[h!]
\begin{center}
\begin{tikzpicture}
	\def\xLen{4}
	\def\yLen{3}
	\def\bdryH{0.25} %Boundary Height
	\def\circR{0.1} % Circle Radius
	\draw[pattern=north west lines] (0,\yLen) rectangle (\xLen,\yLen+\bdryH);
	\draw[pattern=north west lines] (0,0) rectangle (\xLen,\bdryH);
	\foreach \circCenter in {0.25,0.5,...,3.75}
	{
		\draw (\circCenter,\yLen-\circR) circle (\circR);
		\draw (\circCenter,\bdryH+\circR) circle (\circR);
	}
	% Draw material
	\draw (0,\bdryH+2*\circR) rectangle (\xLen,\yLen-2*\circR);
	% Draw the domain
	\draw[fill=gray!30] (\xLen/2,\yLen/2+\bdryH/2) rectangle (\xLen,\yLen-2*\circR);
	% Draw the axes
	\def\axesLen{0.5}
	\draw[thick,-stealth] (\xLen/2,\yLen/2+\bdryH/2) -- (\xLen/2 + \axesLen,\yLen/2+\bdryH/2) node[midway,below] {\footnotesize $x$};
	\draw[thick,-stealth] (\xLen/2,\yLen/2+\bdryH/2) -- (\xLen/2, \yLen/2+\bdryH/2 + \axesLen) node[midway,left] {\footnotesize $y$};
	% Draw dimensions
	\def\dPos{0.5}
	\draw[<->] (0,\dPos+\bdryH+2*\circR) -- (\xLen,\dPos+\bdryH+2*\circR) node[midway,below] {\footnotesize $2a$};
	\draw[<->] (\dPos,\bdryH+2*\circR) -- (\dPos,\yLen-2*\circR) node[midway,left] {\footnotesize $2b$};
	% Draw Force arrows
	\def\FLen{0.5}
	\draw[very thick, -latex] (\xLen/2,-\FLen) -- (\xLen/2, 0) node[midway,left] {\footnotesize $2F$};
	\draw[very thick, -latex] (\xLen/2,\yLen+\bdryH+\FLen) -- (\xLen/2, \yLen+\bdryH) node[midway,left] {\footnotesize $2F$};
\end{tikzpicture}
\end{center}
\caption{2D physical and computational domain for Mandel's problem.}
\label{fig:mandel}
\end{figure}

Table~\ref{tab:block-prec-full-2d-ht} shows iterations counts for the block preconditioners on the full bubble system for different mesh sizes and time-step sizes.  Here, we take one time step using Backward Euler.
The physical parameters used in these tests were $\nu=0.0$ and $k=10^{-6}$.
We see from the relatively consistent iteration counts that the preconditioned system is robust with respect to the discretization parameters.
The block upper and lower triangular preconditioners contain more coupling information than the block diagonal preconditioners, and as a result we see that they preform better than the block diagonal preconditioners.
\begin{table}[h!]
	\setlength{\tabcolsep}{3pt}
	\footnotesize
	\begin{center}
		\caption{Full bubble system. Iteration counts for the block preconditioners on the 2D Mandel problem with varying discretization parameters.}\label{tab:block-prec-full-2d-ht}
		\begin{tabular}{|l || c c c c c|}
			\hline %\hline
			&\multicolumn{5}{ c| }{$\mathcal{B}_D$}\\ \hline
			\backslashbox{$\tau$}{$h$} & $\frac{1}{8}$ & $ \frac{1}{16}$ & $\frac{1}{32}$  & $\frac{1}{64}$ & $\frac{1}{128}$  \\
			\hline
			%$0.1$    & 28 & 35 & 37 & 38 & 37 \\
			%$0.01$   & 21 & 22 & 28 & 33 & 35 \\
			%$0.001$  & 19 & 19 & 19 & 22 & 27 \\
			%$0.0001$ & 16 & 17 & 17 & 17 & 17 \\
			$0.1$    & 39 & 40 & 40 & 40 & 38 \\
			$0.01$   & 26 & 34 & 39 & 39 & 38 \\
			$0.001$  & 23 & 23 & 28 & 34 & 37 \\
			$0.0001$ & 21 & 21 & 21 & 21 & 21 \\
			% [1ex] adds vertical space
			\hline %\hline
		\end{tabular}
		\begin{tabular}{|c c c c c|}
			\hline
			\multicolumn{5}{ |c| }{$\mathcal{B}_L$}\\ \hline
			$\frac{1}{8}$ \hspace{-50pt}\phantom{\backslashbox{$\tau$}{Mesh}}  & $\frac{1}{16}$ & $\frac{1}{32}$  & $\frac{1}{64}$ & $\frac{1}{128}$ \\
			\hline
			%15 & 17 & 17 & 17 & 16 \\
			%10 & 12 & 15 & 16 & 16 \\
			%8  & 8  & 9  & 12 & 14 \\
			%7  & 7  & 7  & 7  & 7  \\
			19 & 19 & 18 & 17 & 17 \\
			15 & 18 & 19 & 18 & 17 \\
			11  & 12  & 15  & 17 & 18 \\
			11  & 10  & 10  & 13  & 15  \\
			\hline %\hline
		\end{tabular}
		\begin{tabular}{|c c c c c|}
			\hline%\hline
			\multicolumn{5}{ |c| }{$\mathcal{B}_U$}\\ \hline
            %$\frac{1}{4}$ \hspace{-58pt}\phantom{\backslashbox{$\tau$}{Mesh}}  & $\frac{1}{8}$ & $\frac{1}{16}$  & $\frac{1}{32}$ & $\frac{1}{64}$ \\
			$\frac{1}{8}$ \hspace{-50pt}\phantom{\backslashbox{$\tau$}{Mesh}}  & $\frac{1}{16}$ & $\frac{1}{32}$  & $\frac{1}{64}$ & $\frac{1}{128}$ \\
			\hline
			%15 & 17 & 17 & 17 & 17 \\
			%9  & 12 & 14 & 16 & 16 \\
			%7  & 7  & 8  & 11 & 13 \\
			%7  & 6  & 6  & 6  & 8  \\
			19 & 19 & 19 & 18 & 17 \\
			14  & 17 & 18 & 18 & 17 \\
			10  & 11  & 14  & 17 & 17 \\
			8  & 9  & 9  & 12  & 14  \\
			\hline %\hline
		\end{tabular}
		%%%%%%%%%
		\linebreak
		%%%%%%%%%
		\begin{tabular}{|l ||c c c c c|}
			\hline%\hline
            &\multicolumn{5}{ c| }{$\widehat{\mathcal{B}_D}$}\\ \hline
			%$\frac{1}{4}$ \hspace{-58pt}\phantom{\backslashbox{$\tau$}{Mesh}}  & $\frac{1}{8}$ & $\frac{1}{16}$  & $\frac{1}{32}$ & $\frac{1}{64}$ \\
            \backslashbox{$\tau$}{$h$} & $\frac{1}{8}$ & $\frac{1}{16}$ & $\frac{1}{32}$  & $\frac{1}{64}$ & $\frac{1}{128}$ \\
			\hline
			%$0.1$    & 28 & 35 & 38 & 38 & 37 \\
			%$0.01$   & 21 & 22 & 28 & 33 & 35 \\
			%$0.001$  & 19 & 19 & 19 & 22 & 27 \\
			%$0.0001$ & 17 & 17 & 17 & 17 & 17 \\
			$0.1$    & 39 & 40 & 40 & 40 & 36 \\
			$0.01$   & 26 & 34 & 39 & 39 & 38 \\
			$0.001$  & 23 & 23 & 23 & 34 & 37 \\
			$0.0001$ & 21 & 22 & 21 & 23 & 29 \\
			\hline %\hline
		\end{tabular}
		\begin{tabular}{|c c c c c|}
			\hline%\hline
			\multicolumn{5}{ |c| }{$\widehat{\mathcal{B}_L}$}\\ \hline
			%$\frac{1}{4}$ \hspace{-58pt}\phantom{\backslashbox{$\tau$}{Mesh}}  & $\frac{1}{8}$ & $\frac{1}{16}$  & $\frac{1}{32}$ & $\frac{1}{64}$ \\
			$\frac{1}{8}$ \hspace{-50pt}\phantom{\backslashbox{$\tau$}{Mesh}}  & $\frac{1}{16}$ & $\frac{1}{32}$  & $\frac{1}{64}$ & $\frac{1}{128}$ \\
			\hline
			%15 & 17 & 17 & 17 & 16 \\
			%10 & 12 & 15 & 16 & 16 \\
			%8  & 8  & 10 & 12 & 14 \\
			%7  & 7  & 7  & 8  & 9  \\
			19 & 20 & 19 & 19 & 18 \\
			15 & 18 & 19 & 19 & 18 \\
			11  & 13  & 15 & 17 & 18 \\
			11  & 11  & 11  & 13  & 15  \\
			\hline %\hline
		\end{tabular}
		\begin{tabular}{|c c c c c|}
			\hline%\hline
			\multicolumn{5}{ |c| }{$\widehat{\mathcal{B}_U}$}\\ \hline
			%$\frac{1}{4}$ \hspace{-58pt}\phantom{\backslashbox{$\tau$}{Mesh}}  & $\frac{1}{8}$ & $\frac{1}{16}$  & $\frac{1}{32}$ & $\frac{1}{64}$ \\
			$\frac{1}{8}$ \hspace{-50pt}\phantom{\backslashbox{$\tau$}{Mesh}}  & $\frac{1}{16}$ & $\frac{1}{32}$  & $\frac{1}{64}$ & $\frac{1}{128}$ \\
			\hline
			%15 & 17 & 18 & 17 & 17 \\
			%9  & 12 & 14 & 16 & 16 \\
			%7  & 7  & 9  & 11 & 14 \\
			%7  & 6  & 6  & 7  & 9  \\
			19 & 19 & 19 & 18 & 20 \\
			14 & 17 & 18 & 18 & 17 \\
			10 & 12 & 15  & 17 & 17 \\
			9  & 9  & 10  & 12  & 15  \\
			\hline %\hline
		\end{tabular}
	\end{center}
\end{table}%

%%%%%%%%%%%%%%%%%%%%%%%%%%%%%%%%%%%%%%%%%%%%%%%%%%%%%%%%%%%%%%%%%%%%%%%%%%%%%%%%%%%%%%%%%%%%%%%%%%%%

Similar observations are made for Table~\ref{tab:block-prec-elim-2d-ht}, which shows iteration counts for the block preconditioners on the bubble-eliminated system for different mesh sizes and time-step sizes.
We see that using the bubble-eliminated system results in a slight degradation in performance, but nothing significant.
It is also important to note that the performance impact of using the inexact block preconditioners is negligible versus using the exact block preconditioners.
This implies that the inexact preconditioners could potentially be solved with less strict tolerance, resulting in more computational efficiency.

\begin{table}[h!]
	\setlength{\tabcolsep}{3pt}
	\footnotesize
	\begin{center}
		\caption{Bubble-eliminated system. Iteration counts for the block preconditioners on the 2D Mandel problem with varying discretization parameters}\label{tab:block-prec-elim-2d-ht}
		\begin{tabular}{|l || c c c c c|}
			\hline %\hline
			&\multicolumn{5}{ c| }{$\mathcal{B}_D^E$}\\ \hline
			\backslashbox{$\tau$}{$h$} & $\frac{1}{8}$ & $ \frac{1}{16}$ & $\frac{1}{32}$  & $\frac{1}{64}$ & $\frac{1}{128}$  \\
			\hline
			%$0.1$    & 25 & 31 & 36 & 39 & 39 \\
			%$0.01$   & 27 & 26 & 25 & 30 & 34 \\
			%$0.001$  & 27 & 28 & 27 & 22 & 25 \\
			%$0.0001$ & 22 & 25 & 25 & 24 & 22 \\
			$0.1$    & 36 & 40 & 43 & 43 & 42 \\
			$0.01$   & 26 & 30 & 37 & 40 & 40 \\
			$0.001$  & 32 & 29 & 25 & 31 & 35 \\
			$0.0001$ & 34 & 35 & 31 & 25 & 26 \\
			% [1ex] adds vertical space
			\hline %\hline
		\end{tabular}
		\begin{tabular}{|c c c c c|}
			\hline
			\multicolumn{5}{ |c| }{$\mathcal{B}_L^E$}\\ \hline
			%$\frac{1}{4}$ \hspace{-58pt}\phantom{\backslashbox{$\tau$}{Mesh}}  & $\frac{1}{8}$ & $\frac{1}{16}$  & $\frac{1}{32}$ & $\frac{1}{64}$ \\
			$\frac{1}{8}$ \hspace{-50pt}\phantom{\backslashbox{$\tau$}{Mesh}}  & $\frac{1}{16}$ & $\frac{1}{32}$  & $\frac{1}{64}$ & $\frac{1}{128}$ \\
			\hline
			%18 & 20 & 21 & 20 & 19 \\
			%14 & 13 & 17 & 19 & 19 \\
			%13 & 13 & 12 & 13 & 16 \\
			%10 & 11 & 11 & 11 & 11 \\
			23 & 23 & 23 & 22 & 21 \\
			17 & 21 & 22 & 22 & 22 \\
			17 & 15 & 18 & 21 & 22 \\
			19 & 18 & 16 & 14 & 18 \\
			\hline %\hline
		\end{tabular}
		\begin{tabular}{|c c c c c|}
			\hline%\hline
			\multicolumn{5}{ |c| }{$\mathcal{B}_U^E$}\\ \hline
			%$\frac{1}{4}$ \hspace{-58pt}\phantom{\backslashbox{$\tau$}{Mesh}}  & $\frac{1}{8}$ & $\frac{1}{16}$  & $\frac{1}{32}$ & $\frac{1}{64}$ \\
			$\frac{1}{8}$ \hspace{-50pt}\phantom{\backslashbox{$\tau$}{Mesh}}  & $\frac{1}{16}$ & $\frac{1}{32}$  & $\frac{1}{64}$ & $\frac{1}{128}$ \\
			\hline
			%17 & 20 & 21 & 21 & 20 \\
			%13 & 14 & 17 & 18 & 19 \\
			%9  & 11 & 11 & 13 & 15 \\
			%9  & 9  & 9  & 10 & 11 \\
			22 & 23 & 23 & 22 & 21 \\
			16 & 20 & 22 & 22 & 21 \\
			14  & 14 & 16 & 20 & 21 \\
			14 &14  &14 & 13 & 17 \\
			\hline %\hline
		\end{tabular}
        %%%%%%%%%%
        \linebreak
        %%%%%%%%%%
		\begin{tabular}{|l ||c c c c c|}
			\hline%\hline
            &\multicolumn{5}{ c| }{$\widehat{\mathcal{B}_D}^E$}\\ \hline
			%$\frac{1}{4}$ \hspace{-58pt}\phantom{\backslashbox{$\tau$}{Mesh}}  & $\frac{1}{8}$ & $\frac{1}{16}$  & $\frac{1}{32}$ & $\frac{1}{64}$ \\
            \backslashbox{$\tau$}{$h$} & $\frac{1}{8}$ & $\frac{1}{16}$ & $\frac{1}{32}$  & $\frac{1}{64}$ & $\frac{1}{128}$ \\
			\hline
			%$0.1$    & 25 & 31 & 36 & 39 & 39 \\
			%$0.01$   & 27 & 26 & 25 & 30 & 34 \\
			%$0.001$  & 27 & 28 & 27 & 22 & 25 \\
			%$0.0001$ & 22 & 25 & 25 & 24 & 22 \\
			$0.1$    & 36 & 40 & 43 & 43 & 43 \\
			$0.01$   & 26 & 30 & 37 & 40 & 40 \\
			$0.001$  & 32 & 29 & 25 & 31 & 35 \\
			$0.0001$ & 34 & 35 & 31 & 25 & 26 \\
			\hline %\hline
		\end{tabular}
		\begin{tabular}{|c c c c c|}
			\hline%\hline
			\multicolumn{5}{ |c| }{$\widehat{\mathcal{B}_L}^E$}\\ \hline
			%$\frac{1}{4}$ \hspace{-58pt}\phantom{\backslashbox{$\tau$}{Mesh}}  & $\frac{1}{8}$ & $\frac{1}{16}$  & $\frac{1}{32}$ & $\frac{1}{64}$ \\
			$\frac{1}{8}$ \hspace{-50pt}\phantom{\backslashbox{$\tau$}{Mesh}}  & $\frac{1}{16}$ & $\frac{1}{32}$  & $\frac{1}{64}$ & $\frac{1}{128}$ \\
			\hline
			%18 & 20 & 21 & 20 & 19 \\
			%14 & 14 & 17 & 19 & 19 \\
			%13 & 14 & 12 & 13 & 16 \\
			%10 & 11 & 11 & 12 & 11 \\
			23 & 24 & 23 & 22 & 23 \\
			17 & 21 & 22 & 23 & 22 \\
			18 & 15 & 18 & 21 & 22 \\
			19 & 18 & 16 & 15 & 18 \\
			\hline %\hline
		\end{tabular}
		\begin{tabular}{|c c c c c|}
			\hline%\hline
			\multicolumn{5}{ |c| }{$\widehat{\mathcal{B}_U}^E$}\\ \hline
			%$\frac{1}{4}$ \hspace{-58pt}\phantom{\backslashbox{$\tau$}{Mesh}}  & $\frac{1}{8}$ & $\frac{1}{16}$  & $\frac{1}{32}$ & $\frac{1}{64}$ \\
			$\frac{1}{8}$ \hspace{-50pt}\phantom{\backslashbox{$\tau$}{Mesh}}  & $\frac{1}{16}$ & $\frac{1}{32}$  & $\frac{1}{64}$ & $\frac{1}{128}$ \\
			\hline
			%17 & 20 & 21 & 21 & 20 \\
			%13 & 14 & 17 & 18 & 19 \\
			%9  & 11 & 12 & 14 & 15 \\
			%9  & 9  & 9  & 10 & 11 \\
			22 & 23 & 23 & 22 & 21 \\
			16 & 20 & 22 & 22 & 21 \\
			15 & 14 & 17 & 20 & 21 \\
			14 & 14 & 14 & 14 & 17 \\
			\hline %\hline
		\end{tabular}
	\end{center}
\end{table}%

Table~\ref{tab:K-nu-full-2d} and Table~\ref{tab:K-nu-elim-2d} show iteration counts for the block preconditioners when the physical values of $\nu$ and $K$ are varied for the full bubble system and bubble-eliminated system. The mesh size is fixed to $h=\frac{1}{128}$, and the time-step size is $\tau=0.01$.
Again, we observe robustness, this time with respect to the physical parameters.
The use of inexact preconditioners and the bubble elimination have minimal impact on the performance.
In the limit of impermeability ($k \rightarrow 0$), or in the limit of the Poisson ratio approaching $0.5$,
the three-field Biot model limits to the Stokes' Equation.
An interesting result is the better performance when the system is approaching this case.
%The bubble enrichment is used to provide Stokes-stability to our discretized system,
%and thus
%without the bubble enrichment,
%it would not be Stokes-stable.
%The results confirm that our system preserves the stability properties.

%%%%%%%%%%%%%%%%%%%%

%%%%%%%%%%%%%%%%%%%%%%%%%%%%%%%%%%%%%%%%%%%%%%%%%%%%%%%%%%%%%%%%%%%%%%%%%%%%%%%%%%%%%%%%%%%%%%%%%%%%

\begin{table}[htp]
\footnotesize
\begin{center}
\caption{Full bubble system. Iteration counts for the block preconditioners on the 2D Mandel problem with varying physical parameters $K$ and $\nu$.} \label{tab:K-nu-full-2d}
\begin{tabular}{|c|c c c c c c|}
  \hline
  & \multicolumn{6}{ c| }{$\nu = 0.0$ and varying $K$} \\ \hline
  & $1$ & $10^{-2}$ & $10^{-4}$ & $10^{-6}$ & $10^{-8}$ & $10^{-10}$  \\ \hline
%  $\mathcal{B}_D$           & 23  & 27 & 38 & 35 & 17 & 10 \\
%  $\mathcal{B}_L$           & 7   & 9  & 15 & 16 & 9  & 5  \\
%  $\mathcal{B}_U$           & 13  & 15 & 17 & 16 & 8  & 3  \\
%  $\widehat{\mathcal{B}_D}$ & 35  & 29 & 38 & 35 & 17 & 10 \\
%  $\widehat{\mathcal{B}_L}$ & 14  & 15 & 16 & 16 & 9  & 6  \\
%  $\widehat{\mathcal{B}_U}$ & 27  & 19 & 17 & 16 & 9  & 2  \\
  $\mathcal{B}_D$           & 23  & 25 & 35 & 38 & 29 & 19 \\
  $\mathcal{B}_L$           & 7  & 11  & 15 & 17 & 15  & 9  \\
  $\mathcal{B}_U$           & 13  & 16 & 17 & 16 & 15  & 7  \\
  $\widehat{\mathcal{B}_D}$ & 35  & 33 & 36 & 38 & 29 & 19 \\
  $\widehat{\mathcal{B}_L}$ & 14  & 15 & 16 & 18 & 15  & 10  \\
  $\widehat{\mathcal{B}_U}$ & 27  & 22 & 17 & 17 & 15 & 8 \\
  \hline
\end{tabular}
\begin{tabular}{|c c c c c c|}
  \hline
  \multicolumn{6}{ |c| }{$K = 10^{-6}$ and varying $\nu$} \\ \hline
  $0.1$ & $0.2$ & $0.4$ & $0.45$ & $0.49$ & $0.499$  \\ \hline
%  45  & 50 & 39 & 37 & 32 & 21 \\[.0em]
%  16  & 18 & 11 & 9  & 7  & 6  \\[.01em]
%  20  & 22 & 15 & 13 & 11 & 12 \\[.05em]
%  45  & 50 & 39 & 37 & 32 & 25 \\[.2em]
%  17  & 19 & 12 & 10 & 11 & 9  \\[.2em]
%  21  & 23 & 19 & 20 & 24 & 16 \\
  45  & 52 & 39 & 36 & 28 & 20 \\[.0em]
  16  & 19 & 11 & 11 & 9 & 10  \\[.01em]
  20  & 22 & 16 & 14 & 11 & 16 \\[.05em]
  45  & 52 & 39 & 26 & 23 & 17 \\[.2em]
  17  & 20 & 14 & 12 & 11 & 12  \\[.2em]
  21  & 24 & 17 & 16 & 10 & 16 \\
  \hline
\end{tabular}
\end{center}
%\label{default}
\end{table}%

%%%%%%%%%%%%%%%%%%%%%%%%%%%%%%%%%%%%%%%%%%%%%%%%%%%%%%%%%%%%%%%%%%%%%%%%%%%%%%%%%%%%%%%%%%%%%%%%%%%%

\begin{table}[htp]
\footnotesize
\begin{center}
\caption{Bubble-eliminated system. Iteration counts for the block preconditioners on the 2D Mandel problem with varying physical parameters $K$ and $\nu$.} \label{tab:K-nu-elim-2d}
\begin{tabular}{|c|c c c c c c|}
  \hline
  & \multicolumn{6}{ c| }{$\nu = 0.0$ and varying $K$} \\ \hline
  & $1$ & $10^{-2}$ & $10^{-4}$ & $10^{-6}$ & $10^{-8}$ & $10^{-10}$  \\ \hline
%  $\mathcal{B}_D^E$           & 36  & 38 & 42 & 34 & 23 & 19 \\
%  $\mathcal{B}_L^E$           & 14  & 15 & 19 & 19 & 11 & 7  \\
%  $\mathcal{B}_U^E$           & 23  & 22 & 21 & 19 & 11 & 3  \\
%  $\widehat{\mathcal{B}_D}^E$ & 49  & 38 & 42 & 34 & 23 & 14 \\
%  $\widehat{\mathcal{B}_L}^E$ & 17  & 18 & 19 & 19 & 11 & 8  \\
%  $\widehat{\mathcal{B}_U}^E$ & 34  & 24 & 21 & 19 & 11 & 2  \\
  $\mathcal{B}_D^E$               & 36  & 36 & 41 & 42 & 26 & 34 \\
  $\mathcal{B}_L^E$               & 17  & 17 & 19 & 21 & 18 & 16  \\
  $\mathcal{B}_U^E$                & 23  & 22 & 22 & 21 & 17 & 12  \\
  $\widehat{\mathcal{B}_D}^E$ & 36  & 38 & 41 & 43 & 26 & 34 \\
  $\widehat{\mathcal{B}_L}^E$ & 20  & 20 & 20 & 23 & 18 & 17  \\
  $\widehat{\mathcal{B}_U}^E$ & 27  & 27 & 22 & 21 & 17 & 13  \\
  \hline
\end{tabular}
\begin{tabular}{|c c c c c c|}
  \hline
  \multicolumn{6}{ |c| }{$K = 10^{-6}$ and varying $\nu$} \\ \hline
  $0.1$ & $0.2$ & $0.4$ & $0.45$ & $0.49$ & $0.499$  \\ \hline
  %43  & 50 & 43 & 43 & 43 & 30 \\[.12em]
  %20  & 23 & 17 & 15 & 13 & 8  \\[.12em]
  %24  & 28 & 23 & 23 & 21 & 16 \\[.28em]
  %43  & 50 & 43 & 43 & 43 & 24 \\[.25em]
  %20  & 23 & 18 & 17 & 14 & 12 \\[.25em]
  %25  & 29 & 24 & 24 & 26 & 14 \\
  43  & 54 & 44 & 43 & 39 & 22 \\[.12em]
  20  & 24 & 21 & 20 & 17 & 12  \\[.12em]
  24  & 28 & 23 & 23 & 20 & 17 \\[.28em]
  43  & 54 & 44 & 43 & 39 & 20 \\[.25em]
  20  & 26 & 22 & 21 & 18 & 13 \\[.25em]
  25  & 28 & 23 & 23 & 20 & 17 \\
  \hline
\end{tabular}
\end{center}
%\label{default}
\end{table}%

%%%%%%%%%%%%%%%%%%%%%%%%%%%%%%%%%%%%%%%%%%%%%%%%%%%%%%%%%%%%%%%%%%%%%%%%%%%%%%%%%%%%%%%%%%%%%%%%%%%%
Finally, Figure~\ref{fig:time2D} shows the time scaling with respect to mesh size
for the three different inexact preconditioners for the full-bubble and bubble-eliminated systems.
The timings scale on the order of $O(N\log N)$ which is nearly optimal.
We also see that while a single iteration of the block lower or block upper triangular preconditioner will take longer
than that of a block diagonal iteration, the fewer required iterations of the block triangular preconditioners results in
a net savings in total computational time.
The bubble-eliminated system, being a smaller system than the full-bubble system, takes less time to solve. Figure~\ref{fig:time2D} shows that solving the bubble-eliminated system is nearly ten times faster than solving the full-bubble system.

\begin{figure}[ht!]
    \centering
    \includegraphics[scale=0.75]{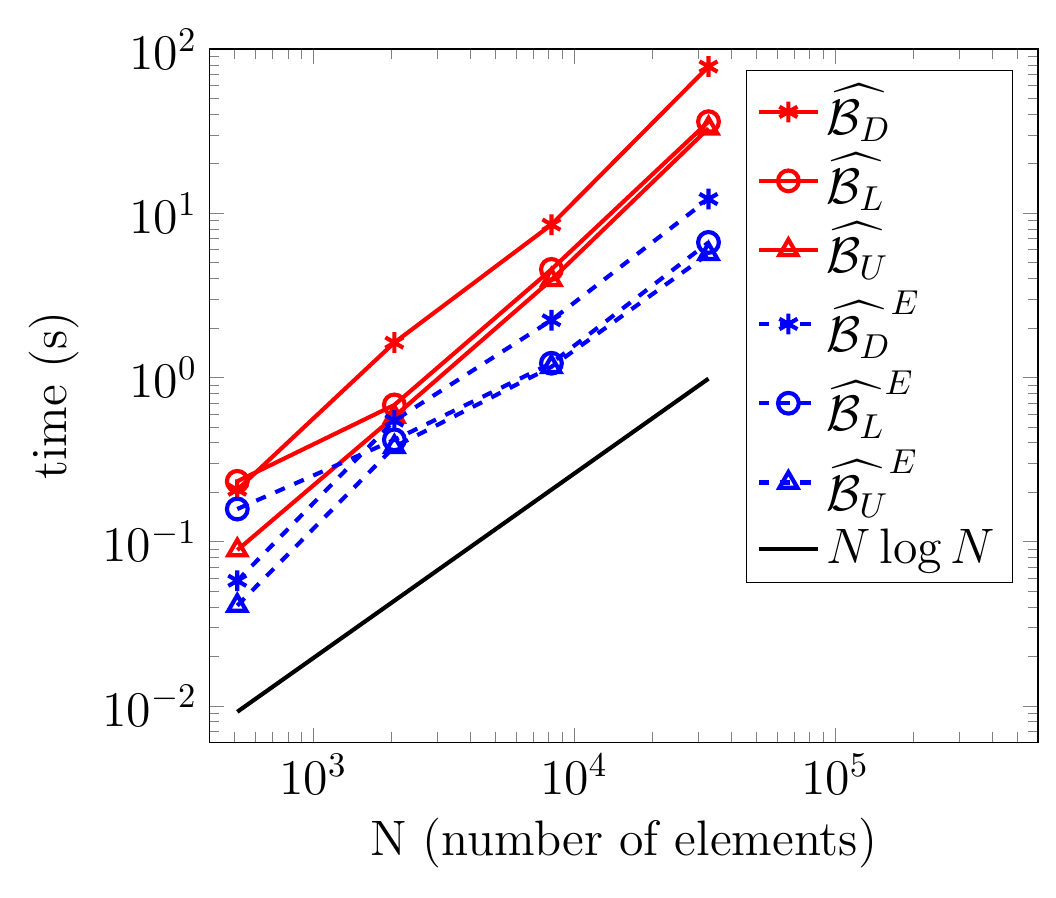}
    \caption{Timing results versus mesh size for the full bubble and bubble-eliminated system for the 2D Mandel problem, where $N$ is the total number of elements. The performance comparison between the inexact block diagonal, block upper triangular and block lower triangular
        preconditioners is shown.}
    \label{fig:time2D}
\end{figure}
%%%%%%%%%%%%%%%%%%%%%%%%%%%%%%%%%%%%%%%%%%%%%%%%%%%%%%%%%%%%%%%%%%%%%%%%%%%%%%%%%%%%%%%%%%%%%%%%%%%%

\subsection{Three-Dimensional Test Problem}
Next, we consider a footing problem in three-dimensions as seen in \cite{NLA:NLA587}. The domain is a unit cube modeling a block of porous soil. A uniform load, $\bm{\sigma_0}$, of intensity $3\times 10^{4}$ per unit area is applied in a square of size $0.5\times 0.5$ in the middle of the top face. The base of the domain is assumed to be fixed while the rest of the domain is free to drain.
\edit{The material properties used are $\mu_f=1$, $\alpha=1$, and $M=10^6$, the Lam\'e coefficients are computed in terms of the Young modulus and the Poisson ratio as in the 2D problem.}
%For the material properties, the Lam\'e coefficients are computed in terms of the Young modulus, $E=3\times 10^4$, and the Poisson ratio, $\nu$: $\lambda = \frac{E \nu}{(1-2\nu)(1+\nu)}$ and $\mu = \frac{E}{1+2\nu}$.

\begin{figure}[h!]
    \centering
    \includegraphics[scale=1.0]{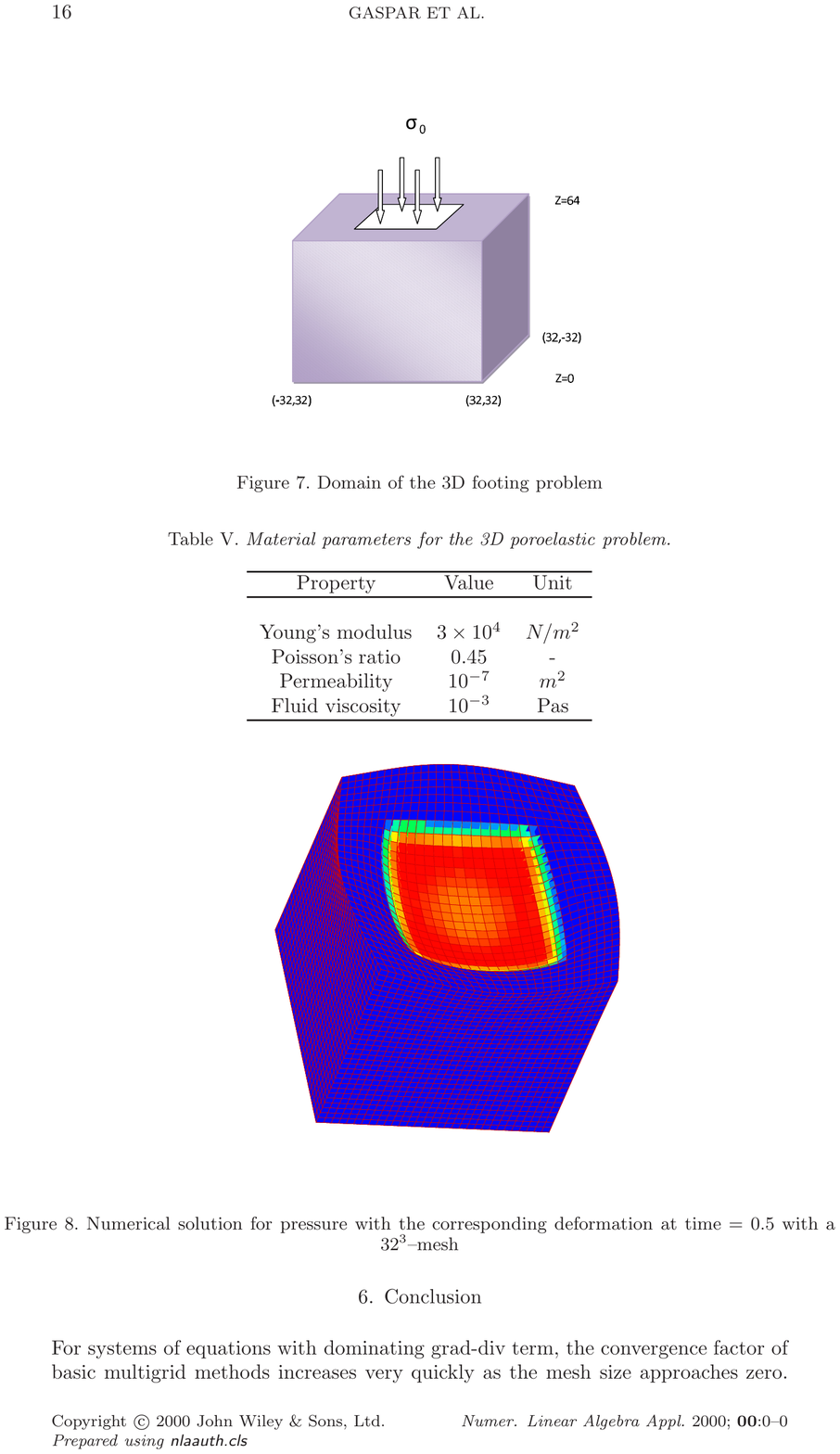}
    \includegraphics[scale=0.15]{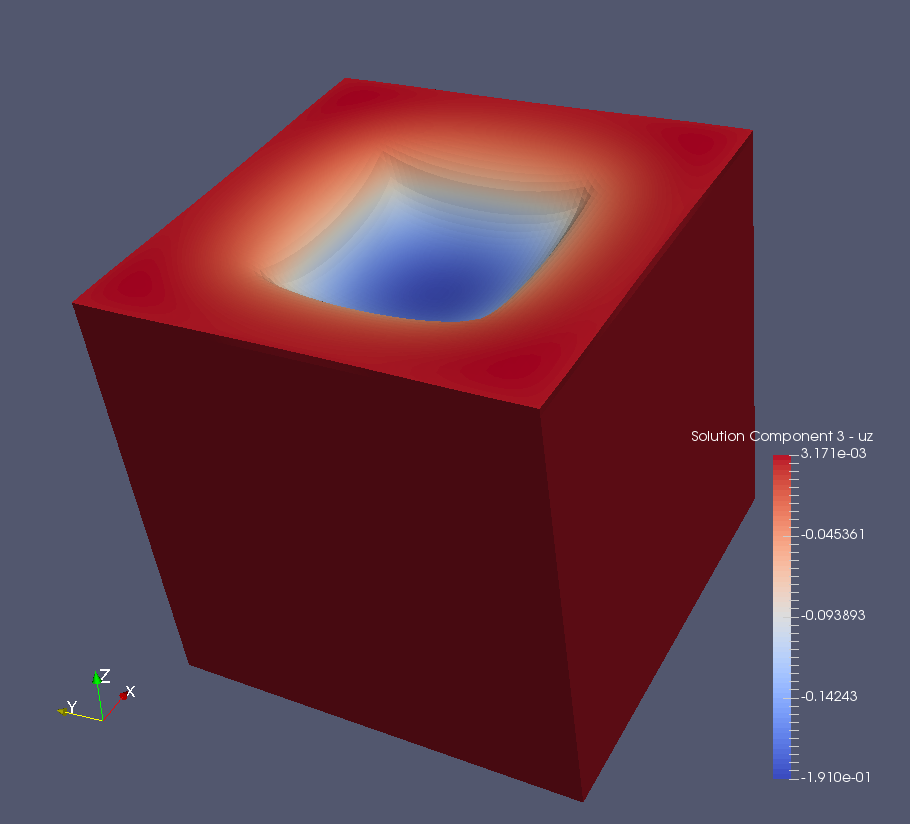}
    \caption{The three-dimensional footing problem. The image on the left shows the computational domain, while the figure on the right shows an example solution \cite{Adler2017}.}
    \label{fig:footing}
\end{figure}

Table~\ref{tab:block-prec-full-3d-ht} and Table~\ref{tab:block-prec-elim-3d-ht} show iteration counts for the block preconditioners on both systems, while varying the discretization parameters, mesh size and time-step size.  Again, one step of Backward Euler is used to test the preconditioners.
The physical parameters for these tables were $\nu=0.2$ and $k=10^{-6}$.
Here the benefits of the inexact preconditioners becomes clear, as the exact preconditioners could not be used on the two largest meshes due to memory limitations. The iteration counts confirm that the preconditioned system is robust with respect to the discretization parameters even in three dimensions.

\begin{table}[h!]
	\footnotesize
	\begin{center}
		\caption{Full bubble system. Iteration counts for the block preconditioners on the 3D footing problem with varying discretization parameters. ($*$ means the direct method for solving diagonal blocks is out of memory).}\label{tab:block-prec-full-3d-ht}
		\begin{tabular}{|l || c c c c|}
			\hline %\hline
			&\multicolumn{4}{ c| }{$\mathcal{B}_D$}\\ \hline
			\backslashbox{$\tau$}{$h$} & $\frac{1}{4}$ & $ \frac{1}{8}$ & $\frac{1}{16}$  & $\frac{1}{32}$  \\
			\hline
			$0.1$    & 60 & 65 & 65 & $*$ \\
			$0.01$   & 47 & 57 & 68 & $*$ \\
			$0.001$  & 40 & 42 & 49 & $*$ \\
			$0.0001$ & 40 & 42 & 42 & $*$ \\
			% [1ex] adds vertical space
			\hline %\hline
		\end{tabular}
		\begin{tabular}{|c c c c|}
			\hline
			\multicolumn{4}{ |c| }{$\mathcal{B}_L$}\\ \hline
			$\frac{1}{4}$ \hspace{-58pt}\phantom{\backslashbox{$\tau$}{Mesh}}  & $\frac{1}{8}$ & $\frac{1}{16}$  & $\frac{1}{32}$ \\
			\hline
			34 & 36 & 36 & $*$ \\
			30 & 34 & 37 & $*$ \\
			26 & 28 & 32 & $*$ \\
			24 & 35 & 36 & $*$ \\
			\hline %\hline
		\end{tabular}
		\begin{tabular}{|c c c c|}
			\hline%\hline
			\multicolumn{4}{ |c| }{$\mathcal{B}_U$}\\ \hline
			$\frac{1}{4}$ \hspace{-58pt}\phantom{\backslashbox{$\tau$}{Mesh}}  & $\frac{1}{8}$ & $\frac{1}{16}$  & $\frac{1}{32}$ \\
			\hline
			32 & 34 & 34 & $*$ \\
			26 & 31 & 35 & $*$ \\
			20 & 23 & 28 & $*$ \\
			20 & 20 & 21 & $*$ \\
			\hline %\hline
		\end{tabular}
        %%%%%%%%%%%%
        \linebreak
        %%%%%%%%%%%%
		\begin{tabular}{|l ||c c c c|}
			\hline%\hline
            &\multicolumn{4}{ c| }{$\widehat{\mathcal{B}_D}$}\\ \hline
            \backslashbox{$\tau$}{$h$} & $\frac{1}{4}$ & $\frac{1}{8}$ & $\frac{1}{16}$  & $\frac{1}{32}$ \\
			\hline
			$0.1$    & 60 & 65 & 66 & 64 \\
			$0.01$   & 47 & 58 & 68 & 71 \\
			$0.001$  & 42 & 42 & 51 & 63 \\
			$0.0001$ & 40 & 42 & 42 & 45 \\
			\hline %\hline
		\end{tabular}
		\begin{tabular}{|c c c c|}
			\hline%\hline
			\multicolumn{4}{ |c| }{$\widehat{\mathcal{B}_L}$}\\ \hline
			$\frac{1}{4}$ \hspace{-58pt}\phantom{\backslashbox{$\tau$}{Mesh}}  & $\frac{1}{8}$ & $\frac{1}{16}$  & $\frac{1}{32}$ \\
			\hline
			34 & 36 & 36 & 36 \\
			30 & 34 & 37 & 39 \\
			26 & 28 & 32 & 36 \\
			24 & 25 & 27 & 29 \\
			\hline %\hline
		\end{tabular}
		\begin{tabular}{|c c c c|}
			\hline%\hline
			\multicolumn{4}{ |c| }{$\widehat{\mathcal{B}_U}$}\\ \hline
			$\frac{1}{4}$ \hspace{-58pt}\phantom{\backslashbox{$\tau$}{Mesh}}  & $\frac{1}{8}$ & $\frac{1}{16}$  & $\frac{1}{32}$ \\
			\hline
			32 & 34 & 34 & 34 \\
			26 & 31 & 35 & 37 \\
			20 & 24 & 28 & 33 \\
			21 & 22 & 23 & 25 \\
			\hline %\hline
		\end{tabular}
	\end{center}
\end{table}%

%%%%%%%%%%%%%%%%%%%%%%%%%%%%%%%%%%%%%%%%%%%%%%%%%%%%%%%%%%%%%%%%%%%%%%%%%%%%%%%%%%%%%%%%%%%%%%%%%%%%

\begin{table}[h!]
	\footnotesize
	\begin{center}
		\caption{Bubble-eliminated system.  Iteration counts for the block preconditioners on the 3D footing problem with varying discretization parameters. ($*$ means the direct method for solving diagonal blocks is out of memory).}\label{tab:block-prec-elim-3d-ht}
		\begin{tabular}{|l || c c c c|}
			\hline %\hline
			&\multicolumn{4}{ c| }{$\mathcal{B}_D^E$}\\ \hline
			\backslashbox{$\tau$}{$h$} & $\frac{1}{4}$ & $ \frac{1}{8}$ & $\frac{1}{16}$  & $\frac{1}{32}$ \\
			\hline
			$0.1$    & 61 & 65 & 66 & $*$ \\
			% Entering 2nd row
			$0.01$   & 54 & 58 & 66 & $*$ \\
			% Entering 3rd row
			$0.001$  & 58 & 58 & 53 & $*$ \\
			% Entering 4th row
			$0.0001$ & 59 & 61 & 60 & $*$ \\
			% [1ex] adds vertical space
			\hline %\hline
		\end{tabular}
		\begin{tabular}{|c c c c|}
			\hline
			\multicolumn{4}{ |c| }{$\mathcal{B}_L^E$}\\ \hline
			$\frac{1}{4}$ \hspace{-58pt}\phantom{\backslashbox{$\tau$}{Mesh}}  & $\frac{1}{8}$ & $\frac{1}{16}$  & $\frac{1}{32}$ \\
			\hline
			41 & 41 & 39 & $*$ \\
			39 & 42 & 43 & $*$ \\
			37 & 39 & 40 & $*$ \\
			35 & 38 & 38 & $*$ \\
			\hline %\hline
		\end{tabular}
		\begin{tabular}{|c c c c|}
			\hline%\hline
			\multicolumn{4}{ |c| }{$\mathcal{B}_U^E$}\\ \hline
			$\frac{1}{4}$ \hspace{-58pt}\phantom{\backslashbox{$\tau$}{Mesh}}  & $\frac{1}{8}$ & $\frac{1}{16}$  & $\frac{1}{32}$ \\
			\hline
			39 & 39 & 38 & $*$ \\
			33 & 39 & 41 & $*$ \\
			28 & 32 & 35 & $*$ \\
			29 & 29 & 30 & $*$ \\
			\hline %\hline
		\end{tabular}
		\begin{tabular}{|l ||c c c c|}
			\hline%\hline
            &\multicolumn{4}{ c| }{$\widehat{\mathcal{B}_D}^E$}\\ \hline
            \backslashbox{$\tau$}{$h$} & $\frac{1}{4}$  & $\frac{1}{8}$ & $\frac{1}{16}$  & $\frac{1}{32}$ \\
			\hline
			$0.1$    & 61 & 65 & 66 & 66 \\
			$0.01$   & 54 & 58 & 66 & 70 \\
			$0.001$  & 58 & 58 & 53 & 61 \\
			$0.0001$ & 58 & 61 & 60 & 55 \\
			\hline %\hline
		\end{tabular}
		\begin{tabular}{|c c c c|}
			\hline%\hline
			\multicolumn{4}{ |c| }{$\widehat{\mathcal{B}_L}^E$}\\ \hline
			$\frac{1}{4}$ \hspace{-58pt}\phantom{\backslashbox{$\tau$}{Mesh}}  & $\frac{1}{8}$ & $\frac{1}{16}$  & $\frac{1}{32}$ \\
			\hline
			41 & 41 & 39 & 39 \\
			39 & 42 & 43 & 43 \\
			37 & 39 & 40 & 43 \\
			35 & 38 & 38 & 38 \\
			\hline %\hline
		\end{tabular}
		\begin{tabular}{|c c c c|}
			\hline%\hline
			\multicolumn{4}{ |c| }{$\widehat{\mathcal{B}_U}^E$}\\ \hline
			$\frac{1}{4}$ \hspace{-58pt}\phantom{\backslashbox{$\tau$}{Mesh}}  & $\frac{1}{8}$ & $\frac{1}{16}$  & $\frac{1}{32}$ \\
			\hline
			40 & 40 & 38 & 37 \\
			33 & 39 & 41 & 42 \\
			28 & 32 & 35 & 40 \\
			29 & 30 & 30 & 32 \\
			\hline %\hline
		\end{tabular}
	\end{center}
\end{table}%

%%%%%%%%%%%%%%%%%%%%%%%%%%%%%%%%%%%%%%%%%%%%%%%%%%%%%%%%%%%%%%%%%%%%%%%%%%%%%%%%%%%%%%%%%%%%%%%%%%%%

Table~\ref{tab:K-nu-full-3d} and Table~\ref{tab:K-nu-elim-3d} show the results when the physical parameters are varied.
The mesh size is fixed to $h=\frac{1}{16}$, and the time step size is $\tau=0.01$.
Again, we see that the preconditioned system is robust with respect to the physical parameters, and that the use of the
inexact preconditioners has little impact on the required iterations.
The bubble-eliminated system shows performance that is overall similar to the full bubble system.

\begin{table}[htp]
	\footnotesize
	\begin{center}
		\caption{Full bubble system.  Iteration counts for the block preconditioners on the 3D footing problem with varying physical parameters, $K$ and $\nu$.} \label{tab:K-nu-full-3d}
		\begin{tabular}{|c|c c c c c c|}
			\hline
			& \multicolumn{6}{ c| }{$\nu = 0.2$ and varying $K$} \\ \hline
			& $1$ & $10^{-2}$ & $10^{-4}$ & $10^{-6}$ & $10^{-8}$ & $10^{-10}$  \\ \hline
			$\mathcal{B}_D$ & 28 & 28 & 49 & 68 & 42 & 35 \\
			$\mathcal{B}_L$ & 20 & 20 & 27 & 37 & 26 & 24 \\
			$\mathcal{B}_U$ & 18 & 18 & 26 & 35 & 21 & 14 \\
			$\widehat{\mathcal{B}_D}$ & 28 & 28 & 49 & 68 & 42 & 42 \\
			$\widehat{\mathcal{B}_L}$ & 20 & 20 & 28 & 37 & 27 & 25 \\
			$\widehat{\mathcal{B}_U}$ & 21 & 21 & 27 & 35 & 22 & 24 \\
			\hline
		\end{tabular}
		\begin{tabular}{|c c c c c c|}
			\hline
			\multicolumn{6}{ |c| }{$K = 10^{-6}$ and varying $\nu$} \\ \hline
			$0.1$ & $0.2$ & $0.4$ & $0.45$ & $0.49$ & $0.499$  \\ \hline
			72 & 68 & 51 & 46 & 35 & 26 \\[.09em]
			41 & 37 & 25 & 21 & 17 & 20 \\[.09em]
			38 & 35 & 25 & 21 & 17 & 20 \\[.09em]
			72 & 68 & 51 & 46 & 35 & 26 \\[.10em]
			41 & 37 & 25 & 21 & 17 & 20 \\[.10em]
			38 & 35 & 25 & 21 & 17 & 21 \\[.10em]
			\hline
		\end{tabular}
	\end{center}
	%\label{default}
\end{table}%

%%%%%%%%%%%%%%%%%%%%%%%%%%%%%%%%%%%%%%%%%%%%%%%%%%%%%%%%%%%%%%%%%%%%%%%%%%%%%%%%%%%%%%%%%%%%%%%%%%%%

\begin{table}[ht!]
	\footnotesize
	\begin{center}
		\caption{Bubble-eliminated system.  Iteration counts for the block preconditioners on the 3D footing problem with varying physical parameters, $K$ and $\nu$.} \label{tab:K-nu-elim-3d}
		\begin{tabular}{|c|c c c c c c|}
			\hline
			& \multicolumn{6}{ c| }{$\nu = 0.2$ and varying $K$} \\ \hline
			& $1$ & $10^{-2}$ & $10^{-4}$ & $10^{-6}$ & $10^{-8}$ & $10^{-10}$  \\ \hline
			$\mathcal{B}_D^E$           & 33 & 33 & 51 & 66 & 60 & 61 \\
			$\mathcal{B}_L^E$           & 20 & 20 & 29 & 43 & 38 & 35 \\
			$\mathcal{B}_U^E$           & 20 & 20 & 29 & 41 & 28 & 18 \\
			$\widehat{\mathcal{B}_D}^E$ & 33 & 33 & 51 & 66 & 60 & 61 \\
			$\widehat{\mathcal{B}_L}^E$ & 22 & 22 & 30 & 43 & 38 & 36 \\
			$\widehat{\mathcal{B}_U}^E$ & 22 & 22 & 29 & 41 & 29 & 29 \\
			\hline
		\end{tabular}
		\begin{tabular}{|c c c c c c|}
			\hline
			\multicolumn{6}{ |c| }{$K = 10^{-6}$ and varying $\nu$} \\ \hline
			$0.1$ & $0.2$ & $0.4$ & $0.45$ & $0.49$ & $0.499$  \\ \hline
			70 & 66 & 53 & 48 & 43 & 28 \\[.12em]
			46 & 43 & 32 & 28 & 24 & 21 \\[.12em]
			44 & 41 & 31 & 27 & 24 & 21 \\[.12em]
			70 & 66 & 53 & 48 & 43 & 28 \\[.25em]
			46 & 43 & 32 & 28 & 24 & 22 \\[.25em]
			44 & 41 & 31 & 28 & 24 & 22 \\[.25em]
			\hline
		\end{tabular}
	\end{center}
	%\label{default}
\end{table}%

Similarly to Figure~\ref{fig:time2D}, Figure~\ref{fig:time3D} shows time scaling with respect to mesh size for the three different inexact preconditioners for the full-bubble and bubble-eliminated systems, again showing a nearly optimal scaling of $O(N \log N)$.
The time comparison between the three different inexact preconditioners again demonstrate that the block lower and block upper triangular preconditioners are faster than the block diagonal preconditioner despite being more expensive per iteration. Finally, we see that solving the bubble-eliminated system is faster than  solving the full-bubble system as expected.

%%%%%%%%%%%%%%%%%%%%%%%%
\begin{figure}[ht!]
	\centering
	\includegraphics[scale=0.75]{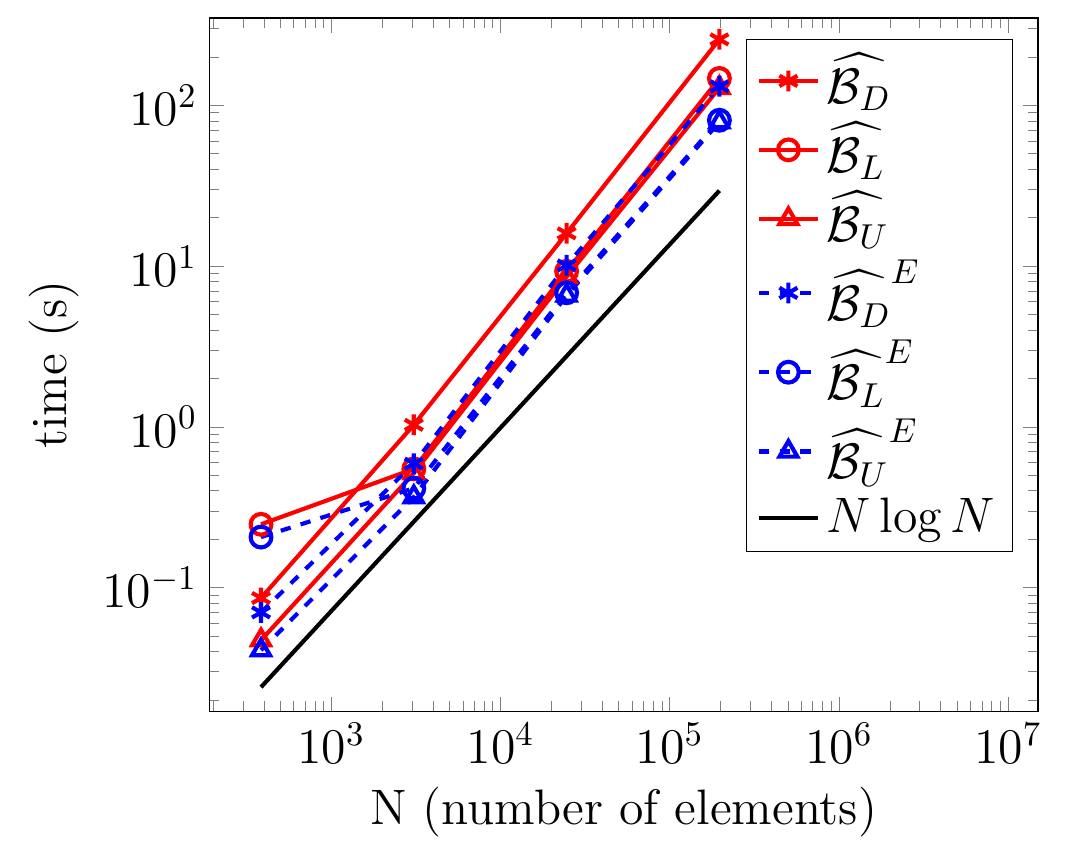}
	\caption{Timing results versus mesh size for the full bubble and bubble-eliminated systems for the 3D footing problem, where $N$ is the total number of elements.
		The performance comparison between the inexact block diagonal, block upper triangular and block lower triangular
		preconditioners is shown.}
	\label{fig:time3D}
\end{figure}

%%%%%%%%%%%%%%%%%%%%
\begin{table}[ht!]
\footnotesize
\begin{center}
\caption{Full bubble system. Iteration counts for the block preconditioners on the 3D footing problem with a varying jump in the physical parameter $K$.} \label{tab:kJumpFull}
\begin{tabular}{|c|c c c c c c|}
  \hline
  & \multicolumn{6}{ c| }{$\nu = 0.2$ and $k(x)=10^{-10}$ for $x<0.5$} \\[0.5ex]
  \hline
$k(x)$ for $x\geq 0.5$& $10^{-10}$ & $10^{-8}$ & $10^{-6}$ & $10^{-4}$ & $10^{-2}$ & $1$  \\ \hline
$\mathcal{B}_D$                & 35& 42& 84& 98& 80 & 80\\[0.5ex]
$\mathcal{B}_L$                 & 24& 27& 46& 56& 51 & 51\\[0.5ex]
$\mathcal{B}_U$                & 14& 20& 38& 44& 39 & 39\\[0.5ex]
$\widehat{\mathcal{B}_D}$ & 42& 44& 84& 98& 80& 80\\[0.5ex]
$\widehat{\mathcal{B}_L}$ & 25& 28& 46& 56& 52& 51\\[0.5ex]
$\widehat{\mathcal{B}_U}$ & 24& 22& 39& 45& 44& 44\\
\hline
\end{tabular}
\end{center}
\end{table}
%%%%%%%%%%%%%%%%%%%%
\begin{table}[ht!]
\footnotesize
\begin{center}
\caption{Bubble-eliminated system. Iteration counts for the block preconditioners on the 3D footing problem with a varying jump in the physical parameter $K$.} \label{tab:kJump}
\begin{tabular}{|c|c c c c c c|}
  \hline
  & \multicolumn{6}{ c| }{$\nu = 0.2$ and $k(x)=10^{-10}$ for $x<0.5$} \\[0.5ex]
  \hline
  %Size of jump& $10^{-10}$ & $10^{-8}$ & $10^{-6}$ & $10^{-4}$ & $10^{-2}$ & $1$  \\ \hline\\[-2ex]
%  $k(x)$ for $x\geq 0.5$& $1$ & $10^{-2}$ & $10^{-4}$ & $10^{-6}$ & $10^{-8}$ & $10^{-10}$  \\ \hline%[-2ex]
%  $\mathcal{B}_D^E$               & 132 & 131 &147 & 115 & 62& 61 \\
%  $\mathcal{B}_L^E$                 & 78   & 77  & 84  & 74  & 39 & 35 \\
%  $\mathcal{B}_U^E$                 & 57 & 56   & 61   & 54   & 27 & 18 \\
%  $\widehat{\mathcal{B}_D}^E$ & 133 & 131 &147 & 115 &62 & 61 \\
%  $\widehat{\mathcal{B}_L}^E$ & 79 &  79 & 84 &74 & 39& 36 \\
%  $\widehat{\mathcal{B}_U}^E$ & 60 & 61 & 63 &55 &29 & 29 \\
%  \hline
$k(x)$ for $x\geq 0.5$& $10^{-10}$ & $10^{-8}$ & $10^{-6}$ & $10^{-4}$ & $10^{-2}$ & $1$  \\ \hline
$\mathcal{B}_D^E$ &61&62&115&147&131&132\\[0.5ex]
$\mathcal{B}_L^E$ &35&39&74&84&77&78\\[0.5ex]
$\mathcal{B}_U^E$ &18&27&54&61&56&57\\[0.5ex]
$\widehat{\mathcal{B}_D}^E$ &61&62&115&147&131&133\\[0.5ex]
$\widehat{\mathcal{B}_L}^E$ &36&39&74&84&79&79\\[0.5ex]
$\widehat{\mathcal{B}_U}^E$ &29&29&55&63&61&60\\
\hline
\end{tabular}
\end{center}
\end{table}

\edit{In order to show the full capabilities of the preconditioners, we test on the 3D footing problem when there is a spatially-dependent
jump in the value for the permeability tensor $\bm{K}$.
The permeability tensor, $\bm{K}=k(x)\bm{I}$, is defined so that $k(x)=10^{-10}$ when $x<0.5$ and varied
for $x\geq0.5$.  The results are shown in Table~\ref{tab:kJumpFull} and Table~\ref{tab:kJump}.
The Poisson ratio is $\nu=0.2$, the mesh size is fixed to $h=\frac{1}{16}$, and the time-step size is $\tau=0.01$.
Note that the size of the jump increases from left to right in the table.
We see that, at the beginning, the iteration counts for the preconditioned system increases when the jump gets larger.  However, it stabilizes as the jump gets larger and, more importantly, the iterations are bounded from above.  This is consistent with our theoretical results that there is an upper bound on the condition number or field-of-values for the preconditioned system. 
%We see that the iteration counts for the preconditioned system is bounded even with a jump in the physical parameters.
}
%%%%%%%%%%%%%%%%%%%%%%%%%%%%%%%%%%%%%%%%%%%%%%%%%%%%%%%%%%%%%%%%%%%%%%%%%%%%
%%%%%%%%%%%%%%%%%%%%%%%%%%%%%%%%%%%%%%%%%%%%%%%%%%%%%%%%%%%%%%%%%%%%%%%%%%%%%%%%%%%%%%%%%%%%%%%%%%%%

\section{Conclusions}\label{sec:conc}

The stability and well-posedness of the discrete problem provides a
foundation for designing robust preconditioners. Thus, we are able to
develop block preconditioners which yield uniform convergence rates
for GMRES. These preconditioners are robust with respect to both the
physical and the discretization parameters, making it attractive for
problems in poromechanics, such as Biot's consolidation model
considered here. Moreover, the bubble-eliminated system has the same number of degrees of freedom as a P1-RT0-P0 discretized system,
yet it is well-posed independent of the physical and discretization parameters, and it attains performance similar to
the full bubble enriched P1-RT0-P0 system.  Due to the lower number of degrees of freedom, though, the solution time is faster than the fully-stabilized system.

Future work involves developing monolithic multigrid methods for the stabilized discretization of the three-field Biot model presented in \cite{Rodrigo2017}.
The block preconditioners presented here can then be used as a relaxation step in the monolithic multigrid method, and the overall performance will be compared against this work as stand alone preconditioners.  Additionally, other test problems including systems with fractures or other nonlinear behavior will be considered.

\appendix

%\section{Proofs from Section \ref{sec:wellposed}}

\section{Proof of Theorem \ref{thm:diag_wellposed}}\label{sec:appA}
The following lemmas are useful for the following proofs.
\begin{lemma} \label{lemma:div_leq_A}
	Given the system defined in (\ref{block_form}),
	\begin{equation*}
		%\| B_b \bm{x}_b \|_{M_p^{-1}} \leq \frac{1}{\zeta} \| \bm{x}_b \|_{D_{bb}}
		\| B \bm{v} \|_{M_p^{-1}} \leq \frac{1}{\zeta} \| \bm{v} \|_{A_{\bm u}},
	\end{equation*}
	where $(-\ddiv\bm{v},q)\rightarrow B$, with $q\in Q_h$ and $\bm{v}\in \bm{V}_h$.
\end{lemma}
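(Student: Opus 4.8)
\emph{Proof proposal.} The plan is to pass from the matrix quantity $\|B\bm v\|_{M_p^{-1}}$ back to the underlying bilinear forms and then reduce the claim to an elementary pointwise inequality between the divergence and the symmetric gradient; no stability or inf-sup property of the pair $(\bm V_h,Q_h)$ is needed, so the statement holds for any conforming choice.

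First I would rewrite the left-hand side by duality. For the coefficient vector $\bm v$ of a function $\bm v_h\in\bm V_h$, the entries of $B\bm v$ are $(-\ddiv\bm v_h,q_i)$ over a basis $\{q_i\}$ of $Q_h$, and $\|q\|_{M_p}=\|q_h\|_{L^2(\Omega)}$ for the corresponding $q_h\in Q_h$, since $M_p$ is the $L^2$ mass matrix on $Q_h$. Hence, with $q_h=\sum_i q^{(i)}q_i$,
\[
\|B\bm v\|_{M_p^{-1}} = \sup_{0\neq q_h\in Q_h}\frac{-(\ddiv\bm v_h,q_h)}{\|q_h\|_{L^2(\Omega)}} \;\le\; \sup_{0\neq q_h\in Q_h}\frac{\|\ddiv\bm v_h\|_{L^2(\Omega)}\,\|q_h\|_{L^2(\Omega)}}{\|q_h\|_{L^2(\Omega)}} \;=\; \|\ddiv\bm v_h\|_{L^2(\Omega)},
\]
by the Cauchy--Schwarz inequality (for $Q_h$ piecewise constant one may replace $\ddiv\bm v_h$ by its $L^2$-projection and obtain equality, but the bound above is all that is required).

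Next I would establish the pointwise inequality $(\ddiv\bm v_h)^2=(\operatorname{tr}\varepsilon(\bm v_h))^2\le d\,|\varepsilon(\bm v_h)|^2$ a.e. in $\Omega$, which follows from Cauchy--Schwarz applied to $\operatorname{tr}A=A:I$ together with $|I|=\sqrt d$ in the Frobenius inner product; integrating gives $\|\ddiv\bm v_h\|_{L^2(\Omega)}^2\le d\,\|\varepsilon(\bm v_h)\|_{L^2(\Omega)}^2$. Recalling $\|\bm v\|_{A_{\bm u}}^2=a(\bm v_h,\bm v_h)=2\mu\|\varepsilon(\bm v_h)\|_{L^2(\Omega)}^2+\lambda\|\ddiv\bm v_h\|_{L^2(\Omega)}^2$ and $\zeta^2=\lambda+2\mu/d$, I then estimate
\[
\zeta^2\|\ddiv\bm v_h\|_{L^2(\Omega)}^2 = \lambda\|\ddiv\bm v_h\|_{L^2(\Omega)}^2 + \tfrac{2\mu}{d}\|\ddiv\bm v_h\|_{L^2(\Omega)}^2 \le \lambda\|\ddiv\bm v_h\|_{L^2(\Omega)}^2 + 2\mu\|\varepsilon(\bm v_h)\|_{L^2(\Omega)}^2 = \|\bm v\|_{A_{\bm u}}^2 .
\]
Combining this with the previous display yields $\|B\bm v\|_{M_p^{-1}}\le\|\ddiv\bm v_h\|_{L^2(\Omega)}\le\zeta^{-1}\|\bm v\|_{A_{\bm u}}$, which is the claim.

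There is no serious obstacle here; the only points needing care are the bookkeeping between a finite-element function and its coefficient vector, so that $M_p^{-1}$ genuinely realizes the dual of the $L^2$ pairing on $Q_h$, and the observation that the constant $\zeta$ is exactly the one that makes the split $\lambda+\tfrac{2\mu}{d}$ absorb the Korn-free bound $\|\ddiv\bm v_h\|^2\le d\|\varepsilon(\bm v_h)\|^2$.
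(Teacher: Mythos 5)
Your proof is correct and follows essentially the same route as the paper's: the dual-norm/Cauchy--Schwarz step is just the paper's observation that $\|B\bm v\|_{M_p^{-1}}=\|P_{Q_h}\ddiv\bm v_h\|_{L^2}\le\|\ddiv\bm v_h\|_{L^2}$, and your pointwise bound $(\ddiv\bm v_h)^2\le d\,|\varepsilon(\bm v_h)|^2$ combined with the split $\zeta^2=\lambda+2\mu/d$ is precisely the ``direct computation'' giving $a(\bm v,\bm v)\ge\zeta^2\|\ddiv\bm v\|^2$. You have merely spelled out details the paper leaves implicit, so no changes are needed.
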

\begin{proof}
%\edit{}Observe that, for $d=2,3$, $a(\bm{v},\bm{v}) \leq (2\mu + d \lambda ) (\epsilon(\bm{v}), \epsilon(\bm{v}))$, for any $\bm{v}\in \bm{V}_h$.
By direct computation, %\edit{}this leads to
\[
a(\bm{v}, \bm{v}) \geq \zeta^2 \| \ddiv \bm{v}\|^2 \geq \zeta^2 \| P_{Q_h} \ddiv \bm{v} \|^2,
\]
where $P_{Q_h}$ is the $L^2$-projection from $Q$ onto $Q_h$.  As an abuse of notation, we use $\bm{v}$ for the corresponding vector representation and write the above inequality in matrix form,
%this leads to
%\[ \zeta^2 \| \ddiv \bm{v} \|^2 \leq a(\bm{v},\bm{v}) = \| \bm{v} \|_{A}^2. \]
%Using this along with,
%\[ \zeta^2 \| \ddiv \bm{v} \|^2 \geq \zeta^2 \| P_{Q_h} \ddiv \bm{v} \|^2 = \left(\lambda +  \frac{2\mu}{d} \right)\| B \bm{v} \|_{M_p^{-1}}^2,\]
%where $P_{Q_h}$ is the $L^2$-projection from $Q$ onto $Q_h$,
concluding that
\[ \| B \bm{v} \|_{M_p^{-1}} \leq \frac{1}{\zeta} \| \bm{v} \|_{A_{\bm u}}.\]
%$$\zeta^2 \| \ddiv \bm{v} \|^2 \geq \zeta^2 \| P_{Q_h} \ddiv \bm{v} \|^2 = \left(\lambda +  \frac{2\mu}{d} \right)\| B_u \bm{v} \|_{M_p^{-1}}^2 \leq \| \bm{v} \|_{A}^2,$$
\end{proof}
\begin{corollary} \label{corollary:divb_leq_Ab}
Considering only the bubble component for Lemma \ref{lemma:div_leq_A}, we have
\[ \| B_b \bm{x}_b \|_{M_p^{-1}} \leq \| \bm{x}_b \|_{A_{bb}}^2 \leq \| \bm{x}_b \|_{D_{bb}}^2.
\]
\end{corollary}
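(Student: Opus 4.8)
The plan is to read this off directly from Lemma~\ref{lemma:div_leq_A} together with the spectral equivalence~\eqref{eqn:ADequiv}; no new ideas are needed. The first step is to note that, under the decomposition $\bm{V}_h=\bm{V}_{h,1}\oplus\bm{V}_b$, a bubble function $\bm{x}_b\in\bm{V}_b$ (identified with its coefficient vector) satisfies $a(\bm{x}_b,\bm{x}_b)=(A_{bb}\bm{x}_b,\bm{x}_b)=\|\bm{x}_b\|_{A_{bb}}^2$ and $-(\ddiv\bm{x}_b,q_h)=(B_b\bm{x}_b,q_h)$ for every $q_h\in Q_h$, simply because these are the $(b,b)$ block and the pressure--bubble block of the assembled forms. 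Hence Lemma~\ref{lemma:div_leq_A}, whose proof only uses the pointwise inequality $a(\bm{v},\bm{v})\ge\zeta^2\|\ddiv\bm{v}\|^2$ together with the $L^2$-projection onto $Q_h$, applies verbatim with $\bm{v}=\bm{x}_b$, with $A_{\bm u}$ replaced by $A_{bb}$ and $B$ replaced by $B_b$; this yields the first inequality (carrying along the factor $1/\zeta$ inherited from the lemma).

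For the second inequality I would simply invoke~\eqref{eqn:ADequiv}, i.e. $\|\bm{x}_b\|_{A_{bb}}\le\|\bm{x}_b\|_{D_{bb}}$, which holds because $D_{bb}-A_{bb}=(d+1)\diag(A_{bb})-A_{bb}$ is symmetric positive semidefinite by construction of $D_{bb}$. Squaring and chaining the two bounds then gives the stated chain of inequalities.

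I do not expect any genuine obstacle here: the only point needing a little care is the bookkeeping of constants and powers, namely making sure that the factor $1/\zeta$ (or $1/\zeta^2$ after squaring) and the squared-versus-unsquared norms are written consistently with how the corollary is actually used in the proof of Theorem~\ref{thm:diag_wellposed}. Everything else is an immediate specialization of results already established above.
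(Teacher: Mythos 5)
Your argument matches the paper's own proof: the first inequality is obtained by running the proof of Lemma~\ref{lemma:div_leq_A} restricted to the bubble block (so $A_{\bm u}\to A_{bb}$, $B\to B_b$), and the second follows from the spectral equivalence~\eqref{eqn:ADequiv}. Your caution about bookkeeping is also well placed, since the corollary as printed mixes squared and unsquared norms and drops the $1/\zeta$ factor that is actually carried along when the estimate is used in Appendix~B.
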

\begin{proof}
The first inequality follows the same arguments as the proof of Lemma \ref{lemma:div_leq_A} and the second inequality comes from the spectral equivalence of $A_{bb}$ and $D_{bb}$, i.e., \eqref{eqn:ADequiv}.
%~\cite{Rodrigo2017}.
\end{proof}

With the above result, we now show the well-posedness of the system given by Theorem \ref{thm:diag_wellposed}, restated here.
\begin{theorem}
If $(\bm{V}_h, \bm{W}_h, Q_h)$ is Stokes-Biot stable, then:
\begin{align}
& \sup_{\bm{0} \neq \bm{x}_h \in \bm{X}_h} \sup_{\bm{0} \neq \bm{y}_h \in \bm{X}_h } \frac{ (\mathcal{A}^D \bm{x}_h, \bm{y}_h)}{ \|\bm{x}_h\|_{\mathcal{D}} \|\bm{y}_h\|_{\mathcal{D}}} \leq \tilde{\varsigma},\\ %\label{supsup-AD} \\
& \inf_{\bm{0} \neq \bm{y}_h \in \bm{X}_h} \sup_{\bm{0} \neq \bm{x}_h \in \bm{X}_h} \frac{ (\mathcal{A}^D \bm{x}_h, \bm{y}_h)}{ \|\bm{x}_h\|_{\mathcal{D}} \|\bm{y}_h\|_{\mathcal{D}} } \geq \tilde{\gamma}, %\label{infsup-AD}
\end{align}
where,
\begin{equation*} %\label{eqn:weighted_norm_matrix}
\mathcal{D}  =
\begin{pmatrix}
  D_{bb}	& A_{bl}	& 0		& 0 \\
  A_{bl}^T	& A_{ll}	& 0		& 0 \\
  0		& 0 		& \left(\frac{\alpha^2}{\zeta^2} + \frac{1}{M}\right) M_p 	& 0 \\
  0 		& 0		& 0			& \tau M_{\bm w} + \tau^2 c_p A_{\bm w}
\end{pmatrix},
\end{equation*}
and $A_{\bm w} := B_{\bm w}^T M_p^{-1} B_{\bm w}$.
\end{theorem}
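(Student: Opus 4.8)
The plan is to verify, block by block, the hypotheses of the abstract parameter-robust well-posedness result of~\cite{HongKraus} for the perturbed operator $\mathcal{A}^D$ equipped with the norm induced by $\mathcal{D}$, exploiting the facts that $\mathcal{A}^D$ and $\mathcal{D}$ are obtained from $\mathcal{A}$ and from the Riesz operator of $\vertiii{\cdot}$ by the \emph{single} replacement $A_{bb}\mapsto D_{bb}$, and that this replacement is a two-sided spectral equivalence by~\eqref{eqn:ADequiv}. The first step is to record that
\[
(\mathcal{D}\bm{x}_h,\bm{x}_h)=\|\bm{u}_h\|_{A_{\bm u}^D}^2+c_p^{-1}\|p_h\|_{M_p}^2+\tau\|\bm{w}_h\|_{M_{\bm w}}^2+\tau^2 c_p\|B_{\bm w}\bm{w}_h\|_{M_p^{-1}}^2,
\]
so that $\|\bm{x}_h\|_{\mathcal{D}}$ and $\vertiii{\bm{x}_h}$ differ only in the displacement term; combined with~\eqref{eqn:ADequiv} (with $\eta\ge 1$ depending only on shape regularity) this gives $\vertiii{\bm{x}_h}\le\|\bm{x}_h\|_{\mathcal{D}}\le\eta\,\vertiii{\bm{x}_h}$. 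The additional ingredients are Lemma~\ref{lemma:div_leq_A}, Corollary~\ref{corollary:divb_leq_Ab}, and the dual estimate $\|B_{\bm u}^Tq_h\|_{(A_{\bm u}^D)^{-1}}\ge\tfrac{1}{\eta}\tfrac{\gamma_B}{\zeta}\|q_h\|_{M_p}$ (cf.\ Lemma~\ref{lem:stokesP}, a direct consequence of Stokes stability of $\bm{V}_h\times Q_h$ and~\eqref{eqn:ADequiv}, hence not circular).

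For the continuity bound~\eqref{supsup-AD} I would estimate $(\mathcal{A}^D\bm{x}_h,\bm{y}_h)$ term by term. The displacement--displacement contribution equals $a^D(\bm{u}_h,\bm{v}_h)=(A_{\bm u}^D\bm{u}_h,\bm{v}_h)\le\|\bm{u}_h\|_{A_{\bm u}^D}\|\bm{v}_h\|_{A_{\bm u}^D}$ by Cauchy--Schwarz, i.e.\ it is controlled with constant $1$ by the displacement parts of $\|\bm{x}_h\|_{\mathcal{D}}$ and $\|\bm{y}_h\|_{\mathcal{D}}$. Every remaining block of $\mathcal{A}^D$ is identical to that of $\mathcal{A}$, involves the displacement only through $\ddiv\bm{u}_h$, and was already bounded in the proof of~\eqref{supsup-B}; since Lemma~\ref{lemma:div_leq_A} together with~\eqref{eqn:ADequiv} gives $\zeta\|B_{\bm u}\bm{v}_h\|_{M_p^{-1}}\le\|\bm{v}_h\|_{A_{\bm u}}\le\|\bm{v}_h\|_{A_{\bm u}^D}$, the same bounds persist with $\vertiii{\cdot}$ replaced by $\|\cdot\|_{\mathcal{D}}$. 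Summing yields~\eqref{supsup-AD} with $\tilde{\varsigma}$ depending only on $\varsigma$ and $\eta$.

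For the inf-sup bound~\eqref{infsup-AD} I would reproduce the Fortin-type test-function construction used in~\cite{Rodrigo2017,HongKraus} to prove~\eqref{infsup-B}: given $\bm{y}_h=(\bm{v}_h,q_h,\bm{r}_h)$, build $\bm{x}_h$ as a parameter-weighted combination of $\bm{y}_h$, a displacement correction $\bm{z}_{q_h}$ furnished by Stokes--Biot stability of $\bm{V}_h\times Q_h$ that (together with the $\tfrac{1}{M}M_p$ block) recovers the pressure norm $c_p^{-1}\|q_h\|_{M_p}^2$, and a correction recovering $\tau^2 c_p\|B_{\bm w}\bm{r}_h\|_{M_p^{-1}}^2$; then evaluate $(\mathcal{A}^D\bm{x}_h,\bm{y}_h)$ and absorb the cross terms with Young's inequality. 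The only inputs touched by the perturbation are: coercivity of the displacement block, which for $\mathcal{A}^D$ is the \emph{identity} $a^D(\bm{u}_h,\bm{u}_h)=\|\bm{u}_h\|_{A_{\bm u}^D}^2$ and so holds with constant $1$; the upper estimate $\zeta\|B_{\bm u}\bm{v}_h\|_{M_p^{-1}}\le\|\bm{v}_h\|_{A_{\bm u}^D}$; the bound $\|\bm{z}_{q_h}\|_{A_{\bm u}^D}\le\eta\|\bm{z}_{q_h}\|_{A_{\bm u}}$ needed when the correction is measured in the new norm; and the lower estimate $\|B_{\bm u}^Tq_h\|_{(A_{\bm u}^D)^{-1}}\ge\tfrac{1}{\eta}\tfrac{\gamma_B}{\zeta}\|q_h\|_{M_p}$. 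Since the constants that appear are $\eta$, the Stokes inf-sup constant $\gamma_B$, the constants $\varsigma,\gamma$ from~\eqref{supsup-B}--\eqref{infsup-B}, and absolute numbers — none depending on $\lambda,\mu,\alpha,M,\bm{K},h,\tau$ — the resulting $\tilde{\gamma}$ is parameter-robust.

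The step I expect to be the main obstacle is the inf-sup argument: one must check that substituting $a^D$ for $a$ and $\|\cdot\|_{A_{\bm u}^D}$ for $\|\cdot\|_{A_{\bm u}}$ does not break the Fortin construction — in particular, that the bubble correction whose existence is guaranteed by Stokes--Biot stability (a statement about $a$ and $\bm{V}_h$) remains controllable after the bubble block has been diagonalized, and that the weighting parameters in the construction can still be chosen independently of the physical parameters. This is precisely where the two-sided equivalence~\eqref{eqn:ADequiv} and its dual form~\eqref{eqn:ADinv_to_L2} are indispensable; everything else is a careful but essentially routine repetition of the unperturbed argument of~\cite{Rodrigo2017}.
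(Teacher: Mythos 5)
Your plan is correct and follows essentially the same route as the paper's proof in Appendix~\ref{sec:appA}: the two-sided equivalence \eqref{eqn:ADequiv}, Lemma~\ref{lemma:div_leq_A} (and Corollary~\ref{corollary:divb_leq_Ab}), and the modified Stokes estimate \eqref{eqn:ADinv_to_L2} feed a Fortin-type test-function construction (displacement correction $\bm{z}$ with $\|\bm{z}\|_{A_{\bm u}^D}=\|p\|_{M_p}$, pressure test function augmented by $\tau\,\ddiv\bm{w}$), with cross terms absorbed by Young's inequality and weights $\theta_1,\theta_2$ chosen independently of $h$, $\tau$, and the physical parameters, exactly as in the paper. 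The continuity bound via term-by-term Cauchy--Schwarz is likewise the intended (and in the paper, omitted as routine) argument.
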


\begin{proof}
From \eqref{eqn:stokesA-inf-sup}, for a given $p \in Q_h$, there exists $\bm{z} \in \bm{V}_h$, such that\\
$(p, B_{\bm{u}} \bm{z}) \geq \frac{\gamma_B}{\eta\zeta} \| p \|_{M_p}^2$ and $\| \bm{z} \|_{A_{\bm{u}}^D} = \| p \|_{M_p}$.  Let $\bm{v} = \bm{u} - \theta_1 \bm{z}$, $\bm{r} = \bm{w}$, and\\
$q = -p - \theta_2 \tau \ddiv \bm{w}$, then, by the Cauchy-Shwarz and Young's inequality,
\begin{align*}
( \mathcal{A}^D(\bm{u}, \bm{w}, p^T), (\bm{v}, \bm{r}, q)^T ) &= \| \bm{u} \|_{A_{\bm{u}}^D}^2 - \theta_1 (A_{\bm{u}}^D \bm{u}, \bm{z}) + \theta_1 \alpha(p,  B_{\bm{u}} \bm{z}) + \tau \| \bm{w} \|^2_{M_{\bm w}}  \\
	& \quad + \frac{1}{M} \| p \|_{M_p}^2 + \theta_2 \tau \frac{1}{M}(p,  B_{\bm{w}} \bm{w}) + \theta_2 \alpha \tau ( B_{\bm{u}} \bm{u}, M_p^{-1} B_{\bm{w}} \bm{w})\\
	& \quad + \theta_2 \tau^2 \|  B_{\bm{w}} \bm{w} \|_{M_p^{-1}}^2\\
	& \geq \| \bm{u} \|_{A_{\bm{u}}^D}^2 - \frac{1}{2} \| \bm{u} \|_{A_{\bm{u}}^D}^2 - \frac{\theta_1^2}{2} \| \bm{z}\|_{A_{\bm{u}}^D}^2 + \theta_1 \frac{\alpha \gamma_B}{\eta\zeta} \| p \|_{M_p}^2 \\
	& \quad + \tau \| w \|^2_{M_{\bm w}} + \frac{1}{M} \| p \|_{M_p}^2   - \frac{3 \theta_2}{2} \frac{1}{M^2} \| p \|_{M_p}^2  \\
	& \quad - \frac{\theta_2}{6} \tau^2 \|  B_{\bm{w}} \bm{w} \|_{M_p^{-1}}^2 - \frac{\theta_2}{2} \alpha^2 \|  B_{\bm{u}} \bm{u} \|_{M_p^{-1}}^2\\
	& \quad - \frac{\theta_2}{2} \tau^2\|  B_{\bm{w}} \bm{w} \|_{M_p^{-1}}^2 + \theta_2 \tau^2 \|  B_{\bm{w}} \bm{w} \|_{M_p^{-1}}^2.
\end{align*}
Combining terms and applying Lemma~\ref{lemma:div_leq_A},
\begin{align*}
( \mathcal{A}^D(\bm{u}, \bm{w}, p^T), (\bm{v}, \bm{r}, q)^T )
	& \geq \left( \frac{1}{2} - \frac{\theta_2}{2 }\frac{\alpha^2}{\zeta^2} \right) \| \bm{u} \|_{A_{\bm{u}}^D}^2 + \tau \| \bm{w} \|^2_{M_{\bm w}} + \frac{1}{3} \theta_2 \tau^2 \|  B_{\bm{w}} \bm{w} \|_{M_p^{-1}}^2  \\
	& \quad + \left( \theta_1 \frac{\alpha \gamma_B}{\eta\zeta} - \frac{\theta^2_1}{2} \right) \| p \|_{M_p}^2 + \left( 1 - \frac{3}{4} \frac{2\theta_2}{M} \right) \frac{1}{M} \| p \|_{M_p}^2.
\end{align*}
Choosing $\theta_1 = \frac{\alpha \gamma_B}{2 \eta\zeta}$ and $\theta_2 =  \frac{1}{2}  \left( \frac{\alpha^2}{\zeta^2} + \frac{1}{M} \right)^{-1}$,
\begin{align*}
( \mathcal{A}^D(\bm{u}, \bm{w}, p^T), (\bm{v}, \bm{r}, q)^T )
	& \geq \left(\frac{1}{2}-\frac{1}{4}\right) \| \bm{u} \|_{A_{\bm{u}}^D}^2 + \tau \| \bm{w} \|^2_{M_{\bm w}} \\
	& \quad + \frac{1}{6} \tau^2 \left( \frac{\alpha^2}{\zeta^2} + \frac{1}{M} \right)^{-1} \|  B_{\bm{w}} \bm{w} \|_{M_p^{-1}}^2 + \left( \frac{3\alpha^2\gamma_B^2}{8\eta^2\zeta^2} \right) \| p \|_{M_p}^2\\
	& \quad + \left( 1 - \frac{3}{4} \right) \frac{1}{M} \| p \|_{M_p}^2 \\
	& \geq \tilde{\gamma} \| \left( \bm{u},  \bm{w}, p\right) \|_{\mathcal{D}}^2,
\end{align*}
where $\tilde{\gamma} = \min \left\{  \frac{1}{6}, \frac{3\gamma_B^2}{8\eta^2}  \right \}$.
\end{proof}

\section{Proof of Theorem \ref{thm:ElimWP}}\label{sec:appB}
To start, we use a result from \cite{BoffiBrezziFortin}, which is restated here for convenience.
\begin{proposition}[Proposition 3.4.5 in \cite{BoffiBrezziFortin}] \label{prop-SVD}
Let $B$ be an $m\times n$ matrix,
$S_X$ be an $n\times n$ symmetric positive definite matrix, and
$S_Y$ be an $m\times m$ symmetric positive definite matrix.
Define the following norms: $\| \bm{x} \|_X^2:=(S_X \bm{x})^T(S_X \bm{x})$ and $\| \bm{y} \|_Y^2:=(S_Y \bm{y})^T(S_Y \bm{y})$ for $\bm{x}\in\mathbb{R}^n,\ \bm{y}\in\mathbb{R}^m$,
and let $\beta$ be defined as
\[
	\inf_{\bm{y}\in H^T}\sup_{\bm{x}\in K^T} \frac{ (B\bm{x}, \bm{y}) }{\|\bm{x}\|_{X}\|\bm{y}\|_{Y}}=: \beta,
\]
where $K := \ker B$ and $H := \ker B^T$.
Then, $\beta$ coincides with the smallest positive singular value of the matrix $S_Y B S_X$.
\end{proposition}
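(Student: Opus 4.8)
The plan is to reduce the statement to the standard variational (Courant--Fischer) characterization of singular values by a change of variables that trivializes the two weighted norms. Concretely I would set $\bm{x} = S_X\hat{\bm{x}}$ and $\bm{y} = S_Y\hat{\bm{y}}$, so that the weighted norms become the Euclidean norm $\|\cdot\|$ of $\hat{\bm{x}}$ and $\hat{\bm{y}}$, while the pairing transforms as $(B\bm{x},\bm{y}) = (S_Y B S_X\hat{\bm{x}},\hat{\bm{y}}) = (\tilde{B}\hat{\bm{x}},\hat{\bm{y}})$ with $\tilde{B} := S_Y B S_X$. Since $S_X$ and $S_Y$ are invertible, this substitution is a linear isomorphism that is simultaneously an isometry between the weighted inner products and the Euclidean one. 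I would then verify that it carries the constraint sets correctly: using $\ker\tilde{B} = S_X^{-1}\ker B$ and $\ker\tilde{B}^T = S_Y^{-1}\ker B^T$ together with the isometry property, the $X$-orthogonal complement of $\ker B$ is mapped onto the Euclidean orthogonal complement of $\ker\tilde{B}$, and likewise on the $Y$-side for $\ker B^T$ and $\ker\tilde{B}^T$. Hence $\beta$ is left unchanged when $B$, the weighted norms, and the constraint sets $K^T = (\ker B)^\perp$, $H^T = (\ker B^T)^\perp$ are all replaced by $\tilde{B}$, the Euclidean norm, and the Euclidean orthogonal complements of $\ker\tilde{B}$ and $\ker\tilde{B}^T$.

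It then remains to prove the purely Euclidean fact that, for any matrix $\tilde{B}$,
\[
\inf_{\hat{\bm{y}}\in(\ker\tilde{B}^T)^\perp\setminus\{\bm{0}\}}\ \sup_{\hat{\bm{x}}\in(\ker\tilde{B})^\perp\setminus\{\bm{0}\}}\ \frac{(\tilde{B}\hat{\bm{x}},\hat{\bm{y}})}{\|\hat{\bm{x}}\|\,\|\hat{\bm{y}}\|}
\]
equals the smallest positive singular value of $\tilde{B}$. For this I would invoke the singular value decomposition $\tilde{B} = U\Sigma V^T$: the right singular vectors $v_1,\dots,v_r$ associated with the nonzero singular values $\sigma_1\geq\cdots\geq\sigma_r>0$ form an orthonormal basis of $(\ker\tilde{B})^\perp$, and the matching left singular vectors $u_1,\dots,u_r$ form an orthonormal basis of $(\ker\tilde{B}^T)^\perp$. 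Writing $\hat{\bm{x}} = \sum_{i=1}^r a_i v_i$ and $\hat{\bm{y}} = \sum_{i=1}^r b_i u_i$ gives $(\tilde{B}\hat{\bm{x}},\hat{\bm{y}}) = \sum_{i=1}^r \sigma_i a_i b_i$, $\|\hat{\bm{x}}\| = \|\bm{a}\|$, and $\|\hat{\bm{y}}\| = \|\bm{b}\|$. For fixed $\bm{b}\neq\bm{0}$, Cauchy--Schwarz gives $\sup_{\bm{a}\neq\bm{0}} \sum_i \sigma_i a_i b_i/\|\bm{a}\| = \bigl(\sum_i \sigma_i^2 b_i^2\bigr)^{1/2}$, attained at $\bm{a}\propto(\sigma_i b_i)_i$; minimizing $\bigl(\sum_i \sigma_i^2 b_i^2\bigr)^{1/2}/\|\bm{b}\|$ over $\bm{b}\neq\bm{0}$ yields $\sigma_r$, attained at $\bm{b} = e_r$. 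Hence the displayed quantity equals $\sigma_r$, the smallest positive singular value of $\tilde{B} = S_Y B S_X$, which is the claimed identity.

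I expect the only genuinely delicate step to be the bookkeeping in the first paragraph --- confirming that the orthogonal complements of the kernels, taken in the weighted inner products, push forward under the change of variables to the Euclidean orthogonal complements of $\ker\tilde{B}$ and $\ker\tilde{B}^T$ (this is also where one fixes the reading of $K^T$ and $H^T$ in the statement as those complements, and where the precise relationship between the norms and $S_X$, $S_Y$ must be pinned down). Once the problem has been transported to the Euclidean setting, the SVD computation is routine. I would also note in passing that $\beta>0$ implicitly requires $B\neq\bm{0}$; if $B=\bm{0}$ the constraint sets are empty and the statement is vacuous.
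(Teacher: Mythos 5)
The paper never proves this proposition: it is quoted from Boffi--Brezzi--Fortin and used as a black box in Appendix~B, so your argument supplies a proof where the paper only cites. Your route is the standard one and its core is sound: reduce to the Euclidean setting by a change of variables that trivializes the weighted norms, check that the weighted orthogonal complements of $\ker B$ and $\ker B^T$ push forward to the Euclidean complements of the kernels of the transformed matrix, and then compute the inf-sup by the SVD (the Cauchy--Schwarz maximization giving $\bigl(\sum_i\sigma_i^2b_i^2\bigr)^{1/2}$ and the minimization over $\bm{b}$ giving the smallest positive singular value). That last, purely Euclidean computation is correct, and your insistence on taking the complements with respect to the weighted inner products is not pedantry: the value of the inf-sup genuinely depends on which complements are used, so that bookkeeping is needed.

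The one real issue sits exactly where you wave your hands. With the norms as literally written, $\|\bm{x}\|_X=\|S_X\bm{x}\|_2$, your substitution $\bm{x}=S_X\hat{\bm{x}}$ is \emph{not} an isometry (it gives $\|\bm{x}\|_X=\|S_X^2\hat{\bm{x}}\|_2$); the isometric substitution is $\hat{\bm{x}}=S_X\bm{x}$, which turns the pairing into $(S_Y^{-1}BS_X^{-1}\hat{\bm{x}},\hat{\bm{y}})$ and yields $\beta$ equal to the smallest positive singular value of $S_Y^{-1}BS_X^{-1}$, not of $S_YBS_X$. Indeed the statement as restated here fails already for $m=n=1$, $B=1$, $S_X=s$, $S_Y=t$: then $\beta=1/(st)$ while the smallest singular value of $S_YBS_X$ is $st$. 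So the restatement carries a typo: either the norms must be read as $\|\bm{x}\|_X=\|S_X^{-1}\bm{x}\|_2$ (Gram matrices $S_X^{-2}$, $S_Y^{-2}$) or the final matrix must be $S_Y^{-1}BS_X^{-1}$. Your substitution silently adopts the first reading, which is in fact the one consistent with how the proposition is invoked in Appendix~B (norms $\|\cdot\|_{\tilde{\mathcal{D}}}$ and the matrix $\tilde{\mathcal{D}}^{-1/2}\mathcal{S}\tilde{\mathcal{D}}^{-1/2}$, i.e.\ $S_X=S_Y=\tilde{\mathcal{D}}^{-1/2}$), but you should state that choice explicitly rather than assert the isometry; as written, the first sentence of your proof is false under the stated norm definition, and fixing it is a one-line remark, not a structural change.
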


With the above result, we now show the well-posedness of the bubble-eliminated system given by Theorem \ref{thm:ElimWP}, restated here.
\begin{theorem}%\label{thm:ElimWP} % bubble-eliminated well posed
    If the full system (\ref{block_form}) is well-posed, satisfying \eqref{supsup-B} and \eqref{infsup-B} with respect to the norm~\eqref{weighted-norm}, then the bubble-eliminated system, \eqref{block_form_elim},
    satisfies the following inequalities for $\bm{x}^E = (\bm{u}_l, p_h, \bm{w}_h)^T \in \bm{X}^E_h$ and $\bm{y}^E = (\bm{v}_l, q_h, \bm{r}_h)^T \in \bm{X}^E_h$,
    \begin{equation} %\label{infsup-E}
	\inf_{\bm{0}\neq \bm{x}^E \in \bm{X}_h^E} \sup_{\bm{0} \neq \bm{y}^E\in \bm{X}_h^E }
	\frac{(\mathcal{A}^E \bm{x}^E, \bm{y}^E)}{\|\bm{x}^E\|_{\mathcal{D}^E}\|\bm{y}^E\|_{\mathcal{D}^E}} \geq \gamma^*,
    \end{equation}
    and,
    \begin{equation} %\label{continuity-E}
      \sup_{\bm{0} \neq \bm{x}^E \in \bm{X}_h^E} \sup_{\bm{0} \neq \bm{y}^E \in \bm{X}^E_h} 	\frac{(\mathcal{A}^E \bm{x}^E, \bm{y}^E)}{\| \bm{x}^E \|_{\mathcal{D}^E} \| \bm{y}^E \|_{\mathcal{D}^E}} \leq \varsigma ,
    \end{equation}
    where,
	\begin{equation*}
	  \mathcal{D}^E=
	  \left(\begin{array}{ccc}
	  A_{ll} - A_{bl}^T D_{bb}^{-1} A_{bl}		& 0				& 0 \\
	  0	 	& \alpha^2 B_b D_{bb}^{-1} B_b^T + c_p^{-1}M_p 	& 0 \\
	  0		& 0			& \tau M_{\bm w} + \tau^2 c_p A_{\bm w}
	  \end{array}\right),
	\end{equation*}
	with
	\begin{equation} %\label{eqn:weighted_norm_elim}
\| \bm{x}^E \|_{\mathcal{D}^E}^2 =  (\mathcal{D}^E \bm{x}^E, \bm{x}^E).
	\end{equation}
Thus, (\ref{block_form_elim}) is well-posed with respect to the weighted norm (\ref{eqn:weighted_norm_elim}).
\end{theorem}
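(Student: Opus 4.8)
The plan is to realise $\mathcal{A}^E$ as the Schur complement of $\mathcal{A}^D$ with respect to the symmetric positive definite bubble block $D_{bb}$, and then transfer the well-posedness of $\mathcal{A}^D$ — available from Theorem~\ref{thm:diag_wellposed} under the same Stokes--Biot hypothesis that makes $\mathcal{A}$ well-posed — to this Schur complement. The central object is the discrete bubble-harmonic extension $E:\bm{X}_h^E\to\bm{X}_h$ given by $E(\bm{u}_l,p_h,\bm{w}_h)=\big(-D_{bb}^{-1}(A_{bl}\bm{u}_l+\alpha B_b^T p_h),\,\bm{u}_l,\,p_h,\,\bm{w}_h\big)$, chosen so that the first block-row of $\mathcal{A}^D E$ vanishes identically. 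A direct block computation then shows $\mathcal{A}^D E\bm{x}^E=(\bm{0},\,\mathcal{A}^E\bm{x}^E)^T$, whence $(\mathcal{A}^D E\bm{x}^E,\bm{y})=(\mathcal{A}^E\bm{x}^E,\bm{y}^E)$ for every $\bm{y}\in\bm{X}_h$ whose non-bubble part equals $\bm{y}^E$; in particular $(\mathcal{A}^D E\bm{x}^E,E\bm{y}^E)=(\mathcal{A}^E\bm{x}^E,\bm{y}^E)$.

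Next I would establish the norm identity $\|E\bm{x}^E\|_{\mathcal{D}}=\|\bm{x}^E\|_{\mathcal{D}^E}$. Since $-D_{bb}^{-1}A_{bl}\bm{u}_l$ is the $A_{\bm u}^D$-minimizing bubble for fixed $\bm{u}_l$, the two pieces of the bubble component of $E\bm{x}^E$ are $A_{\bm u}^D$-orthogonal, which gives $\|([E\bm{x}^E]_b,\bm{u}_l)\|_{A_{\bm u}^D}^2=\|\bm{u}_l\|_{\AuE}^2+\alpha^2(D_{bb}^{-1}B_b^Tp_h,B_b^Tp_h)$; adding the untouched pressure and velocity weights of $\mathcal{D}$ reproduces exactly $(\mathcal{D}^E\bm{x}^E,\bm{x}^E)$. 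With this isometry, the continuity bound \eqref{continuity-E} is immediate: $(\mathcal{A}^E\bm{x}^E,\bm{y}^E)=(\mathcal{A}^D E\bm{x}^E,E\bm{y}^E)$ is bounded by the continuity constant of $\mathcal{A}^D$ times $\|E\bm{x}^E\|_{\mathcal{D}}\|E\bm{y}^E\|_{\mathcal{D}}=\|\bm{x}^E\|_{\mathcal{D}^E}\|\bm{y}^E\|_{\mathcal{D}^E}$.

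For the inf-sup bound \eqref{infsup-E}, given $\bm{x}^E\neq\bm{0}$ I would apply the inf-sup property of $\mathcal{A}^D$ to $\bm{x}=E\bm{x}^E$ — using it in the form $\inf_{\bm x}\sup_{\bm y}$, which equals the $\inf_{\bm y}\sup_{\bm x}$ form of Theorem~\ref{thm:diag_wellposed} because both coincide with the smallest singular value of $\mathcal{D}^{-1/2}\mathcal{A}^D\mathcal{D}^{-1/2}$ (this is the role of Proposition~\ref{prop-SVD}) — to obtain $\bm{y}=(\bm{y}_b,\bm{y}^E)$ with $(\mathcal{A}^D E\bm{x}^E,\bm{y})\ge\tilde\gamma\|E\bm{x}^E\|_{\mathcal{D}}\|\bm{y}\|_{\mathcal{D}}$. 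Because the bubble block-row of $\mathcal{A}^D E\bm{x}^E$ is zero, the numerator is unchanged if $\bm{y}_b$ is replaced by the harmonic bubble, i.e.\ if we test with $E\bm{y}^E$; it then remains only to compare $\|E\bm{y}^E\|_{\mathcal{D}}=\|\bm{y}^E\|_{\mathcal{D}^E}$ against $\|\bm{y}\|_{\mathcal{D}}$ from below.

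The main obstacle — and the precise point where the indefiniteness of $\mathcal{A}$ bites — is this last comparison: $E\bm{y}^E$ is \emph{not} the $\mathcal{D}$-norm-minimizing extension of $\bm{y}^E$, since its harmonic bubble carries the extra term $-\alpha D_{bb}^{-1}B_b^Tp_h$ from the pressure coupling, so a priori $\|E\bm{y}^E\|_{\mathcal{D}}$ could exceed $\|\bm{y}\|_{\mathcal{D}}$. The discrepancy is controlled by $\alpha^2(D_{bb}^{-1}B_b^Tp_h,B_b^Tp_h)\le\frac{\alpha^2}{\zeta^2}\|p_h\|_{M_p}^2\le c_p^{-1}\|p_h\|_{M_p}^2$, which follows by duality from Lemma~\ref{lemma:div_leq_A} and Corollary~\ref{corollary:divb_leq_Ab}; this yields $\|E\bm{y}^E\|_{\mathcal{D}}^2\le 2\|\bm{y}\|_{\mathcal{D}}^2$ and hence \eqref{infsup-E} with $\gamma^*=\tilde\gamma/\sqrt2$. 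An alternative, more hands-on route, closer to the proof of Theorem~\ref{thm:BL_elim}, would be to first distill the bubble-eliminated inf-sup inequality \eqref{eqn:Bp_to_p_elim} from Stokes stability (Lemma~\ref{lem:stokesP}), a congruence factorization of the displacement block, and Proposition~\ref{prop-SVD}, and then run a Babuška-type test-function argument directly on $\mathcal{A}^E$; the bookkeeping is heavier but it never refers back to $\mathcal{A}^D$.
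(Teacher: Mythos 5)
Your proposal is correct, and while it rests on the same block-elimination structure as the paper's proof, the execution is genuinely different and in some respects cleaner. The paper factors $\mathcal{A}^D=L\mathcal{S}\tilde L^T$ as in \eqref{eqn:LSL}, proves two-sided spectral equivalence of $L^{-1}\mathcal{D}L^{-T}$ and $\tilde L^{-1}\mathcal{D}\tilde L^{-T}$ with the block-diagonal $\tilde{\mathcal{D}}$ (via Cauchy--Schwarz, Young, and Corollary~\ref{corollary:divb_leq_Ab}), transfers the inf-sup of $\mathcal{A}^D$ from Theorem~\ref{thm:diag_wellposed} to $\mathcal{S}$ with constant $\gamma^*=\tilde\gamma/16$, and only then invokes Proposition~\ref{prop-SVD} to pull the bound down to the diagonal sub-block $(\mathcal{D}^E)^{-1/2}\mathcal{A}^E(\mathcal{D}^E)^{-1/2}$. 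Your extension $E$ is exactly $\tilde L^{-T}$ applied to zero-bubble vectors, so the congruence is the same, but you exploit it differently: the $A_{\bm u}^D$-orthogonality of the two bubble contributions gives the \emph{exact} isometry $\|E\bm{x}^E\|_{\mathcal{D}}=\|\bm{x}^E\|_{\mathcal{D}^E}$ (your computation is right, since $\alpha^2(D_{bb}^{-1}B_b^Tp,B_b^Tp)$ is precisely the extra term in $\mathcal{D}^E$), continuity follows immediately by restriction, and the inf-sup needs only the one-sided comparison $\|E\bm{y}^E\|_{\mathcal{D}}^2\leq 2\|\bm{y}\|_{\mathcal{D}}^2$, which indeed follows from $\alpha^2\|B_b^Tq\|_{D_{bb}^{-1}}^2\leq(\alpha^2/\zeta^2)\|q\|_{M_p}^2\leq c_p^{-1}\|q\|_{M_p}^2$ (Lemma~\ref{lemma:div_leq_A}, Corollary~\ref{corollary:divb_leq_Ab}) together with $\|(\bm{y}_b,\bm{v}_l)\|_{A_{\bm u}^D}^2\geq\|\bm{v}_l\|_{\AuE}^2$. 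This buys a sharper constant ($\gamma^*=\tilde\gamma/\sqrt{2}$ versus $\tilde\gamma/16$) and dispenses with the four spectral-equivalence estimates; the only residual use of Proposition~\ref{prop-SVD} in your version is to pass between the two orderings of the inf-sup for the square, invertible $\mathcal{A}^D$, which is legitimate in finite dimensions since both constants equal the smallest singular value of $\mathcal{D}^{-1/2}\mathcal{A}^D\mathcal{D}^{-1/2}$ (alternatively, the test-function construction in Appendix~\ref{sec:appA} already delivers the $\inf_{\bm{x}}\sup_{\bm{y}}$ form you need, so this step can be bypassed entirely). Like the paper, you rely on Theorem~\ref{thm:diag_wellposed} rather than directly on the hypothesis about $\mathcal{A}$, which is consistent with the paper's own usage.
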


\begin{proof}
The matrix $\mathcal{A}^D$ given in (\ref{block_form_diag}) affords the following decomposition,
\begin{equation}\label{eqn:LSL}
	\mathcal{A}^D = L \mathcal{S} \tilde{L}^T,
\end{equation}
with
\begin{equation*}
  L^{-1} =
  \left(\begin{array}{cccc}
  I					&	0	& 0 	& 0 \\
  -A_{bl}^T D_{bb}^{-1}	&	I	& 0	& 0 \\
  \alpha B_b D_{bb}^{-1}	&	0	& I	& 0 \\
  0 					& 	0	& 0	& I
  \end{array}\right),
  \quad
    \tilde{L}^{-1} =
  \left(\begin{array}{cccc}
  I					&	0	& 0 	& 0 \\
  -A_{bl}^T D_{bb}^{-1}	&	I	& 0	& 0 \\
  -\alpha B_b D_{bb}^{-1}	&	0	& I	& 0 \\
  0 					& 	0	& 0	& I
  \end{array}\right),
\end{equation*}
and
\begin{equation*}% \label{eqn:schur_comp}
\mathcal{S}  =
\begin{pmatrix}
  D_{bb}&	0								& 0									 	& 0 \\
  0	& A_{ll} - A_{bl}^T D_{bb}^{-1} A_{bl}		& \alpha B_l^T - \alpha A_{bl}^T D_{bb}^{-1} B_b^T		& 0 \\
  0	& -\alpha B_l + \alpha B_{b} D_{bb}^{-1} A_{bl} 	& \alpha^2 B_b D_{bb}^{-1} B_b^T + \frac{1}{M} M_p 	& -\tau B_{\bm w} \\
  0 	& 0					& \tau B_{\bm w}^T			& \tau M_{\bm w}
\end{pmatrix}.
\end{equation*}
Note that $\mathcal{A}^E$ is a sub-matrix of $\mathcal{S}$.
Then, looking at the inf-sup condition \eqref{infsup-AD} \edit{\sout{.}}
we have, for $\bm{x} = (\bm{u}_b, \bm{u}_l, p_h, \bm{w}_h)^T \in \bm{X}_h$ and $\bm{y} = (\bm{v}_b, \bm{v}_l, q_h, \bm{r}_h)^T \in \bm{X}_h$,
\begin{align*}
\frac{(\mathcal{A}^D\bm{x},\bm{y})}{\|\bm{x}\|_{\mathcal{D}}\|\bm{y}\|_{\mathcal{D}}}
	=  \frac{(L\mathcal{S}\tilde{L}^T\bm{x},\bm{y})}{\|\bm{x}\|_{\mathcal{D}}\|\bm{y}\|_{\mathcal{D}}}
	=  \frac{(\mathcal{S}\bm{\xi},\bm{\varphi})}{\|\bm{\xi}\|_{\tilde{L}^{-1}\mathcal{D}\tilde{L}^{-T}}\|\bm{\eta}\|_{L^{-1}\mathcal{D} L^{-T}}},
\end{align*}
where $\bm{\xi}= \tilde{L}^T\bm{x}$, $\bm{\varphi} = L^T \bm{y}$.
%where $\bm{\xi}= \tilde{L}^T\bm{x}$, $\bm{\eta} = L^T \bm{y}$, and $\mathcal{D}$ is the block matrix that induces (\ref{weighted-norm}):
%\begin{equation*} %\label{eqn:weighted_norm_matrix}
%\mathcal{D}  =
%\begin{pmatrix}
%  D_{bb}	& A_{bl}	& 0		& 0 \\
%  A_{bl}^T	& A_{ll}	& 0		& 0 \\
%  0		& 0 		& \left(\frac{\alpha^2}{\zeta^2} + \frac{1}{M}\right) M_p 	& 0 \\
%  0 		& 0		& 0			& \tau M_{\bm w} + \tau^2 c_p A_{\bm w}
%\end{pmatrix}.
%\end{equation*}
We will proceed by showing that $L^{-1} \mathcal{D} L^{-T}$ and $\tilde{L}^{-1} \mathcal{D} \tilde{L}^{-T}$ are spectrally equivalent to the
following block diagonal matrix,
\begin{equation*}
  \tilde{\mathcal{D}}=
  \left(\begin{array}{cccc}
  D_{bb}		&	0								& 0			 	& 0 \\
  0			&	A_{ll} - A_{bl}^T D_{bb}^{-1} A_{bl}		& 0				& 0 \\
  0		 	& 	0	 	& \alpha^2 B_b D_{bb}^{-1} B_b^T + c_p^{-1}M_p 	& 0 \\
  0 			& 	0		& 0			& \tau M_{\bm w} + \tau^2 c_p A_{\bm w}
  \end{array}\right).
\end{equation*}
Note that $\mathcal{D}^E$, corresponding to the weighted norm on the bubble-eliminated system (\ref{eqn:weighted_norm_elim}), is a sub-matrix of $\tilde{\mathcal{D}}$.
%Note that $\tilde{\mathcal{D}}$ has the following sub-matrix, corresponding to the weighted norm on the bubble-eliminated system,
%\begin{equation}
%  \mathcal{D}^E=
%  \left(\begin{array}{ccc}
%  A_{ll} - A_{bl}^T D_{bb}^{-1} A_{bl}		& 0				& 0 \\
%  0	 	& \alpha^2 B_b D_{bb}^{-1} B_b^T + c_p^{-1}M_p 	& 0 \\
%  0		& 0			& \tau M_{\bm w} + \tau^2 c_p A_{w}
%  \end{array}\right).
%\end{equation}
%Here, $A_{\bm u}$ is the displacement block, $(\ddiv \bm{w}_h, \ddiv \bm{w}_h)\rightarrow A_{\bm w}$ , and $M_p$ and $M_{\bm{w}}$ are the scaled mass matrices for the P0 and RT0 spaces, respectively.

By direct computation, the Cauchy-Schwarz inequality, Young's inequality, and use of Corollary~\ref{corollary:divb_leq_Ab} we have, for $\bm{x} = (\bm{u}_b, \bm{u}_l, p_h, \bm{w}_h)^T \in \bm{X}_h$,
\begin{align*}
(L^{-1} \mathcal{D} L^{-T} \bm{x}, \bm{x}) =& (\tilde{\mathcal{D}} \bm{x}, \bm{x} ) + \alpha(B_b \bm{u}_b, p) + \alpha(B_b^T p, \bm{u}_b) \\
%	=& (\tilde{\mathcal{D}} \bm{x}, \bm{x} ) + \alpha(B_b \bm{x}_b, p) + \alpha(D_{bb}^{-1/2}B_b^T p, D_{bb}^{1/2}\bm{x}_b) \\
	\geq& (\tilde{\mathcal{D}} \bm{x}, \bm{x} ) - \alpha\|B_b\bm{u}_b\|_{M_p^{-1}} \|p\|_{M_p} - \alpha\|\bm{u}_b\|_{D_{bb}} \|p\|_{B_b D_{bb}^{-1} B_b^T}\\
%\intertext{Using the Cauchy-Schwarz inequality, with constants $\phi$ and $\theta$,}
%	\geq& (\tilde{\mathcal{D}} \bm{x}, \bm{x} ) - \frac{\phi}{2}\|B_b\bm{x}_b\|^2 - \frac{\alpha^2}{2\phi}\|p\|^2
%		 - \frac{\theta}{2}\|\bm{x}_b\|_{D_{bb}}^2 - \frac{\alpha^2}{2\theta}\|p\|_{B_b D_{bb}^{-1} B_b^T} \\
%	\geq& (\tilde{\mathcal{D}} \bm{x}, \bm{x} ) - \frac{\phi}{2}\frac{1}{\zeta^2}\|\bm{x}_b\|_{D_{bb}}^2 - \frac{\alpha^2}{2\phi}\|p\|^2
%		 - \frac{\theta}{2}\|\bm{x}_b\|_{D_{bb}}^2 - \frac{\alpha^2}{2\theta}\|p\|_{B_b D_{bb}^{-1} B_b^T} \\
	\geq& (\tilde{\mathcal{D}} \bm{x}, \bm{x} ) - \frac{1}{3}\|\bm{u}_b\|_{D_{bb}}^2 - \frac{3 \alpha^2}{4\zeta^2}\|p\|_{M_p}^2
		 - \frac{1}{3}\|\bm{u}_b\|_{D_{bb}}^2 - \frac{3 \alpha^2}{4}\|p\|_{B_b D_{bb}^{-1} B_b^T}^2 \\
%	=&  (1-\frac{\theta}{2}-\frac{\phi}{2}\frac{1}{\zeta^2})\|\bm{x}_b\|_{D_{bb}}^2
%	+\alpha^2(1-\frac{1}{2\theta})\|p\|_{B_b D_{bb}^{-1} B_b^T}^2
%	+ \left(\left(\frac{\alpha^2}{\zeta^2}+\frac{1}{M}\right) - \frac{\alpha^2}{2\phi} \right) \| p \|^2 \\
%	&+ \|\bm{x}_{l}\|_{\AuE}^2
%	+\| w \|_{\tau M_{\bm w} + \tau^2 c_p A_{w}}^2
%	=&  (1-\frac{2}{6}-\frac{2}{6})\|\bm{x}_b\|_{D_{bb}}^2
%	+\alpha^2(1-\frac{3}{4})\|p\|_{B_b D_{bb}^{-1} B_b^T}^2
%	+ \left(\left(\frac{\alpha^2}{\zeta^2}+\frac{1}{M}\right) - \frac{3 \alpha^2}{4\zeta^2} \right) \| p \|^2 \\
%	&+ \|\bm{x}_{l}\|_{\AuE}^2
%	+\| w \|_{\tau M_{\bm w} + \tau^2 c_p A_{w}}^2
	=&  \frac{1}{3}\|\bm{u}_b\|_{D_{bb}}^2
	+\alpha^2\frac{1}{4}\|p\|_{B_b D_{bb}^{-1} B_b^T}^2
	+ \left(\left(\frac{\alpha^2}{\zeta^2}+\frac{1}{M}\right) - \frac{3 \alpha^2}{4\zeta^2} \right) \| p \|_{M_p}^2 \\
	&+ \|\bm{u}_{l}\|_{\AuE}^2
	+\| \bm{w} \|_{\tau M_{\bm w} + \tau^2 c_p A_{\bm w}}^2,
\end{align*}
where $\AuE = A_{ll} - A_{bl}^T D_{bb}^{-1} A_{bl}$.
%Choosing $\theta = \frac{2}{3}$ and $\phi = \frac{2}{3}\zeta^2$, we get that
%where $\|p\|_{B_b D_{bb}^{-1} B_b^T}^2 : = \left( B_b D_{bb}^{-1} B_b^T p, p \right)$,
%and $\| w \|_{\tau M_{\bm w} + \tau^2 c_p A_{w}}^2 = \left( \tau M_{\bm w} w, w \right) + \left( \tau^2 c_p A_{w} w, w \right)$.
Thus, we get that
\begin{equation} \label{eqn:LDLlower}
( L^{-1} \mathcal{D} L^{-T} \bm{x}, \bm{x} ) \geq \frac{1}{4}(\tilde{\mathcal{D}}\bm{x}, \bm{x}).
\end{equation}

Similarly,
\begin{align*}
(L^{-1} \mathcal{D} L^{-T} \bm{x}, \bm{x}) =& (\tilde{\mathcal{D}} \bm{x}, \bm{x} ) + \alpha(B_b \bm{u}_b, p) + \alpha(B_b^T p, \bm{u}_b) \\
%	=& (\tilde{\mathcal{D}} \bm{x}, \bm{x} ) + \alpha(B_b \bm{x}_b, p) + \alpha(D_{bb}^{-1/2}B_b^T p, D_{bb}^{1/2}\bm{x}_b) \\
	\leq& (\tilde{\mathcal{D}} \bm{x}, \bm{x} ) + \alpha\|B_b\bm{u}_b\|_{M_p^{-1}} \|p\|_{M_p} + \alpha\|\bm{u}_b\|_{D_{bb}} \|p\|_{B_b D_{bb}^{-1} B_b^T} \\
%	\leq& (\tilde{\mathcal{D}} \bm{x}, \bm{x} ) + \frac{\phi}{2}\|B_b\bm{x}_b\|^2 + \frac{\alpha^2}{2\phi}\|p\|^2
%		 + \frac{\theta}{2}\|\bm{x}_b\|_{D_{bb}}^2 + \frac{\alpha^2}{2\theta}\|p\|_{B_b D_{bb}^{-1} B_b^T} \\
%	\leq& (\tilde{\mathcal{D}} \bm{x}, \bm{x} ) + \frac{\phi}{2}\frac{1}{\zeta^2}\|\bm{x}_b\|_{D_{bb}}^2 + \frac{\alpha^2}{2\phi}\|p\|^2
%		 + \frac{\theta}{2}\|\bm{x}_b\|_{D_{bb}}^2 + \frac{\alpha^2}{2\theta}\|p\|_{B_b D_{bb}^{-1} B_b^T} \\
	\leq& (\tilde{\mathcal{D}} \bm{x}, \bm{x} ) + \frac{1}{2}\|\bm{u}_b\|_{D_{bb}}^2 + \frac{\alpha^2}{2\zeta^2}\|p\|_{M_p}^2
		 + \frac{1}{2}\|\bm{u}_b\|_{D_{bb}}^2 + \frac{\alpha^2}{2}\|p\|_{B_b D_{bb}^{-1} B_b^T}^2 \\
%	=&  (1+\frac{\theta}{2}+\frac{\phi}{2}\frac{1}{\zeta^2})\|\bm{x}_b\|_{D_{bb}}^2
%	+\alpha^2(1+\frac{1}{2\theta})\|p\|_{B_b D_{bb}^{-1} B_b^T}^2
%	+ \left(\left(\frac{\alpha^2}{\zeta^2}+\frac{1}{M}\right) + \frac{\alpha^2}{2\phi} \right) \| p \|^2 \\
%	&+ \|\bm{x}_{l}\|_{\AuE}^2
%	+\| w \|_{\tau M_{\bm w} + \tau^2 c_p A_{w}}^2
	=&  2\|\bm{u}_b\|_{D_{bb}}^2
	+\alpha^2(1+\frac{1}{2})\|p\|_{B_b D_{bb}^{-1} B_b^T}^2
	+\left(\left(\frac{\alpha^2}{\zeta^2}+\frac{1}{M}\right) + \frac{\alpha^2}{2\zeta^2} \right) \| p \|_{M_p}^2 \\
	&+ \|\bm{u}_{l}\|_{\AuE}^2
	+\| \bm{w} \|_{\tau M_{\bm w} + \tau^2 c_p A_{\bm w}}^2,
\end{align*}
%Choosing $\theta=1$ and $\phi=\zeta^2$, we get
yielding,
\begin{equation} \label{eqn:LDLupper}
( L^{-1} \mathcal{D} L^{-T} \bm{x}, \bm{x} ) \leq 2(\tilde{\mathcal{D}}\bm{x}, \bm{x}).
\end{equation}
With (\ref{eqn:LDLlower}) and (\ref{eqn:LDLupper}) we have that
$L^{-1} \mathcal{D} L^{-T}$ is spectrally equivalent to $\tilde{\mathcal{D}}$.  For the $\tilde{L}^{-1} \mathcal{D} \tilde{L}^{-T}$ operator, by direct computation and the Cauchy-Schwarz inequality, we have
\begin{align*}
(\tilde{L}^{-1} \mathcal{D} \tilde{L}^{-T} \bm{x}, \bm{x}) =& (\tilde{\mathcal{D}} \bm{x}, \bm{x} ) - \alpha(B_b \bm{u}_b, p) - \alpha(B_b^T p, \bm{u}_b) \\
%	=& (\tilde{\mathcal{D}} \bm{x}, \bm{x} ) - \alpha(B_b \bm{x}_b, p) - \alpha(D_{bb}^{-1/2}B_b^T p, D_{bb}^{1/2}\bm{x}_b) \\
	\geq& (\tilde{\mathcal{D}} \bm{x}, \bm{x} ) - \alpha\|B_b\bm{u}_b\|_{M_p^{-1}} \|p\|_{M_p} - \alpha\|\bm{u}_b\|_{D_{bb}} \|p\|_{B_b D_{bb}^{-1} B_b^T},
\end{align*}
and the rest of the proof for the lower bound follows exactly as it does in the $L^{-1} \mathcal{D} L^{-T}$ case. Similarly for the upper bound, we have
\begin{align*}
(\tilde{L}^{-1} \mathcal{D} \tilde{L}^{-T} \bm{x}, \bm{x}) =& (\tilde{\mathcal{D}} \bm{x}, \bm{x} ) - \alpha(B_b \bm{u}_b, p) - \alpha(B_b^T p, \bm{u}_b) \\
%	=& (\tilde{\mathcal{D}} \bm{x}, \bm{x} ) - \alpha(B_b \bm{x}_b, p) - \alpha(D_{bb}^{-1/2}B_b^T p, D_{bb}^{1/2}\bm{x}_b) \\
	\leq& (\tilde{\mathcal{D}} \bm{x}, \bm{x} ) + \alpha\|B_b\bm{u}_b\|_{M_p^{-1}} \|p\|_{M_p} + \alpha\|\bm{u}_b\|_{D_{bb}} \|p\|_{B_b D_{bb}^{-1} B_b^T},
\end{align*}
and the rest of the proof for the upper bound follows from the $L^{-1} \mathcal{D} L^{-T}$ case.
Thus $L^{-1} \mathcal{D} L^{-T}$ and $\tilde{L}^{-1} \mathcal{D} \tilde{L}^{-T}$ are spectrally equivalent to the
block diagonal matrix $\tilde{\mathcal{D}}$. We then write, $\forall \bm{x}, \bm{y}$,
\begin{equation*}
	\frac{(\mathcal{A}^D\bm{x},\bm{y})}{\|\bm{x}\|_{\mathcal{D}}\|\bm{y}\|_{\mathcal{D}}}
	= \frac{(\mathcal{S}\bm{\xi},\bm{\varphi})}{\|\bm{\xi}\|_{\tilde{L}^{-1} \mathcal{D} \tilde{L}^{-T}}\|\bm{\varphi}\|_{L^{-1}\mathcal{D} L^{-T}}}
	\leq \frac{16(\mathcal{S}\bm{\xi},\bm{\varphi})}{\|\bm{\xi}\|_{\tilde{\mathcal{D}}}\|\bm{\varphi}\|_{\tilde{\mathcal{D}}}}.
\end{equation*}
Since the maps $\tilde{L}^T: \bm{x}\mapsto\bm{\xi}$ and $L^T: \bm{y}\mapsto\bm{\varphi}$ are one-to-one,
\begin{equation*}
	\inf_{\bm{0} \neq \bm{\xi}\in \bm{X}_h} \sup_{\bm{0}\neq \bm{\varphi}\in \bm{X}_h}
	\frac{(\mathcal{S}\bm{\xi},\bm{\varphi})}{\|\bm{\xi}\|_{\tilde{\mathcal{D}}}\|\bm{\varphi}\|_{\tilde{\mathcal{D}}}} \geq \gamma^*,
\end{equation*}
where $\gamma^* = \frac{\tilde{\gamma}}{16}$.

Evoking Proposition~\ref{prop-SVD} (Proposition 3.4.5 in \cite{BoffiBrezziFortin}),
we know that the smallest singular value of
$\tilde{\mathcal{D}}^{-1/2}\mathcal{S}\tilde{\mathcal{D}}^{-1/2}$ is bounded from below by a fixed positive constant.
The matrix,
\begin{equation*}
\tilde{\mathcal{D}}^{-1/2}\mathcal{S}\tilde{\mathcal{D}}^{-1/2} =
\begin{pmatrix} D_{bb}^{-1/2} D_{bb} D_{bb}^{-1/2} & 0 \\ 0 & (\mathcal{D}^E)^{-1/2} \mathcal{A}^E (\mathcal{D}^E)^{-1/2} \end{pmatrix},
\end{equation*}
 is a block diagonal matrix with
$(\mathcal{D}^E)^{-1/2} \mathcal{A}^E (\mathcal{D}^E)^{-1/2} $ as a submatrix on the diagonal.
Then, the smallest singular value of $(\mathcal{D}^E)^{-1/2} \mathcal{A}^E (\mathcal{D}^E)^{-1/2} $ is bounded from below by a fixed positive constant. Therefore, we arrive at equation (\ref{infsup-E}), for $\bm{x}^E = (\bm{u}_l, p_h, \bm{w}_h)^T \in \bm{X}^E_h$ and $\bm{y}^E = (\bm{v}_l, q_h, \bm{r}_h)^T \in \bm{X}^E_h$,
\begin{equation*}
	\inf_{\bm{0}\neq \bm{x}^E \in \bm{X}_h^E} \sup_{\bm{0}\neq \bm{y}^E\in \bm{X}_h^E}
	\frac{(\mathcal{A}^E \bm{x}^E, \bm{y}^E)}{\|\bm{x}^E\|_{\mathcal{D}^E}\|\bm{y}^E\|_{\mathcal{D}^E}} \geq \gamma^*.
\end{equation*}

The upper bound follows from the following set of inequalities,
\begin{align*}
	&\sup_{\bm{0}\neq \bm{x}\in \bm{X}_h}\sup_{\bm{0} \neq \bm{y} \in \bm{X}_h }
		\frac{(\mathcal{A}^D\bm{x},\bm{y})}{\|\bm{x}\|_{\mathcal{D}}\|\bm{y}\|_{\mathcal{D}}}
	\\
	\geq&  \sup_{\bm{0} \neq \bm{\xi}\in \bm{X}_h}\sup_{\bm{0}\neq \bm{\varphi} \in \bm{X}_h }
		\frac{(\mathcal{S}\bm{\xi},\bm{\varphi})}{4 \|\bm{\xi}\|_{\tilde{\mathcal{D}}}\|\bm{\varphi}\|_{\tilde{\mathcal{D}}}}\\
	\geq & \sup_{ \bm{0}\neq \bm{x}^E \in  \bm{X}_h^E }\sup_{ \bm{0}\neq \bm{y}^E \in \bm{X}_h^E}
	\frac{
	\left(	\begin{pmatrix}
			D_{bb}	&	0	\\
			0		&	\mathcal{A}^E
		\end{pmatrix} \begin{bmatrix} 0 \\ \bm{x}^E \end{bmatrix}, \begin{bmatrix} 0 \\ \bm{y}^E \end{bmatrix}\right)
	}{
		4 \left\|\begin{bmatrix} 0 \\ \bm{x}^E \end{bmatrix}\right\|_{\tilde{\mathcal{D}}}
		\left\|\begin{bmatrix} 0 \\ \bm{y}^E \end{bmatrix}\right\|_{\tilde{\mathcal{D}}}
	}\\
	= & \sup_{ \bm{0}\neq \bm{x}^E\in \bm{X}_h^E }\sup_{\bm{0}\neq \bm{y}^E \in \bm{X}_h^E}
		\frac{(\mathcal{A}^E \bm{x}^E, \bm{y}^E)}{ 4\|\bm{x}^E\|_{\mathcal{D}^E}\|\bm{y}^E\|_{\mathcal{D}^E}},
\end{align*}
which results in (\ref{continuity-E}).
\end{proof}

\bibliographystyle{siamplain}
\bibliography{references}

\end{document}